\newtheorem{thm}{Theorem}
\newtheorem{prop}[thm]{Proposition}
\newtheorem{pdef}[thm]{Proposition-Definition}
\newtheorem{lem}[thm]{Lemma}
\newtheoremstyle{bidule}
{3pt}
{3pt}
{}
{}
{\scshape}
{.}
{.5em}
{}
\newtheorem{df}[thm]{Definition}
\theoremstyle{definition}
\newtheorem{rmk}{Remark}
\newtheorem{ex}[thm]{Example}
\newtheorem*{var}{Variant}
\newtheorem*{term}{Terminology}
\newtheorem*{note}{Note}
\newtheorem*{warn}{Warning}
\newtheorem*{nota}{Notations}
\newcommand{\E}{\mathscr{E}}
\newcommand{\C}{\mathscr{C}}
\newcommand{\Ccal}{\mathcal{C}}
\renewcommand{\O}{\mathscr{O}}
\newcommand{\D}{\mathscr{D}}
\newcommand{\A}{\mathscr{A}}
\newcommand{\B}{\mathscr{B}}
\newcommand{\M}{\mathscr{M}}
\newcommand{\X}{\mathscr{X}}
\renewcommand{\S}{\mathcal{S}}
\newcommand{\T}{\mathcal{T}}
\newcommand{\W}{\mathscr{W}}
\newcommand{\F}{\mathscr{F}}
\newcommand{\Nv}{\mathscr{N}}
\renewcommand{\le}{\mathscr{L}}
\newcommand{\G}{\mathscr{G}}
\renewcommand{\L}{\mathcal{L}}
\newcommand{\Zcal}{\mathcal{Z}}
\newcommand{\Z}{\mathbb{Z}}
\newcommand{\R}{\mathbb{R}}
\newcommand{\Cx}{\mathbb{C}}
\renewcommand{\P}{\mathscr{P}}
\newcommand{\Y}{\mathscr{Y}}
\newcommand{\bR}{\mathbf{R}}
\newcommand{\Pcal}{\mathcal{P}}
\renewcommand{\to}{\longrightarrow}
\newcommand{\ol}{\overline}
\newcommand{\ul}{\underline}
\renewcommand{\o}{\textbf{O}}
\renewcommand{\bf}{\mathbf}
\newcommand{\U}{\mathbb{U}}
\newcommand{\N}{\mathcal{N}}
\renewcommand{\Vec}{\mathbf{Vect}}
\renewcommand{\it}{\textit}
\newcommand{\Fr}{\textbf{Free}}
\newcommand{\tx}{\text}
\theoremstyle{bidule}
\newtheorem{obs}{\scshape Observations}
\renewcommand{\to}{\longrightarrow}
\DeclareMathOperator\Id{Id}
\DeclareMathOperator\Hom{Hom}
\title{Segal Enriched categories I}
\author{Hugo V. Bacard}
\begin{document}
\maketitle
\begin{abstract}
We develop a theory of enriched categories over a (higher) category $\M$ equipped with a class $\W$ of morphisms called \emph{homotopy equivalences}. We call them Segal $\M_{\W}$-categories. Our motivation was to generalize the notion of ``up-to-homotopy monoids'' in a monoidal category $\M$, introduced by Leinster. The formalism adopted generalizes the classical Segal categories and extends the theory of enriched category over a bicategory. In particular we have a linear version of Segal categories which did not exist so far. 
\end{abstract}

\setcounter{tocdepth}{1}
\tableofcontents 
\section{Introduction}
Let $\M=(\ul{\tx{M}},\otimes, I)$ be a monoidal category.
An \emph{enriched} category $\C$ over $\M$, shortly called\\ `an $\M$-category', consists roughly speaking of :
\begin{itemize}
\item objects $A,\ B, \ C,\cdots$
\item \emph{hom-objects} $\C(A,B)$  $\in Ob(\ul{\tx{M}})$,
\item a unit map $I_A :I \to \C(A,A)$ for each object $A$,
\item a composition law : $c_{ABC} : \C(B,C) \otimes \C(A,B) \to \C(A,C)$, for each triple of objects $(A,B,C)$,  
\end{itemize}
satisfying the obvious axioms, associativity and  identity, suitably adapted to the situation.\ \\ 
Taking $\M$ equal to  $\bf{(Set,\times), (Ab,\otimes_{\Z}), (Top,\times), (Cat,\times)}$,... ,  an $\M$-category  is, respectively, an ordinary\footnote{By ordinary category we mean small (or locally small) category} category, a \emph{pre-additive} category, a \emph{pre-topological} category, a $2$-category, etc. The category $\M$ is called the \emph{base} as ``base of enrichment''. 

Just like for $\bf{Set}$-categories, we have a notion of $\M$-functor, $\M$-natural transformation, etc.
The reader can find an exposition of the theory of enriched categories over a monoidal category in the book of Kelly \cite{Ke}.  
For a base $\M$, we commonly denote by $\M$-Cat the category of $\M$-categories.\ \\

Bénabou defined \emph{bicategories}, and morphisms between them (see \cite{Ben2}). He pointed out that a bicategory with one object was the same thing as a monoidal category. This gave rise to a general theory of enriched categories where the base $\M$  is a  bicategory. We refer the reader to \cite{KLSS}, \cite{Str} and references therein for \emph{enrichment} over a bicategory.\

Street noticed  in \cite{Str} that for a  set $X$, an $X$-\emph{polyad} \footnote{Bénabou called \emph{polyad} the `many objects' case of \emph{monad}. `$X$-polyad' means here ``polyad associated to $X$''} of Bénabou in a bicategory $\M$ was the same thing as a category enriched over $\M$ whose set of objects is $X$. Here an $X$-polyad means a \emph{lax} morphism of bicategories from $\ol{X}$ to  $\M$, where $\ol{X}$ is the \emph{coarse}\footnote{Some authors call it the `chaotic' or `indiscrete' category associated to $X$} category associated to $X$. 
Then given a polyad $F : \ol{X} \to \M$, if we denote by $\M_{F}^{X}$ the corresponding $\M$-category, one can interpret $F$ as \emph{the nerve} of $\M_{F}^{X}$ and identify $F$ with $\M_{F}^{X}$, like for Segal categories.\ \\

Recall that a Segal category is a \emph{simplicial object} of a cartesian monoidal category $\M$, satisfying the so called \emph{Segal conditions}. The theory of Segal categories has its roots in the paper of Segal \cite{Seg1} in which he proposed a solution of the \emph{delooping problem}. The general theory starts with the works of  Dwyer-Kan-Smith \cite{DKS} and Schw{\"a}nzl-Vogt \cite{Vogt_Sch}. The major development of Segal $n$-categories was given by Hirschowitz and Simpson \cite{HS}.\\

Hirschowitz and Simpson used the same philosophy as Tamsamani \cite{Tam} and Dunn \cite{Du}, who in turn followed the ideas of Segal \cite{Seg1}. A Segal $n$-category is defined by its nerve which is an $\M$-valued functor satisfying the suitable Segal conditions. The target category $\M$ needs to have a class of maps  called \emph{weak} or \emph{homotopy} equivalences. Moreover they required  the presence of \emph{discrete objects} in $\M$ which will play the role of `set of objects'. We can interpret their approach as an enrichment over $\M$, even though it sounds better to  say ``internal weak-category-object of $\M$''. The same approach was used by Pellissier \cite{Pel}.\ \\

Independantly Rezk \cite{Rezk_mh} followed also the ideas of Segal to define \emph{Complete Segal spaces} as weakly enriched categories over $\bf{(Top,\times)}$ and $\bf{(SSet,\times)}$. We refer the reader to the paper of Bergner \cite{Bergner_inf} for an exposition of the interactions between Segal categories, Complete Segal spaces, quasicategories, $(\infty,1)$-categories, etc.  \ \\  

To avoid the use of discrete objects, Lurie \cite{Lurie_HTT} introduced a useful tool $\Delta_X$ which is a `general copy' of the usual\footnote{By usual `$\Delta$' we mean the topological one which doesn't contain the empty set} category of simplices $\Delta$. Simpson \cite{Simpson_HTHC} used this $\Delta_X$ to define Segal categories as a ``proper'' enrichment over $\M$. Here by ``proper'' we simply mean that the set $X$ which will be the set of objects is taken `outside' $\M$.\ \\ 

In the present paper we give a definition of enrichment over a bicategory in a `Segal way'. Our definition generalizes the Hirschowitz-Simpson-Tamsamani approach as well as Lurie's. As one can expect the `strict Segal case' will give the classical enrichment, which corresponds to the polyads of Bénabou. Our construction is deeply inspired from the definition of up-to-homotopy monoids given by Leinster \cite{Lei3}. The main tools in this paper are Bénabou's bicategories and the the different type of morphisms between them. \ \\

Our motivation was to `put many objects' in the definition of Leinster. The idea consists to identify monoids and one-object enriched categories. The `many-objects' case provides, among other things, a Segal version of enriched categories over noncartesian monoidal categories, e.g, $(\bf{ChVect,\otimes_k,k})$ the category of complex of vector spaces over a field $\bf{k}$.\ \\

Beyond the fact that enrichment over bicategories generalizes the classical theory of enriched categories, it gives rise to various  points of view  in many classical situations.
Walters \cite{Wa2} showed for example that a sheaf on a Grothendieck site $\Ccal$ was the same thing as \emph{Cauchy-complete} enriched category over a bicategory $\text{Rel}(\Ccal)$ build from $\Ccal$. Later Street \cite{Str} extended this result to  
describe \emph{stacks} as enriched categories with extra properties and gave an application to nonabelian cohomology.\ \\

Both Street and Walters used the notion of \emph{bimodule} (also called \emph{distributor, profunctor or module}) between enriched categories. The notion of \emph{Cauchy completeness} introduced by Lawvere \cite{Law} plays a central role in their respective work. In fact `Cauchy completeness' is a property of \emph{representability} and is used there to have the restriction of sections and to express the \emph{descent conditions}.\\
\indent This characterization of stacks as enriched categories is close to the definition of a stack as fibered category satisfying the descent conditions. One can  obviously adapt their result with the formalism we develop here. We can consider a Segal version of their results using the notion of \emph{Segal site} of Toën-Vezzosi \cite{ToVe1}.\ \\ 

By the Giraud characterization theorem  \cite{Gir} we know that a sheaf is an object of a Grothendieck topos. Then the results of Walters and Street say that a Grothendieck (higher) topos is a equivalent to a subcategory of $\M$-Cat for a suitable base $\M$. A \emph{Segal topos} of Toën-Vezzosi should be a subcategory of the category of Segal-enriched categories over a base $\M$. Street \cite{Str2} has already provided a characterization theorem of the bicategory of stacks on a site $\Ccal$, then a \emph{bitopos}. Here again one may propose a characterization theorem for Segal topoi of Toën-Vezzosi by suitably adapting the results of Street. \ \\

More generally we can extend the ideas of Jardine \cite{Jardine_simpre}, Thomason \cite{Thomason_ktheorie} followed by, Dugger \cite{Dugger} , Hirschowitz-Simpson \cite{HS}, Morel-Voevodsky \cite{Mo_Vo_A1} , Toën-Vezzosi \cite{HAG1,HAG2} and others, who develloped a homotopy theory in situations, e.g in algebraic geometry, where the notion of homotopy was not \emph{natural}. The main ingredients in these theories are essentially the use of \emph{simplicial presheaves} with their (higher) generalizations, and the \emph{functor of points} initiated by Grothendieck.

Enriched categories appear naturally there  because, for example, the category of simplicial presheaves is a \emph{simplicially enriched} category i.e an $\bf{SSet}$-category. An interesting task will be, for example to `linearize' the work of Toën-Vezzosi and develop a Morita theory in `Segal settings'. This will be discussed in a future work. \ \\

Our goal in this paper is to present the theory of Segal enriched categories and provide situations where they appear naturally. Applications are reserved for the future.\ \\

In the remainder of this introduction we're going to give a brief outline of the content of this paper.
\subsection*{\Large{Finding a `big' $\Delta$}} \ \\
\indent We start by introducing the new tool which generalizes the monoidal category $(\Delta,+,0)$. The reason of this approach is the fact that this  category $(\Delta,+,0)$ is known to contain the universal monoid which is the object $1$. More precisely, Mac Lane \cite{Mac} showed that a monoid $V$ in a monoidal category $\M$ can be obtained as the image of $1$ by a \textbf{monoidal functor} $\Nv(V): (\Delta,+,0) \to \M$. And as mentionned previously a monoid is viewed as an $\M$-category with one object, so we can consider the functor $\Nv(V)$ as the \textbf{nerve} of the \footnote{the category is unique up to isomorphism.} corresponding category whose hom-object is $V$. \ \\

From this observation it becomes natural to find a big tool which will be used to `depict' many monoids and bimodules in $\M$ to form a general $\M$-category. This led us to the following notion (see Proposition-Definition \ref{path-bicat}).\\
\\
\textbf{Proposition-Definition} [The $2$-path-category]\\
\indent Let $\C$ be a small category.
\renewcommand\labelenumi{\roman{enumi})}
\begin{enumerate}
\item There exists a strict $2$-category $\P_{\C}$ having the following properties:
\begin{itemize}
\item[$\ast$]the objects of $\P_{\C}$ are the objects of $\C$,
\item[$\ast$]for every pair $(A,B)$ of objects, a $1$-morphism from $A$ to $B$ is of the form $[n,s]$, where $s$ is a finite chain of composable morphisms, from $A$ to $B$, and $n$ is \textbf{the length of $s$}. 
\item[$\ast$]a $2$-morphism from $[n,s]$ to $[m,t]$ is given by compositions of composable morphisms or adding identities. It follows that $\P_{\C}(A,B)$ is a \textbf{posetal category}. 
\item[$\ast$] the composition in $\P_{\C}$ is given by the \textbf{concatenation of chains}.
\item[$\ast$] When $A=B$ there is a unique $1$-morphism of lenght $0$, $[0,A]$ which is identified with $A$. Moreover $[0,A]$ is the identity morphism of $A$.   
\end{itemize}
\item if $\C \cong \bf{1}$, say $ob(\C)=\{\o\}$ and $\C(\o,\o)=\{\Id_{\o}\}$, we have a \underline{monoidal} isomorphism  :
$$(\P_{\C}(\o,\o),c(\o,\o,\o),[0,\o]) \xrightarrow{\sim} (\Delta,+,0) $$
where $c(\o,\o,\o)$ is the composition functor
\item the operation $\C \mapsto \P_{\C}$ is functorial in $\C$.
\end{enumerate}
\ \\
Similar constructions have been considered by Dawson, Paré and Pronk for double categories (see \cite{DPP-path}). One can compare the \emph{Example 1.2 and Remark 1.3} of their paper with the fact that here we have: $\P_{\bf{1}}$ `is' $(\Delta,+,0)$.\\ 
\ \\
\indent As mentionned above the idea of enrichment will be to consider special types of morphisms from $\P_{\C}$ to other bicategories. We will see that when $\C$ is $\ol{X}$, $\P_{\ol{X}}$ will replace Lurie's $\Delta_{X}$ and will be used to define Segal enriched categories. This will generalize the definition of up-to-homotopy monoid in the sens of Leinster which may be called up-to-homotopy \emph{monad} in the langage of bicategories.\ \\

One of the good properties of $\P_{\C}$ is the fact that any functor of source $\C$ can be lifted to a \textbf{free} $2$-functor of source $\P_{\C}$ (see Observations \ref{obsPC1}). This process takes classical $1$-functors to  enrichment situations and gives them new interpretations. 

The fact that $\C$ is an arbitrary small category allows us to consider geometric situations when $\C$ is a Grothendieck site and in this way we can `transport' geometry in enriched category context.\\


\subsection*{\Large{The environment}} \ \\
\indent Before giving the definition of enrichment, we describe the type of category $\M$ which will contain the hom-objects $\C(A,B)$ (see Definition \ref{base_enrich}). \\

\indent We will work with a bicategory $\M$ equipped with a class $\W$ of $2$-cells satisfying the following properties.
\begin{enumerate}
\item Every invertible $2$-cell of $\M$ is in $\W$, in particular $2$-identities are in $\W$,
\item $\W$ is stable by horizontal composition,
\item $\W$ has the vertical `$3$ out of $2$' property. 
\end{enumerate}
Such a pair $(\M,\W)$ will be called \emph{base} as `base of enrichment'.
When $\M$ has one object, therefore a monoidal category, we get the same environment given by Leinster \cite{Lei3}.\\

Since we work with bicategories, $\M$ can also be :
\begin{enumerate}
\item[$\ast$] any $1$-category viewed as a bicategory with all the $2$-morphisms being identities,
\item[$\ast$] the `$2$-level part' of an $\infty$-category.
\end{enumerate}

\begin{note}
To define Segal enriched categories, $\W$ will be a class of $2$-morphisms called \textbf{homotopy $2$-equivalences}. In this case, following the terminology of Dwyer, Hirschhorn, Kan and Smith \cite{DHKS} one may call $\M$ together with $\W$ `a homotopical bicategory'.
\end{note} 
\vspace*{0.1cm}
\subsection*{\Large{Relative enrichment}}\ \\

With the previous materials we give the definition of \textbf{relative enrichment} in term of \textbf{path-objects} (\textsl{Definition} \ref{p-object}). One can compare the following definition with \textsl{Definition}  \ref{h-mon}.   \ \\
\\
\textbf{Definition.}{[Path-object]} \\
\indent Let $(\M, \W)$ be a base of enrichment.  A \emph{path-object} of $(\M,\W)$ is a couple $(\C,F)$, where $\C$ is a small category and $F=(F,\varphi)$ a \textbf{colax morphism} of Bénabou:
$$F : \P_{\C} \longrightarrow \M $$
such that for any objects $A$, $B$, $C$ of $\C$ and  any pair $(t,s)$ in $\P_{\C}(B,C) \times \P_{\C}(A,B)$, all  the $2$-cells 
$$F_{AC}(t \otimes s) \xrightarrow{\varphi(A,B,C)(t,s)} F_{BC}(t) \otimes F_{AB}(s) $$ 
$$ F_{AA}([0,A]) \xrightarrow{\varphi_{A}} I'_{FA}$$
\vspace{0.5cm}
\textbf{are in} $\W$. Such a colax morphism will be called a \textbf{$\W$-colax morphism} and $\varphi(A,B,C)$ will be called \textbf{`colaxity maps'}.\\
\begin{itemize}
\item When $\W$ is a class of homotopy $2$-equivalences, then $(\C,F)$ will be called \textbf{a Segal path-object} of $\M$ and 
$F : \P_{\C} \longrightarrow \M $ will give a relative enrichment of $\C$ over $(\M, \W)$. 
\item We will say for short that $(\C,F)$ is a \textbf{`$\C$-point'}  or a \textbf{`$\C$-module'} of $\M$. In \cite{SEC2} a duality theory of enrichment is developped and we will prefer the terminology $\C$-module.
\item When $\C= \ol{X}$ a Segal $\ol{X}$-point of  $(\M, \W)$ is called a Segal  $\M_{\W}$-category.
\end{itemize}
\ \\
\indent The reason we consider \textbf{colax} morphisms, is the fact that `colax' is the appropriate replacement of \textbf{simplicial} (see \textsl{Proposition} \ref{leiprop}) when working with general monoidal categories, in particular for noncartesian monoidal ones. In fact when $\M$ is monoidal for the cartesian product, the \emph{colaxity maps} will give \emph{all} the \emph{face maps} and we can consider a path-object of $(\M,\W)$ as a `super-simplicial object' of $(\M,\W)$ `colored by' $\C$. In the Segal case we may consider $F$ as a `$\C$-homotopy coherent nerve'
\ \\

But there are also other interpretations that arise when we consider special bases $(\M,\W)$. Some times in the Segal case we can consider $F$ as a \emph{homotopic $\M$-representation} of $\C$ (see \ref{nabcoh}). 

One of the advantages of having enrichment as morphism is the fact that classical operations such as \emph{base change} will follow immediately. In addition to that, we can use the notions of \textbf{transformations} and \textbf{modifications} to have a first categorical structure of the `moduli space' of relative enrichments of $\C$ over $\M$. This is discussed in \textsl{sections} \ref{morph} and \ref{b-change}.
\subsection*{\Large{Examples}}\ \\

In \textsl{section} \ref{examples} we show that the formalism we've adopted covers the following situations.\ \\
 
\paragraph*{Category theory}
\begin{itemize}
\item Up-to-homotopy monoid in the sens of Leinster  (\textsl{Proposition} \ref{hotmon}). 
\item Simplicial object (\textsl{Proposition} \ref{hotmon}).
\item Classical enriched categories (\textsl{Proposition} \ref{c-enr}) and the general case of enrichment over a bicategory (\textsl{Proposition} \ref{bicat-enr}).
\item Segal categories in the sens of Hirschowitz-Simpson (\textsl{Proposition} \ref{seg-ncat}).
\item Linear Segal categories are defined in Definition \ref{DG-Seg}
\end{itemize}
\paragraph*{Nonabelian cohomology}
 \begin{itemize}
 \item In \ref{g-cat} we remarked that for a group $G$ in $(\bf{Set}, \times)$, a $G$-torsors e.g $EG$, is the same thing as a ``full'' $G$-category. The cocyclicity property of torsors reflect a `degenerated' composition i.e the composition maps $c_{ABC}$ are identities.  We recover the classification role of $BG$ because to define a $G$-category we take a path-object of $BG$.
 
This remark can be extended to the general case of a group-object using the functor of points. The Segal version of this situation will lead to deformation theory and derived geometry. \ \\

\item For a nonempty set $X$, the coarse category $\ol{X}$ is the `EG' of some group G (see Remark \ref{rmk-coar}). And as we shall see we will take $\ol{X}$-point of $(\M,\W)$ to define enriched categories. When $\M$ is a $1$-category e.g $\Vec$ the category of vector space, an EG-point of $\Vec$  will give in some case a representation of G (all elements of G, which correspond to the objects of EG, are sent to the same object of $\Vec$). \ \\

 \item In Example \ref{exp-bc} we've considered the exponential exact sequence as a base change. Moreover we remarked that given a function $f: X \to (K,d)$, from some `space' $X$ to  a metric  (or normed) space $K$  e.g  $\R, \Cx$, a DVR \footnote{DVR :Discrete Valuation Ring} $L$, the pullback of the metric by $f$ gives a metric on $X$. And if $X$ has a topology then the metric space considered as an $\R$-category will have an \emph{atlas} of $\R$-categories. This is an example of iterative process of enrichment \emph{`à la'} Simpson-Tamsamani, because since Lawvere \cite{Law} it's well known that metric spaces are enriched categories over $(\ol{\R}_+,+,0, \geq)$. \ \\
 \item In \ref{p-transp} we give an example of $1$-functor considered as a (free) path-object. We want to consider a \emph{parallel transport functor} as a path-object. In the Segal case we will have \emph{homotopic holonomy}.\ \\
 \item Finally we've introduced some material for the future with the notion of \emph{quasi-presheaf} (see \ref{q-prsh}).  We define a \emph{quasi-presheaf} on $\C$ to be a Segal $\C^{op}$-point of $(\M,\W)$. This is not simply a `generalization-nonsense' of classical presheaves. We want to consider, for example, the Grothendieck anti-equivalence between affine schemes and commutative rings as a `co-enrichment' having a \emph{reconstruction property}, then a \emph{good} enrichment (see Example \ref{aff-alg}). A detailed account will appear in \cite{SEC2}.\ \\
 
Another example is to consider any cohomology theory on $\C$ as a family of `free' $\C^{op}$-points i.e relative enrichments of $\C^{op}$. We hope that using the \emph{machine} of `Segal enrichment': base changes of $\C^{op}$-points, enriched Kan extension, Segal categories etc, together with model categories, we can understand some facets of \emph{motivic cohomology}.

These considerations will require an appropriate \emph{descent theory} of relative enrichment which will be discussed in \cite{SEC2}. 
 \end{itemize}

\subsection*{\Large{Morphisms, Bimodules and Reduction}}\ \\

In section \ref{morph} we've revisited some classical notions adapted to our formalism. We've tried as much as possible to express these notions in term of morphisms of bicategories. The idea is to have everything `at once' using path-objects. \\
\begin{enumerate}
\item Given two path-objects $F:\P_{\C} \to \M$ and $G: \P_{\D} \to \M$ , we define first an $\M$-premorphism to be a couple $\Sigma=(\Sigma,\sigma)$ consisting of a functor $\Sigma : \C \to \D$ together with a transformation of (colax) morphisms of bicategories
$\sigma :  F  \longrightarrow G \circ \P_{\Sigma}$ (Definition \ref{premor}). An $\M$-morphism is a special type of an $\M$-premorphism.\  \\
\item We define bimodules (also called ``distributors'', ``profunctors'' or ``modules'') in term of path-object (Definition \ref{bimod}).\ \\
\item Finally in Proposition \ref{reduc}, we introduce a bicategory $\W^{-1}\M$ which is rougly speaking the `secondary' Gabriel-Zisman localization of a base $(\M,\W)$ with respect to $\W$. With this bicategory $\W^{-1}\M$ we can \emph{reduce} any Segal point to it's homotopic part.
\end{enumerate}
\vspace*{1cm}

\section*{Acknowledgments}
I would like to warmly thank and express my gratitude to my supervisor, Carlos Simpson, whose encouragement, guidance and support have enabled me to develop an understanding of the subject. I would like to extend my thanks to Julia Bergner and Tom Leinster who have suggested the question of finding a `Segal-like definition' of enriched categories. 

Many thanks to the staff of the Laboratoire J-A Dieudonné who provided me an excellent working environment.
\section*{Yoga of enrichment} \label{yoga}
\begin{quotation}
`` In mathematics, there are not only theorems. There are, what we call, `philosophies' or `yogas', which remain vague. Sometimes we can guess the flavor of what should be true but cannot make a precise statement. When I want to understand a problem, I first need to have a panorama of what is around it. A philosophy creates a panorama where you can put things in place and understand that if you do something here, you can make progress somewhere else. That is how things begin to fit together. '' \\ 
\begin{flushright}
Pierre Deligne, in \emph{Mathematicians}, Mariana Cook, PUP, 2009, p156.
\end{flushright}
\end{quotation}
\vspace*{1cm}
\begin{center}
\includegraphics[width=11cm,height=8cm]{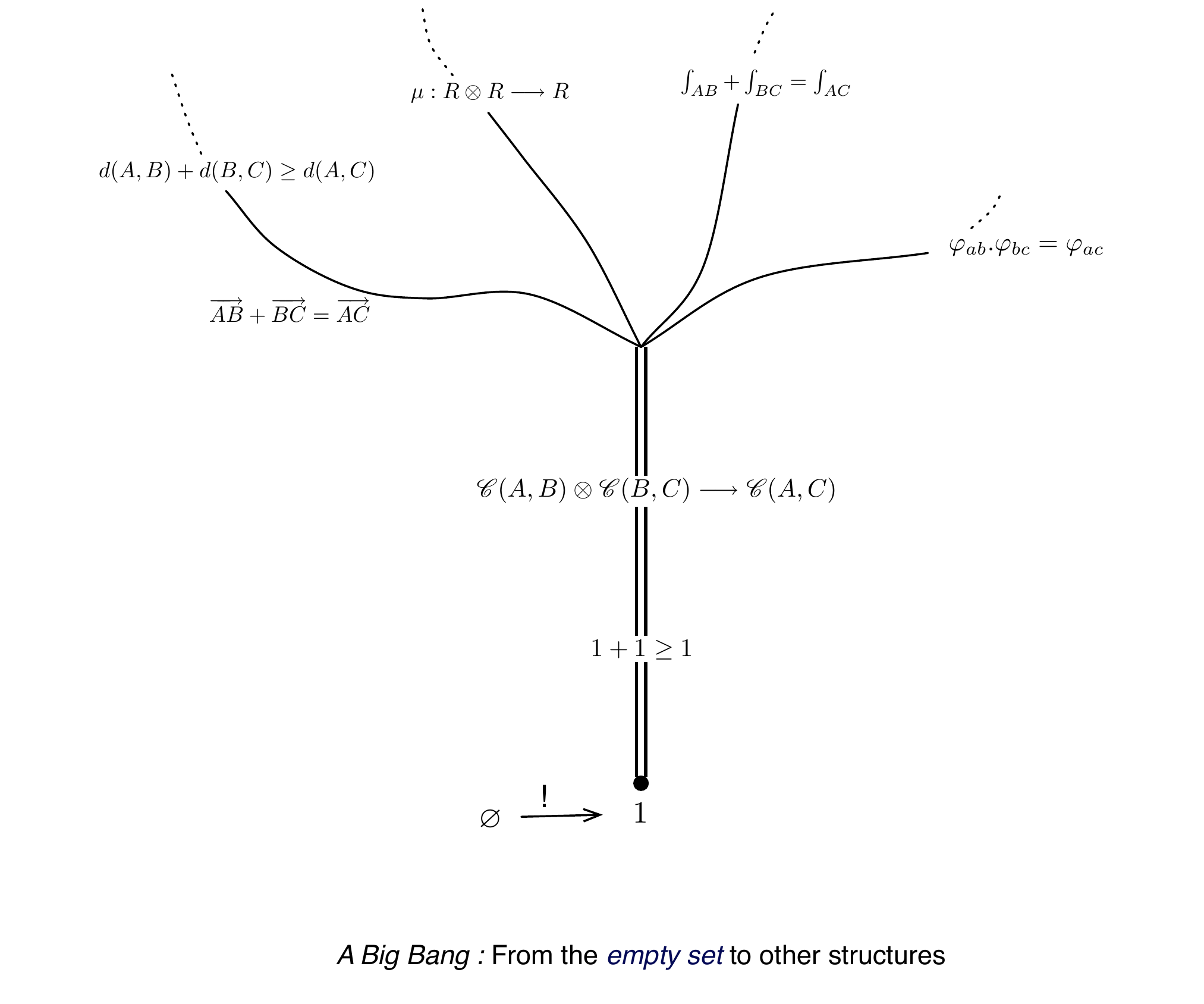}
\end{center}
\vspace*{1cm}
\ \
\indent In the following we present some facets of enrichment and give some interpretations \footnote{or philosophies}  around it. We hope that it will help a reader who is nonfamiliar to Segal categories to have a panoramic understanding of the subject.

\subsection*{The general picture} \ \\

\indent In a classical category $\C$, we have :
\begin{itemize}
\item compositions  $c_{ABC} : \C(B,C) \otimes \C(A,B) \to \C(A,C)$ : \indent thought as a \emph{partial multiplications}\\ 
\item an identity map $I_A :I \to \C(A,A)$ : \indent think $[\C(A,A), I_A]$ as a \emph{pointed space with multiplication} e.g $\pi_1(x,X)$,  $\Omega_{x}X$, etc.\ \\
\end{itemize}

In a Segal category \textbf{there is no prescription}, in general, of the previous data but we have the following diagrams. 
\renewcommand\labelenumi{\Roman{enumi})}
\begin{enumerate}
\item
\[
\xy
(-10,0)*+{\C(B,C) \otimes \C(A,B)}="X";
(50,0)*+{\C(A,C)}="Y";
(22,22)*+{\C(A,B,C)}="Z";
{\ar@{->}_{\tx{weak equiv.} }"Z"; "X"};
{\ar@{->}^{\tx{canonical}}"Z"; "Y"};
\endxy
\]

\item 
\[
\xy
(-10,0)*+{I}="X";
(50,0)*+{\C(A,A)}="Y";
(22,22)*+{\C_{[0,A]}}="Z";
{\ar@{->}_{\tx{weak equiv.}}"Z"; "X"};
{\ar@{->}^{\tx{canonical}}"Z"; "Y"};
\endxy
\]
\end{enumerate}

The two type of maps 
$$\C(A,B,C) \to \C(B,C) \otimes \C(A,B) \ \ \tx{and} \ \  \C_{[0,A]} \to I$$
are called `Segal maps' and they are required to be \emph{weak equivalences}.

The idea is that when these maps are isomorphisms (strong equivalences) then using their respective inverse we can run these diagrams from the left to the right and we will have the data of a classical category. \\

But when the Segal maps are not isomorphisms but only weak equivalences then we can think that each weak inverve of the previous maps will give a `quasi-composition' and a `quasi-identity map'. It turns out that Segal categories are more general than classical categories and appear to be a good tool for homotopy theory purposes. 
\begin{note}
In this paper the Segal maps will be the `colaxity maps'. 
\end{note}

\subsection*{Why \emph{relative} enrichment ?}\ \\

For a given small category $\C$ and a bicategory $\M$ we define a relative enrichment of $\C$ over $\M$ to be a morphism of bicategories $F :\P_{\C} \to \M$ satisfying some extra conditions which can be interpreted as `generalized Segal conditions'.\ \\

To understand the meaning of `relative' it suffices to consider the trivial case where $\M$ is a $1$-category viewed as a bicategory with identity $2$-morphisms. In this case the morphism $F$ is determined by  a $1$-functor  $F_{|\C}: \C \to \M$ (see Observations \ref{obsPC1}).\ \\

And the idea is to observe that given any functor $G: \C \to \M$ then we can form the category $G[\C]$ described as follows.

\begin{itemize}
\item $Ob(G[\C])= Ob(\C)$\\
\item For each pair of objects $(A,B)$ we take the morphism to be the image \footnote{As $G$ may not be faithful we need to take the \emph{reduced} image to have a set} of the function $$G_{AB}:\C(A,B) \to \M(GA,GB)$$
\item The composition is defined in the obvious way. 
\end{itemize} 

In this situation we will consider $G[\C]$ as a relative enrichment of $\C$ over $\M$. One can interpret $G[\C]$ as \emph{a copy of $\C$ of type $\M$}.\ \\

As usual we have the following philosophical questions.
\begin{itemize}
\item What is the `best copy' of $\C$ of type $\M$ ? 
\item Does such a `motivic copy' of $\C$ exist for a given $\M$ ?
\item Which $\M$ shall we consider to have many informations about $\C$ ?\\
\end{itemize} 
The \emph{machine} of enriched categories allows us to do base changes i.e move $\M$, and we hope that the ideas of \emph{Segal-like enrichment} can guide us, using homotopy theory, to find an aswer of those questions.\ \\

In the previous example we can see that we have the usual factorization of the functor $G$ as 
$$ \C \xrightarrow{j} G[\C] \xrightarrow{i} \M$$ where $j$ is full and $i$ is faithfull.

If the functor is an equivalence then we may say that $G[\C]$ is a `good copy' of $\C$ of type $\M$ and if $G$ reflects isomorphisms then $G[\C]$ will provide a good copy of some subcategory of the \emph{interior of $\C$}, etc.\ \\

We can also form a category whose set of objects is the image of the function $G: Ob(\C) \to Ob(\M)$ . The morphism are those `colored' or coming from $\C$. In this way we will form a category which \emph{lives} in $\M$. 

With this point of view, we can also consider that enrichment over $\M$ is a process which \emph{enlarges} $\M$. In fact it's well known that some properties of $\M$ are \emph{transfered} to $\M$-Cat and $\M$-Dist.\ \\

The recent work of Lurie \cite{Lurie_tqft} on cobordism hypothesis (framed version) says roughly speaking that a \emph{copy of  $\bf{Bord}_{n}^{\tx{\emph{fr}}}$} which respects the monoidal structure and the symmetry i.e a symmetric monoidal functor, is determined by the copy of the point which must be a \emph{fully dualizable object}. This reflect the fact that $\bf{Bord}_{n}^{\tx{fr}}$ is in some sense \emph{built} from  the point using bordisms and disjoint unions.\ \\  
 
\begin{rmk}
To have the classical theory of enriched categories we will consider the case where $\C$ is of the form $\ol{X}$ (see \ref{coarse}) for a nonempty set $X$ and $\M$ a bicategory with one object, hence a monoidal category.
\end{rmk}

\subsection*{A `Big Bang'}\ \\

\begin{quotation}
`` Will mathematics merely become more sophisticated and specialized, or
will we find ways to drastically simplify and unify the subject? Will we only
build on existing foundations, or will we also reexamine basic concepts and
seek new starting-points? Surely there is no shortage of complicated and
interesting things to do in mathematics. But it makes sense to spend at least
a little time going back and thinking about \emph{simple} things.''\\
\begin{flushright}
John C. Baez, James Dolan, \emph{From Finite Sets to Feynman Diagrams} \cite{Baez-Dolan_Fey}  p2.
\end{flushright}
\end{quotation}
\vspace*{1cm}

If we look closely the composition `$\C(A,B) \otimes \C(B,C) \to \C(A,C)$' in any category, we can see the similarity with the other classical formulas such as:\\

\begin{itemize}
\item $\overrightarrow{AB} + \overrightarrow{BC} = \overrightarrow{AC}$ : basic geometry \\
\item $d(A,B) + d(B,C) \geq d(A,C)$ : triangle inequality\\
\item $\varphi_{AB} \cdot \varphi_{BC} = \varphi_{AC}$ : cocyclicity of transition functions for a vector bundle, etc.
\end{itemize}
\vspace*{0.2cm}
In fact all of these formulas can be described in term of enriched categories and base changes (see Example \ref{exp-bc}). For example Lawvere \cite{Law} remarked that the \emph{triangle inequality} is the composition in a metric space when considered as an enriched category over $(\ol{\R}_+,+,0, \geq)$.\ \\ 

We've used the terminology `Big bang' because  many structures are encoded in this way by enriched categories. But enriched categories are defined using a big version of the category $\Delta$. And $\Delta$ is itself built from $1$, which in turn can be taking to be $\{ \varnothing \}$.\ \\ 

It appears that almost \emph{everything} comes from the \emph{empty} ... 
\section{Path-Objects in Bicategories}
\subsection{The $2$-path-category}\ \\
\indent We follow the notations of Leinster \cite{Lei3} and denote here by  $\Delta$ the ``augmented'' category of \underline{all} finite totally ordered sets, including the empty set. This $\Delta$ is different from the ``topological'' one, which does not contain the empty set and is commonly used to define simplicial objects. We will note the topological `$\Delta$' by  $\Delta^{+}$ or sometimes $\Delta[-0]$ to stress the fact that the empty set has been removed.\ \\

Recall that the objects of $\Delta$ are ordinal numbers $n=\{0,...,n-1\}$ and the arrows are nondecreasing functions $f:n\longrightarrow m$. $\Delta$ is a monoidal category for the \emph{ordinal addition}, has an initial object $0$ and a terminal object $1$. The object $1$ is a ``universal'' monoid  in the sense that any monoid in a monoidal category $\M$ is the image of $1$ by a \emph{monoidal functor} from $\Delta$ to $\M$. The reader can find this result and a complete description of $\Delta$ in \cite{Mac}.\ \\

\begin{warn}
The category $\Delta$ we consider here corresponds to the category $\Delta^{+}$ used by Deligne in \cite{Hodge_3}. And he denoted by $\Delta$ our category $\Delta[-0]$. 
\end{warn}

\begin{pdef}{\emph{[$2$-Path-category]}}\label{path-bicat}
Let $\C$ be a small category.
\renewcommand\labelenumi{\roman{enumi})}
\begin{enumerate}
\item There exists a strict $2$-category $\P_{\C}$ having the following properties:
\begin{itemize}
\item[$\ast$]the objects of $\P_{\C}$ are the objects of $\C$,
\item[$\ast$]for every pair $(A,B)$ of objects, $\P_{\C}(A,B)$ is posetal and is a category over $\Delta$ i.e we have a functor called \textbf{length} 
$$ \le_{AB} : \P_{\C}(A,B) \to \Delta$$  
\item[$\ast$]$0$ is in the image of $\le_{AB}$ if and only if $A=B$. $\le_{AA}$ becomes a \underline{monoidal} functor with the composition.\\
\end{itemize}
\item if $\C \cong \bf{1}$, say $ob(\C)=\{\o\}$ and $\C(\o,\o)=\{\Id_{\o}\}$, we have \underline{monoidal} isomorphism:
$$\P_{\C}(\o,\o) \xrightarrow{\sim} \Delta $$
\item the operation $\C \mapsto \P_{\C}$ is functorial in $\C$:
\end{enumerate}
\[
\xy
(0,8)*+{\P_{[-]}: \tx{Cat}_{\leq 1}}="X";
(30,8)*+{\tx{Bicat}}="Y";
(3,0)*+{\C \xrightarrow{F} \D}="E";
(35,0)*++{\P_{\C} \xrightarrow{\P_{F}} \P_{\D}}="W";
{\ar@{->}"X";"Y"};
{\ar@{|->}"E";"W"};
\endxy
\]
where $\tx{Cat}_{\leq 1}$ and Bicat are respectively the $1$-category of small categories and the category of bicategories.
\end{pdef}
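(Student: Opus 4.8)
The plan is to construct $\P_{\C}$ by hand as a strict $2$-category whose hom-categories are \emph{categories of $\C$-decorated chains fibred over $\Delta$}, and then to read off (i)--(iii) directly from that description, importing from \cite{Mac} the universal property of $\Delta$ for part (ii).

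First I would fix, for each $n \geq 0$, the ordinal $[n] = \{0 < \cdots < n\}$ viewed as a category, and declare a $1$-cell of $\P_{\C}$ from $A$ to $B$ to be a pair $[n,s]$ with $s \colon [n] \to \C$ a functor (a chain of $n$ composable arrows) satisfying $s(0) = A$ and $s(n) = B$; the length-zero cell $[0,A]$ is the constant functor at $A$ and exists precisely when $A = B$. Composition of $1$-cells is concatenation of chains along the shared endpoint, which is \emph{strictly} associative and strictly unital with the cells $[0,A]$. For $2$-cells I set $\Hom([n,s],[m,t])$ to be the set of endpoint-preserving monotone maps $\alpha \colon [m] \to [n]$ (that is, $\alpha(0) = 0$ and $\alpha(m) = n$) such that $s \circ \alpha = t$; concretely an injective such $\alpha$ composes consecutive arrows of $s$ while a surjective one inserts identities, so these are exactly the $2$-cells described informally above. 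Equivalently, $\P_{\C}(A,B)$ is the category of elements of the nerve $N\C$ with prescribed initial and terminal vertices $A,B$. The length functor $\le_{AB}$ sends $[n,s] \mapsto n$ and sends $\alpha$ to the morphism $n \to m$ of $\Delta$ corresponding to it under the standard duality between endpoint-fixing maps of intervals and $\Delta$; since a $2$-cell \emph{is} such an $\alpha$, the functor $\le_{AB}$ is faithful, which is the precise sense in which $\P_{\C}(A,B)$ is posetal over $\Delta$.

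Next I would verify that $\P_{\C}$ is a strict $2$-category, i.e. a category enriched in $\tx{Cat}$. Strict associativity and unitality at the level of $1$-cells is immediate. The substantive point, and the step I expect to be the main obstacle, is that horizontal composition is a bifunctor satisfying the interchange law: given $2$-cells $\alpha$ over $B \to C$ and $\beta$ over $A \to B$, their horizontal composite is the concatenation $\alpha \ast \beta$ of endpoint-preserving maps, and I must check that this is functorial in each variable and interchanges with vertical composition. I plan to obtain all of this from the functoriality of the concatenation operation $[m] \ast [m'] = [m+m']$ (gluing at the shared endpoint), which on lengths is the ordinal addition of $\Delta$ and so transports its strict monoidal structure to the level of $2$-cells; the interchange law then reduces to the fact that concatenation commutes with composition of monotone maps. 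From the same description I read off the remaining clauses of (i): $\le_{AB}$ is a functor; $0$ lies in its image iff a length-zero cell exists iff $A = B$; and $\le_{AA}$ is strict monoidal because the length of a concatenation is the ordinal sum of the lengths and $\le_{AA}([0,A]) = 0$, compatibly on $2$-cells.

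For (ii), when $\C \cong \bf{1}$ every chain of length $n$ equals $(\Id_{\o}, \dots, \Id_{\o})$, so $[n,s]$ is determined by $n$ and the condition $s \circ \alpha = t$ becomes vacuous; hence $\le_{\o\o}$ is a bijection on objects and on each hom-set, i.e. an isomorphism of categories onto $\Delta$. It is monoidal, concatenation corresponding to $+$ and $[0,\o]$ to $0$; rather than recheck coherence by hand I would invoke Mac Lane's presentation of $(\Delta,+,0)$ as the free strict monoidal category on a monoid \cite{Mac}, the generating monoid being the length-$1$ cell with its ``compose'' $2$-cell $2 \to 1$ and ``insert identity'' $2$-cell $0 \to 1$. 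Finally, for (iii), a functor $F \colon \C \to \D$ yields $\P_{F}$ acting as $F$ on objects, arrow-wise on chains $[n,(f_{1},\dots,f_{n})] \mapsto [n,(Ff_{1},\dots,Ff_{n})]$, and as the identity on the underlying maps $\alpha$; the one place functoriality of $F$ is needed is precisely here, since well-definedness on $2$-cells amounts to $F$ preserving composition (merging $f_{i+1} \circ f_{i}$) and identities (inserting $\Id_{a_{j}}$), equivalently to $NF$ being a map of simplicial sets. It is then routine that $\P_{F}$ is a strict $2$-functor and that $\P_{\Id} = \Id$ and $\P_{G \circ F} = \P_{G} \circ \P_{F}$, so that $\P_{[-]} \colon \tx{Cat}_{\leq 1} \to \tx{Bicat}$ is a functor landing in strict $2$-functors, hence in particular in morphisms of bicategories.
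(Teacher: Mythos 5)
Your proposal is correct and is essentially the paper's own construction: the paper (in its sketch and Appendix B) builds $\P_{\C}(A,B)$ as the Grothendieck category of elements of a cosimplicial set $\P_{AB}$ whose $n$-simplices are nerve chains with extremal vertices $A,B$, takes concatenation as the strictly associative composition with $[0,A]$ as strict identity, obtains the length functor as the forgetful functor to $\Delta$, and gets (iii) by direct inspection --- exactly your description, with your endpoint-preserving interval maps $[m]\to[n]$ being the paper's $\Delta$-morphisms $n\to m$ under the duality you invoke, and your appeal to Mac Lane for (ii) matching the identification the paper recalls later in the proof of Proposition \ref{hotmon}. The one divergence worth recording is that you quietly replace the statement's claim that $\P_{\C}(A,B)$ is \emph{posetal} by faithfulness of $\le_{AB}$ over $\Delta$; that is the defensible reading, since literal posetality is false --- already by part (ii) of the same statement, $\P_{\bf{1}}(\o,\o)\cong\Delta$ has parallel $2$-cells, e.g.\ the two maps $1\to 2$ of $\Delta$ both carry $[1,\Id_{\o}]$ to $[2,(\Id_{\o},\Id_{\o})]$ --- and the paper's proof nowhere establishes posetality either, so your version is if anything the more careful one.
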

\begin{proof}
The construction of $\P_{\C}$ is exposed in Appendix B but we give a brief idea hereafter.\\ 

\textbf{Step1}: Using the philosophy of the Bar construction with respect to the composition in $\C$, we build the following diagrams

\[
\xy
(-75,0)*++{\C(A,B)}="X";
(-40,0)*++{\coprod \C(A,A_{1})\times \C(A_{1},B)}="Y";
(16,0)*++{\coprod \C(A,A_{1}) \times \C(A_{1},A_{2})\times \C(A_{2},B)}="Z";
(47,0)*++{\cdots};
{\ar@{->}"Y";"X"};
{\ar@{.>}"Y";"Z"};
{\ar@<-1.0ex>@{.>}"X";"Y"};
{\ar@<1.0ex>@{.>}"X";"Y"};
{\ar@<-1.0ex>"Z";"Y"};
{\ar@<1.0ex>"Z";"Y"};
{\ar@<-1.8ex>@{.>}"Y";"Z"};
{\ar@<1.8ex>@{.>}"Y";"Z"};
\endxy
\]

\[
\xy
(-65,0)*++{\C(A,A)}="X";
(-30,0)*++{\coprod \C(A,A_{1})\times \C(A_{1},A)}="Y";
(26,0)*++{\coprod \C(A,A_{1}) \times \C(A_{1},A_{2})\times \C(A_{2},A)}="Z";
(-93,0)*+{\{A\} \cong 1}="O";
(57,0)*++{\cdots};
{\ar@{->}"Y";"X"};
{\ar@{.>}"Y";"Z"};
{\ar@{->}^{~~~1_{A}}"O";"X"};
{\ar@<-1.0ex>@{.>}"X";"Y"};
{\ar@<1.0ex>@{.>}"X";"Y"};
{\ar@<-1.0ex>"Z";"Y"};
{\ar@<1.0ex>"Z";"Y"};
{\ar@<-1.8ex>@{.>}"Y";"Z"};
{\ar@<1.8ex>@{.>}"Y";"Z"};
\endxy
\]
\ \\
These diagrams correspond to cosimplicial sets, that is functors from either $\Delta$ or $\Delta^{+}$ to the category of sets.\ \\

\textbf{Step 2}: We take $\P_{\C}(A,B)$ to be the category of elements of the previous functor using the Grothendieck construction. One can observe that $\P_{\C}(A,B)$ `is' the total complex in the sens of Bousfield-Kan \cite{BK-hlim}  of the corresponding cosimplicial set (see also \cite{PC-hcc} for a description).   \ \\

\textbf{Step 3}: We define the composition to be the concatenation of chains. 
\end{proof}

\begin{rmk}\ \
\begin{itemize}
\item The `path-functor' as presented above doesn't extend immediately to a $2$-functor because natural transformations in Cat are not sent to transformation in Bicat. This is due to the fact that each $\P_{\C}(A,B)$ is posetal.

\item We can fix the problem either by using co-spans in $\P_{\C}(A,B)$  or by localizing each $\P_{\C}(A,B)$ with respect to the class of maps which correspond to compositions in $\C$, then `reversing the composition'. 

\item Another solution could be to work in the area of Leinster's fc-multicategories instead of staying in Bicat, but we won't do here. In fact $\P_{\C}$ carries a good enough combinatoric for our first purpose which is to have a Segal version of enriched categories.
\end{itemize}
\end{rmk}

\begin{obs}\label{obsPC1}\ \\
\indent For a small category $\C$, the following properties follow directly from the construction of $\P_{\C}$.
\renewcommand\labelenumi{\roman{enumi})}

\begin{enumerate}
\item Since $\bf{1}$ is terminal in Cat, we have by funtoriality a homomorphism (strict $2$-funtor):
$\P_{\C} \rightarrow \P_{\bf{1}}$. We may call it the `skeleton-morphism' and we will \textbf{view $\P_{\C}$ to be over $\P_{\bf{1}}$}. 
\item We have $(\P_{\C})^{op} \cong \P_{\C^{op}}$, where $(\P_{\C})^{op}$ is the opposite $2$-category of $\P_{\C}$.
\item We have a functor : $\C \xrightarrow{i} \P_{\C}$ which is the identity on objects and sends every non-identity arrow $f$ of $\C$ to the chain $[1,f]$ of $\P_{\C}$. Each identity $\Id_A$ is sent to $[0,A]$.
\item We have also a functor :$\P_{\C} \xrightarrow{\tx{comp}} \C$ which is also the identity on objects and sends a chain $[n,s]$ to the composite of the arrows which form $s$. Each $[0,A]$ is sent obviously to $\Id_A$. This functor `kills' the $2$-cells of $\P_{\C}$. 
\item It follows that the identity functor of $\C$ factors as : $\Id_{\C}= \tx{comp} \circ i$. Then for any bicategory $\M$, every functor of $1$-categories $F:\C \to \M_{\leq 1}$ yields a \underline{strict} homomorphism of bicategories $\tx{\Fr}({F}): \P_{\C} \to \M_{\leq 1} \hookrightarrow \M$. Here $\M_{\leq 1}$ represents the underlying $1$-category of $\M$. 
\item Using the functor $i$, we see that any homomorphism $\widehat{F}: \P_{\C} \to \M$ induces a functor between $1$-categories $\widetilde{F}: \C \to \M_{\leq 1}$. \textbf{From these observations we see that we may prefer working with $\P_{\C}$ rather than $\C$ even in the classical situations}. 
\end{enumerate}
\end{obs}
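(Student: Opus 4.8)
The plan is to derive all six observations from the explicit construction of $\P_\C$ (chains and concatenation) together with the functoriality statement of Proposition-Definition \ref{path-bicat}; as the statement itself advertises, none needs a new idea, only a careful check that the chain description behaves as claimed. For (i) I would use that $\mathbf{1}$ is terminal in $\tx{Cat}_{\leq 1}$: the unique $!\colon\C\to\mathbf{1}$ is sent by $\P_{[-]}$ to a strict $2$-functor $\P_{!}\colon\P_\C\to\P_{\mathbf{1}}$, which under the identification $\P_{\mathbf{1}}(\o,\o)\cong\Delta$ of part (ii) of Proposition-Definition \ref{path-bicat} records only the length of a chain; hence $\P_{!}$ is the skeleton morphism and assembles the hom-wise length functors $\le_{AB}$ into a single $2$-functor exhibiting $\P_\C$ over $\P_{\mathbf{1}}$. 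For (ii) I would define a strict $2$-functor $(\P_\C)^{op}\to\P_{\C^{op}}$ that is the identity on objects and reverses chains, $[n,(f_1,\dots,f_n)]\mapsto[n,(f_n^{op},\dots,f_1^{op})]$; reversal is an involution that exchanges the two orders of concatenation and carries the order-maps defining the $2$-cells to their reverses, so it is an isomorphism of $2$-categories with inverse given by the same recipe over $\C^{op}$.

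For (iii) and (iv) I would write the two comparison functors down explicitly. The map $i\colon\C\to\P_\C$ is the identity on objects, sends a non-identity $f$ to $[1,f]$ and each $\Id_A$ to the unit $[0,A]$; it preserves identities strictly, whereas $g\circ f$ is sent to $[1,g\circ f]$, which is linked to $i(g)\ast i(f)=[2,(f,g)]$ by the codegeneracy $2$-cell that composes the two adjacent arrows. By contrast $\tx{comp}\colon\P_\C\to\C$, the identity on objects, sending $[n,s]$ to the total composite of the arrows of $s$ and $[0,A]$ to $\Id_A$, is an honest strict $2$-functor: concatenation of chains is carried to composition in $\C$ strictly associatively, and every $2$-cell is sent to an identity because inserting identities and composing adjacent arrows both leave the total composite unchanged, so $\tx{comp}$ indeed ``kills'' the $2$-cells.

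For (v) I would evaluate $\tx{comp}\circ i$ arrow by arrow, $f\mapsto[1,f]\mapsto f$ and $\Id_A\mapsto[0,A]\mapsto\Id_A$, obtaining the genuine equality of functors $\tx{comp}\circ i=\Id_\C$; note this holds even though $i$ itself is not strictly composition-preserving, since $\tx{comp}$ collapses the length discrepancy. The free homomorphism attached to a $1$-functor $F\colon\C\to\M_{\leq 1}$ is then gotten by transport: I would set $\Fr(F):=\iota\circ F\circ\tx{comp}$ with $\iota\colon\M_{\leq 1}\hookrightarrow\M$ the inclusion, a composite of strict homomorphisms and hence strict, satisfying $\Fr(F)\circ i=\iota\circ F$ by the factorization, so that $\Fr(F)$ extends $F$. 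Conversely, for (vi) I would restrict a homomorphism $\widehat F\colon\P_\C\to\M$ along $i$, setting $\widetilde F(A)=\widehat F(A)$ and $\widetilde F(f)=\widehat F([1,f])$, with functoriality holding up to the coherence $2$-cell that $\widehat F$ assigns to $[2,(f,g)]\to[1,g\circ f]$.

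The step I expect to be delicate is not any of the explicit constructions but the precise sense in which (iii) and (vi) produce \emph{functors}: because concatenation is not composition, $i$ preserves composition only up to the codegeneracy $2$-cells, the same non-strictness that the remark following Proposition-Definition \ref{path-bicat} proposes to repair by localizing each hom-category along the composition maps and ``reversing the composition''. Thus the clean statements require either landing in $\M_{\leq 1}$, where $\tx{comp}$ has already discarded those $2$-cells (as in (v)), or passing to that localized model. Keeping track of where one works with strict data and where one works only up to these canonical $2$-cells is the real content; everything else reduces to the associativity and unit laws of composition in $\C$ already used to build $\P_\C$.
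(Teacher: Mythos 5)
Your verification is correct and takes exactly the route the paper intends: the paper states these observations without proof as immediate consequences of the construction of $\P_{\C}$, and your explicit checks (the skeleton morphism $\P_{!}$ assembling the length functors, chain reversal for $(\P_{\C})^{op}\cong\P_{\C^{op}}$, $\tx{comp}$ as a strict $2$-functor killing $2$-cells, $\Fr(F)=\iota\circ F\circ\tx{comp}$, and restriction along $i$) are the intended arguments — indeed your caveat that $i$ and $\widetilde{F}$ preserve composition only up to the canonical $2$-cell $[2,(f,g)]\to[1,g\circ f]$ is a correct refinement of the paper's looser use of the word ``functor''. One terminological slip: in the paper's conventions (see the proof of Proposition \ref{c-enr}) the maps of type $(\ast)$ composing adjacent arrows are the cofaces $d^{i}:n+1\to n$, while the codegeneracies $s^{i}$ insert identities, so the $2$-cell $[2,(f,g)]\to[1,g\circ f]$ you invoke should be called a coface, not a codegeneracy.
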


\subsection{Basic propreties}\ \\
In the following we give some basic properties of the path-functor  \\

It is clear that $\P_{[-]}$ preserves equivalence and is an isomorphism reflecting in an obvious manner. It's even an `equivalence' reflecting.\\

From the definition one hase immediately that $\P_{\C \coprod \D} \cong \P_{\C} \coprod \P_{\D}$. For the product we need to be careful.\ \\ 

Let $\C$ and $\D$ be two small categories and $\C \times \D$ their cartesian product. The projections from $\C \times \D$ to each factor induce two maps 
$$\P_{\C \times \D} \to \P_{\C}, \ \ \ \ \\ \P_{\C \times \D} \to \P_{\D} $$ 
which in turn give a map
$$\P_{\C \times \D} \rightarrow (\P_{\C} \times \P_{\D})$$ 
by universal property of the cartesian product.

This map has no section, which means that it cannot be a biequivalence. We can see it from the fact that we have a canonical map $\P_{\C \times \D} \rightarrow \P_{\bf{1}}$ while such map doesn't exist with $\P_{\C} \times \P_{\D}$. This is related to the fact that $(\Delta, +,0)$, which is $\P_{\bf{1}}$, \textbf{is not symmetric monoidal}. \ \\ 

But everything is not lost since we have.
\begin{prop}
Let $\C$ and $\D$ be two small categories and $\P_{\C} \rightarrow \P_{\bf{1}}$, $\P_{\D} \rightarrow \P_{\bf{1}}$ the corresponding $2$-path-categories. Then we have an isomorphism of $2$-categories
$$ \P_{\C \times \D} \xrightarrow{\sim} (\P_{\C} \times_{\P_{\bf{1}}} \P_{\D}).$$
\end{prop}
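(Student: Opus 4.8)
The plan is to prove the isomorphism one hom-category at a time, exploiting the description of $\P_{\C}$ recorded in Step~1--2 of Proposition-Definition~\ref{path-bicat}: each hom-category $\P_{\C}(A,B)$ is the category of elements (Grothendieck construction) of a bar-type cosimplicial set of composable chains, and the length functor $\le_{AB}$ is the projection to $\Delta$. The fibre product is taken over the skeleton-morphisms $\P_{\C}\to\P_{\bf{1}}$, $\P_{\D}\to\P_{\bf{1}}$ of Observations~\ref{obsPC1}(i), so it is a strict $2$-pullback of strict $2$-categories along strict $2$-functors. First I would settle the cheap data: since $\P_{\bf{1}}$ has a single object $\o$, the object sets of both sides are $Ob(\C)\times Ob(\D)$, and under the intended correspondence the canonical projection $\P_{\C\times\D}\to\P_{\bf{1}}$ coincides with the fibre-product projection, both being the length. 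Because the hom-categories of a strict $2$-pullback are ordinary pullbacks of hom-categories over $\P_{\bf{1}}(\o,\o)\cong\Delta$, it then suffices to produce, for each $(A,A'),(B,B')$, an isomorphism of categories
$$\P_{\C\times\D}((A,A'),(B,B'))\;\xrightarrow{\sim}\;\P_{\C}(A,B)\times_{\Delta}\P_{\D}(A',B')$$
that is strictly compatible with concatenation and units.

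The heart of the argument is a factorwise identity at the cosimplicial level. Let $N^{\C}_{A,B}\colon\Delta\to\mathbf{Set}$ denote the cosimplicial set whose value at $n$ is the set of length-$n$ chains of composable arrows of $\C$ from $A$ to $B$, with structure maps given by composition of adjacent arrows and insertion of identities. Since an arrow of $\C\times\D$ is a pair of arrows and composition (resp.\ identities) in $\C\times\D$ is computed coordinatewise, a length-$n$ chain in $\C\times\D$ from $(A,A')$ to $(B,B')$ is exactly a pair consisting of a length-$n$ chain in $\C$ from $A$ to $B$ and a length-$n$ chain in $\D$ from $A'$ to $B'$; the structure maps match coordinatewise as well. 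I would thus establish the isomorphism of cosimplicial sets
$$N^{\C\times\D}_{(A,A'),(B,B')}\;\cong\;N^{\C}_{A,B}\times N^{\D}_{A',B'}.$$
A sanity check on the augmentation: the degree-$0$ level of $N^{\C}_{A,B}$ is a singleton when $A=B$ and empty otherwise, so the product is a singleton precisely when $(A,A')=(B,B')$, in agreement with the clause ``$0$ is in the image of $\le_{AB}$ iff $A=B$''.

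Next I would pass to categories of elements, using that the Grothendieck construction sends the pointwise product of two $\mathbf{Set}$-valued functors on $\Delta$ to the fibre product over $\Delta$ of their categories of elements, i.e.\ $\mathrm{El}(F\times G)\cong\mathrm{El}(F)\times_{\Delta}\mathrm{El}(G)$: an object of the target is a triple $(n,s,t)$, and a morphism is a single $\Delta$-map $\phi\colon n\to m$ acting on both coordinates, which is exactly a pair of morphisms of $\mathrm{El}(F)$ and $\mathrm{El}(G)$ lying over the same arrow of $\Delta$. Combining this with the previous display yields the required isomorphism of hom-categories, the length functors $\le_{AB}$ serving as the legs to $\Delta=\P_{\bf{1}}(\o,\o)$. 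Finally I would check strict $2$-functoriality for composition: concatenation of chains in $\C\times\D$ corresponds to coordinatewise concatenation, and because each $\le$ is monoidal (ordinal addition), the two coordinates stay synchronised in length, so composition in the fibre product matches; units are sent strictly by $[0,(A,A')]\mapsto([0,A],[0,A'])$, and all associativity is strict on both sides. This produces a strict $2$-functor that is bijective on objects, $1$-cells and $2$-cells, hence an isomorphism of $2$-categories.

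I expect the only genuine subtlety to lie at the level of $2$-cells, and it is precisely the point that forces the base of the fibre product to be $\P_{\bf{1}}$. A $2$-cell of $\P_{\C\times\D}$ is one $\Delta$-morphism acting diagonally on a pair $(s,t)$, whereas a $2$-cell of $\P_{\C}\times_{\Delta}\P_{\D}$ is a priori a pair $(\phi,\psi)$ of $\Delta$-morphisms; the fibre-product condition over $\Delta$ imposes $\phi=\psi$, and it is exactly this enforced coincidence that makes the two descriptions agree. This also explains, in hindsight, why the naive comparison map $\P_{\C\times\D}\to\P_{\C}\times\P_{\D}$ of the preceding paragraph cannot be inverted: the plain product permits unrelated lengths and unrelated $2$-cells in the two factors, while the single combinatorial datum of a chain in $\C\times\D$ synchronises both.
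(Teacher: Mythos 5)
Your proposal is correct and follows essentially the same route as the paper's own (sketched) proof: identify a length-$n$ chain in $\C\times\D$ with a pair of length-$n$ chains, observe that a $2$-cell is a single map of $\Delta$ acting simultaneously on both coordinates (exactly the fibre-product condition over $\P_{\bf{1}}$), and check that factor-wise concatenation matches. Your only refinement is to package the hom-category comparison as the general fact $\mathrm{El}(F\times G)\cong\mathrm{El}(F)\times_{\Delta}\mathrm{El}(G)$ for the Grothendieck construction, together with the explicit sanity check at level $0$, which makes rigorous what the paper leaves as ``it suffices to write the definition.''
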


\begin{proof}[\scshape{Sketch of proof}]
It suffices to write the definition of $\P_{\C \times \D}$. A chain $[n,s]$ in $\P_{\C \times \D}$ is by definition the same thing as a couple of chains $([n,s_{\C}],[n,s_{\D}])$. And a morphism of chains is $\P_{\C \times \D}$ is by definition a morphism of $\Delta$ which is `simultaneously' the same in both $\P_{\C}$ and  $\P_{\D}$ which means that it's a morphism of the fiber product $\P_{\C} \times_{\P_{\bf{1}}} \P_{\D}$.\ \\

Here $\P_{\C} \times_{\P_{\bf{1}}} \P_{\D}$ is given by :\ \\

\begin{itemize}
\item Objects : $Ob(\C) \times Ob(\D)$ 
\item Morphisms : Consider two couples $(A,X)$, $(B,Y)$, with $A,B$ objects of $\C$ and $X,Y$ objects of $\D$. From the length functors :  
$$ \le_{AB} : \P_{\C}(A,B) \to \Delta, \ \ \ \ \le_{XY} : \P_{\D}(X,Y) \to \Delta$$
we define $$(\P_{\C} \times_{\P_{\bf{1}}} \P_{\D})[(A,X),(B,Y)]:= \P_{\C}(A,B) \times_{\Delta} \P_{\D}(X,Y).$$ 
\ \\ 
\item The composition is given by the concatenation of chains \emph{factor-wise}. 
\end{itemize}
\end{proof}

\begin{rmk}\ \
\renewcommand\labelenumi{\roman{enumi})}
\begin{enumerate}
\item If we use cospans in each $\P_{\C}(A,B)$ and extend $\P_{[-]}$ to a $2$-functor, then one can compute the general limits and colimits with respect to $\P_{[-]}$, but we won't do it here.
\item  If we apply the $2$-functor to a monoidal category $\D$, then $\P_{\D}$ will be a monoidal $2$-category with a suitable tensor product. 
\end{enumerate}
\end{rmk}
\subsection{Base of enrichment} \ \\
\indent Let $\M$ be a bicategory and $\W$ a class of $2$-cells of $\M$.
\begin{df}\label{base_enrich}
The pair $(\M, \W)$ is said to be a \underline{base} of \underline{enrichment} if $\W$ has the following properties:
\renewcommand\labelenumi{\roman{enumi})}
\begin{enumerate}
\item Every invertible $2$-cell of $\M$ is in $\W$, in particular $2$-identities are in $\W$,
\item $\W$ has the vertical ‘$3$ out of $2$' property, that is :
\[
\xy
(0,0)*+{\xy
(0,0)*+{U}="A";
(15,0)*+{V}="B";
{\ar@/^1.9pc/^{~~~f}"A";"B"};
{\ar@/_1.9pc/_{~~~h}"A";"B"};
{\ar@{->}_{}"A";"B"};
{\ar@{=>}_{\alpha}(7.5,7);(7.5,1)};
{\ar@{=>}_{\beta}(7.5,-1);(7.5,-7)};
\endxy}="X";
(30,0)*+{\xy
(0,0)*+{U}="A";
(20,0)*+{V}="B";
{\ar@/^1.4pc/^{~~~f}"A";"B"};
{\ar@/_1.4pc/_{~~~h}"A";"B"};
{\ar@{=>}_{\beta \star \alpha}(10,4);(10,-4)};
\endxy}="Y";
{\ar@{~>}"X";"Y"};
\endxy
\]
if $2$ of  $\alpha$, $\beta$, $\beta \star \alpha$ are in $\W$ then  so is the third,
\item $\W$ is stable by \emph{horizontal} compositions, that is : 
\[
\xy
(-10,0)*+{\xy
(0,0)*+{W}="C";
(15,0)*+{V}="B";
(30,0)*+{U}="A";
{\ar@/_1.3pc/_{f}"A";"B"};
{\ar@/^1.3pc/^{f'}"A";"B"};
{\ar@/_1.3pc/_{g}"B";"C"};
{\ar@/^1.3pc/^{g'}"B";"C"};
{\ar@{=>}_{\beta}(7.5,4);(7.5,-4)};
{\ar@{=>}_{\alpha}(22.5,4);(22.5,-4)};
\endxy}="X";
(30,0)*+{\xy
(0,0)*+{W}="C";
(20,0)*+{U}="A";
{\ar@/_1.3pc/_{g \otimes f}"A";"C"};
{\ar@/^1.3pc/^{g' \otimes f'}"A";"C"};
{\ar@{=>}_{\beta \otimes \alpha}(10,4);(10,-4)};
\endxy}="Y";
{\ar@{~>}"X";"Y"}:
\endxy 
\]
if $\alpha$ and $\beta$ are both in $\W$ then so is $\beta \otimes \alpha$. 
\end{enumerate}
\end{df}
\begin{obs} \ \\
\indent This definition is simply a generalization of the environment required by Leinster to define the notion of \emph{up-to-homotopy monoid} in \cite{Lei3}, when $\M$ is a monoidal category, hence a bicategory with one object.
\end{obs}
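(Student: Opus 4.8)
The plan is to prove the claim by making the delooping (suspension) correspondence between monoidal categories and one-object bicategories completely explicit, and then checking that the three axioms of Definition \ref{base_enrich} become, term by term, the conditions Leinster imposes in \cite{Lei3}. Since the statement asserts no more than that ``base of enrichment'' specialises to Leinster's homotopical monoidal environment in the one-object case, the whole argument is a translation of data and conditions through a known equivalence; there is no genuinely hard analytic step, and the only care needed is in matching the two kinds of composition correctly.

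First I would recall the standard dictionary. A monoidal category $(\underline{M},\otimes,I)$ deloops to a bicategory $\Sigma\underline{M}$ with a single object $\o$, hom-category $\Sigma\underline{M}(\o,\o)=\underline{M}$, horizontal composition of $1$-cells given by $\otimes$, and unit $1$-cell $I$. Under this dictionary the $2$-cells of $\Sigma\underline{M}$ are exactly the morphisms of $\underline{M}$; vertical composition of $2$-cells is ordinary composition in $\underline{M}$, while horizontal composition $\beta\otimes\alpha$ is the functoriality of $\otimes$ applied to the pair $(\beta,\alpha)$. A class $\W$ of $2$-cells of $\Sigma\underline{M}$ is therefore nothing but a class of morphisms of $\underline{M}$, and I would fix this identification once and for all before touching the axioms.

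Next I would run each condition of Definition \ref{base_enrich} through the dictionary. Condition (i) says every invertible $2$-cell lies in $\W$; since the invertible $2$-cells of $\Sigma\underline{M}$ are precisely the isomorphisms of $\underline{M}$, this becomes ``$\W$ contains all isomorphisms of $\underline{M}$''. Condition (ii), the vertical $3$-out-of-$2$ property, is stated for the vertical composite $\beta\star\alpha$, which is ordinary composition in $\underline{M}$, so it becomes the usual $3$-for-$2$ property of $\W$ in $\underline{M}$. Condition (iii), stability under horizontal composition, reads ``$\alpha,\beta\in\W \Rightarrow \beta\otimes\alpha\in\W$'', i.e. $\W$ is closed under the monoidal product. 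Comparing this translated list with the hypotheses Leinster places on $(\M,\W)$ to define up-to-homotopy monoids, I would observe that the three conditions coincide exactly, which finishes the argument.

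The main obstacle, such as it is, is bookkeeping rather than mathematics: one must not confuse the horizontal composition of $\Sigma\underline{M}$ (which is $\otimes$) with its vertical composition (which is composition in $\underline{M}$). The subtle point worth flagging is that for a non-strict monoidal category the delooping $\Sigma\underline{M}$ is a genuine bicategory whose associativity and unit constraints appear as invertible structural $2$-cells; condition (i) automatically places these in $\W$, which is consistent with Leinster's setup and shows that nothing is lost in passing to the bicategorical formulation. I would also note, for completeness, that the reverse translation holds verbatim, so the one-object case of a base of enrichment and Leinster's homotopical monoidal category determine each other, which is what justifies the word \emph{generalization} in the statement.
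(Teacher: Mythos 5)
Your proposal is correct and is precisely the argument the paper intends: the observation is stated without proof because it rests on Bénabou's identification of monoidal categories with one-object bicategories (recalled in Example \ref{mon} of Appendix A), and your term-by-term translation of the three axioms of Definition \ref{base_enrich} into Leinster's conditions on $(\M,\W)$ just makes that implicit dictionary explicit. Your care in distinguishing vertical composition (composition of morphisms in the underlying monoidal category) from horizontal composition (the tensor $\otimes$), and your remark that the structural constraints of the non-strict delooping land in $\W$ by condition (i), address the only points where the translation could go wrong.
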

\begin{rmk}
The reader may observe that for any bicategory $\M$,  the class $\W=2$-Iso consisting of all invertible $2$-cells of $\M$ satisfies the previous properties. In this way we can say that the pair $(\M, 2\tx{-Iso})$ is the smallest base of enrichment since by definition every base $(\M,\W)$ contains $(\M, 2\tx{-Iso})$. 

Note that if we take $\W$ to be the class $2$-$\tx{Mor}(\M)$ of all of $2$-cells we get the largest base $(\M, 2\tx{-Mor}(\M))$
\end{rmk}

\subsection{Path-object}

\begin{df}{\emph{[Path-object]}}\label{p-object}
Let $(\M, \W)$ be a base of enrichment.  A \emph{Path-object} of $(\M,\W)$ is a couple $(\C,F)$, where $\C$ is a small category and $F=(F,\varphi)$ a \underline{colax} morphism of Bénabou:
$$F : \P_{\C} \longrightarrow \M $$
such that for any objects $A$, $B$, $C$ of $\C$ and  any pair $(t,s)$ in $\P_{\C}(B,C) \times \P_{\C}(A,B)$, all  the $2$-cells 
$$F_{AC}(t \otimes s) \xrightarrow{\varphi(A,B,C)(t,s)} F_{BC}(t) \otimes F_{AB}(s) $$ 
$$ F_{AA}([0,A]) \xrightarrow{\varphi_{A}} I'_{FA}$$
\underline{are in} $\W$. Such a colax morphism will be called a \textbf{$\W$-colax morphism}.
\end{df}

\begin{term} \
\renewcommand\labelenumi{\alph{enumi})} 
\begin{enumerate}
\item If $\W$ is a class of $2$-cells called $2$-\emph{homotopy equivalences} then $(\C,F)$ will be called a \emph{Segal path-object}. The maps  $\varphi(A,B,C)$ and $\varphi_{A}$ will be called \emph{Segal maps}. If $F$ is a strict homomorphism (respectively nonstrict homomorphism) we will say that that $(\C,F)$ is a \underline{strict} Segal path-object (respectively \emph{quasi-strict} Segal path-object).\\

\item For $U$ in $Ob(\M)$, an object \textit{over} $U$ is an object $A$ of  $\C$ such that $FA=U$ (See  Figure 1 below). Here we've followed the geometric picture in enrichment over bicategories as in \cite{Str}, \cite{Wa1}, \cite{Wa2}. Sometimes it's also worthy to think it as an object \textbf{connected to $U$} because we're going to use the combinatoric of $\P_{\C}$ to `extract' from $\M$, `the skeleton' of a category.    \\

\item If $\W=2$-$Mor(\M)$, we will not mention $\W$ and call $(\C,F)$ a path-object of $\M$.\\
 
\item Since a path-object is a sort of morphism from $\P_{\C}$ to $\M$ we will call it a `$\P_{\C}$-point' or a `$\P_{\C}$-module' of $\M$. And for short we will simply say \textbf{$\C$-point} or \textbf{$\C$-module} of $\M$. We will therefore say Segal $\C$-point (or $\C$-module) for a Segal path-object $(\C,F)$.
\end{enumerate}
\end{term}

\includegraphics[width=16cm,height=10cm]{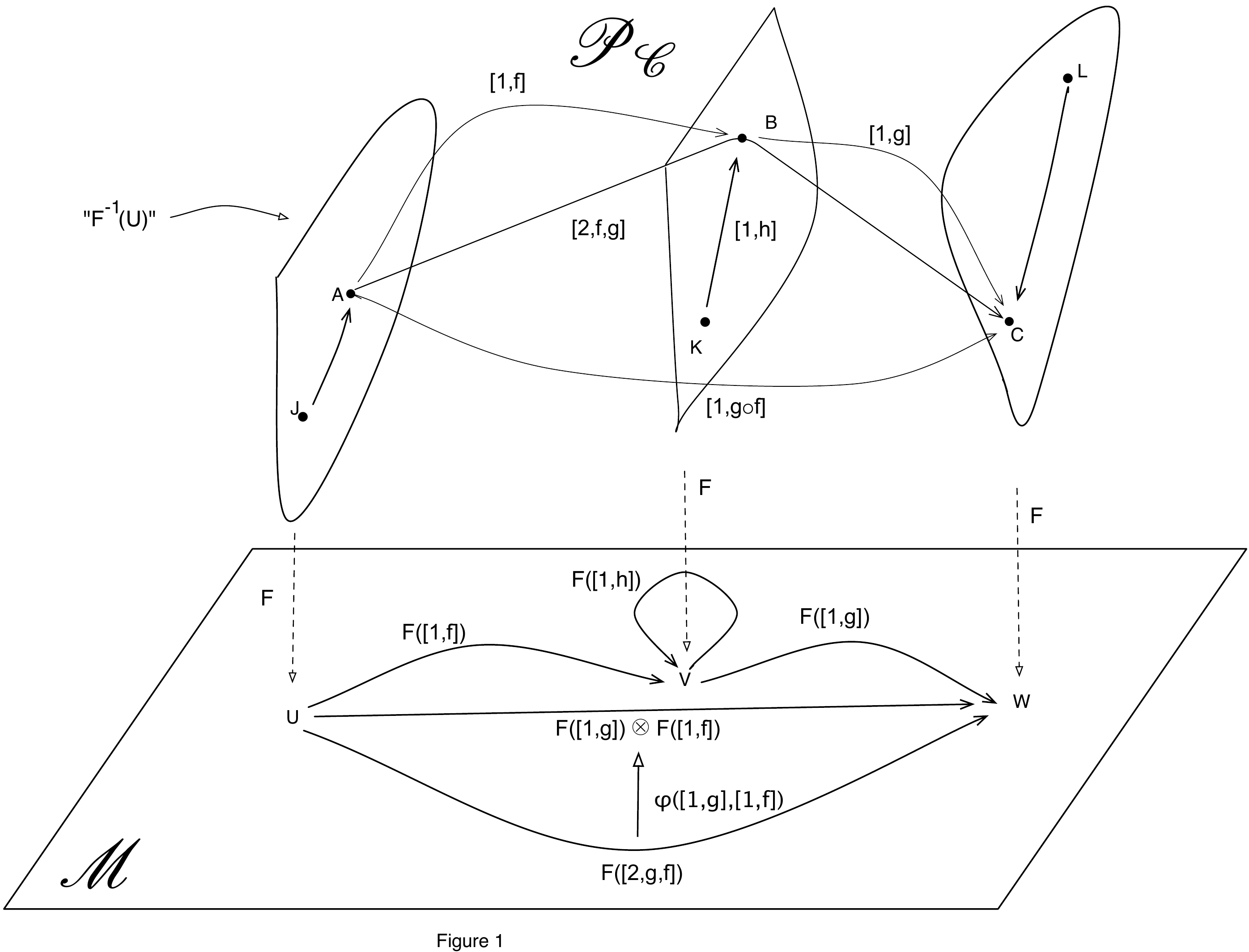}

\begin{obs} \ \\
\indent In Figure 1, we took $t=[1,B \xrightarrow{g} C]$ and $s=[1,A \xrightarrow{f} B]$. \\
We have $t \otimes s =[2,A \xrightarrow{f} B \xrightarrow{g} C]$ and a canonical $2$-cell in  $\P_{\C}(A,C)$
\[
\xy
(0,0)*+{A}="A";
(30,0)*+{C}="C";
{\ar@/_1.3pc/_{[1,A \xrightarrow{g \circ  f} C]}"A";"C"};
{\ar@/^1.3pc/^{[2,A \xrightarrow{f} B \xrightarrow{g} C]}"A";"C"};
{\ar@{=>}_{2 \xrightarrow{!} 1}(15,4);(15,-4)};
\endxy
\]
given by the composition in $\C$ and `parametrized' by the (unique) arrow  $2 \xrightarrow{!} 1$ of $\Delta$. The image by $F$ of this $2$-cell is a $2$-cell of $\M$
\[
\xy
(0,0)*+{U}="A";
(30,0)*+{W}="C";
{\ar@/_1.3pc/_{F_{AC}([1,A \xrightarrow{g \circ  f} C])}"A";"C"};
{\ar@/^1.3pc/^{F_{AC}([2,A \xrightarrow{f} B \xrightarrow{g} C])}"A";"C"};
{\ar@{=>}_{}(15,4);(15,-4)};
\endxy.
\]
Now if we combine this with the colaxity map $\varphi(A,B,C)(t,s)$ we have the following span in $\M(U,W)$ :
\[
\xy
(-10,0)*+{F_{BC}([1,B \xrightarrow{g} C]) \otimes F_{AB}([1,A \xrightarrow{f} B])}="X";
(50,0)*+{F_{AC}([1,A \xrightarrow{g \circ  f} C])}="Y";
(22,22)*+{F_{AC}([2,A \xrightarrow{f} B \xrightarrow{g} C])}="Z";
{\ar@{->}_{\varphi(A,B,C)(t,s)~~~~~}"Z"; "X"};
{\ar@{->}^{F_{AC}(`2 \xrightarrow{!} 1')}"Z"; "Y"};
\endxy
\]

If $\varphi(A,B,C)(t,s)$ is a \emph{weak equivalence} (\textit{e.g} a Segal map) therefore is \emph{weakly} invertible, any choice of a weak inverse of $\varphi(A,B,C)(t,s)$ will give a map :
$$ F_{BC}([1,B \xrightarrow{g} C]) \otimes F_{AB}([1,A \xrightarrow{f} B]) \longrightarrow F_{AC}([1,A \xrightarrow{g \circ  f} C]) $$
by running the span from the left to the right.

In that situation if we want this construction to be consistent, we have to assume that all the weak inverses of $\varphi(A,B,C)(t,s)$ must be \emph{homotopy equivalent} in some sense. In this way the `space' of the maps
$$ F_{BC}([1,B \xrightarrow{g} C]) \otimes F_{AB}([1,A \xrightarrow{f} B]) \longrightarrow F_{AC}([1,A \xrightarrow{g \circ  f} C]) $$
obtained for each weak inverse, will be  \emph{contractible} in some sense.

One of the interesting situations is when  $F_{BC}([1,B \xrightarrow{g} C])$ and $F_{AB}([1,A \xrightarrow{f} B])$ stand for hom-objects of some category-like structure.  The maps $$ F_{BC}([1,B \xrightarrow{g} C]) \otimes F_{AB}([1,A \xrightarrow{f} B]) \longrightarrow F_{AC}([1,A \xrightarrow{g \circ  f} C]) $$
will be a sort of composition up-to \emph{homotopy}, like for classical Segal categories.
\end{obs}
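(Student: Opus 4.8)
The displayed statement is a construction rather than an equational identity, so the work to be done is to check that every arrow in the span is legitimately defined and that, under the homotopy hypothesis on $\W$, ``running the span from left to right'' produces a composition that is well-defined up to homotopy. The plan is to proceed in four steps. First I would pin down the canonical $2$-cell $[2,A\xrightarrow{f}B\xrightarrow{g}C]\Rightarrow[1,A\xrightarrow{g\circ f}C]$ in $\P_{\C}(A,C)$. By the construction of $\P_{\C}$ (the Grothendieck construction on the bar-type cosimplicial set of Step 2 in Proposition-Definition \ref{path-bicat}), the hom-category $\P_{\C}(A,C)$ is posetal and carries the length functor $\le_{AC}:\P_{\C}(A,C)\to\Delta$. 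The two chains in question have lengths $2$ and $1$, and the displayed $2$-cell is exactly the unique lift to $\P_{\C}(A,C)$ of the unique nondecreasing surjection $2\xrightarrow{!}1$ in $\Delta$; concretely it is the face datum that performs the composition $g\circ f$ in $\C$. Uniqueness is immediate from posetality, so the cell is canonical.

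Next I would assemble the span. Since $F=(F,\varphi)$ is a colax morphism of Bénabou, each component $F_{AC}:\P_{\C}(A,C)\to\M(FA,FC)$ is an honest functor between hom-categories (the colaxity lives only in the compositor $\varphi$), so applying $F_{AC}$ to the canonical $2$-cell gives the right-hand leg $F_{AC}([2,\dots])\Rightarrow F_{AC}([1,\dots])$ in $\M(U,W)$. The left-hand leg is the colaxity component $\varphi(A,B,C)(t,s)$, whose source is by definition $F_{AC}(t\otimes s)$; and since $t\otimes s=[2,A\xrightarrow{f}B\xrightarrow{g}C]$ this source is precisely the apex $F_{AC}([2,\dots])$. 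Thus both legs emanate from the common apex $F_{AC}([2,\dots])$ and the span in $\M(U,W)$ is well-formed exactly as displayed.

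Third I would invoke the Segal hypothesis. When $\W$ is a class of homotopy $2$-equivalences and $(\C,F)$ is a Segal path-object, $\varphi(A,B,C)(t,s)$ lies in $\W$ and is therefore weakly invertible; choosing any weak inverse $\psi$ and pasting it vertically with $F_{AC}(2\xrightarrow{!}1)$ yields a $2$-cell $F_{BC}(t)\otimes F_{AB}(s)\Rightarrow F_{AC}([1,g\circ f])$, which is the quasi-composition obtained by running the span left to right. This is the literal content of the observation.

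The delicate point, which I expect to be the main obstacle, is the well-definedness of this quasi-composition up to homotopy. The data $(\M,\W)$ is only a bicategory with a class $\W$ satisfying $3$-out-of-$2$ and horizontal stability (Definition \ref{base_enrich}), not a full model structure, so ``the space of weak inverses is contractible'' has no literal meaning internally in $\M$. My plan is to show that any two weak inverses $\psi,\psi'$ of $\varphi(A,B,C)(t,s)$ differ by a $\W$-cell: from $\psi\star\varphi,\ \psi'\star\varphi\in\W$ together with the vertical $3$-out-of-$2$ property one extracts a $\W$-cell comparing the two resulting composites, so the two quasi-compositions are homotopic. The conceptually clean home for the strongest form of the claim is the secondary localization $\W^{-1}\M$ of Proposition \ref{reduc}: there $\varphi(A,B,C)(t,s)$ becomes a genuine isomorphism, the span is inverted on the nose, and $F_{BC}(t)\otimes F_{AB}(s)\to F_{AC}([1,g\circ f])$ is a canonical $2$-cell; the up-to-homotopy composition in $\M$ is then its image, and the required homotopy-uniqueness is precisely the statement that $\W^{-1}\M$ is well-defined. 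I would therefore defer the ``contractible space of composites'' assertion to $\W^{-1}\M$ rather than attempt to establish contractibility internally in $\M$.
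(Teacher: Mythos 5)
Your steps 1--3 reproduce exactly the paper's own justification: this Observation carries no separate proof in the paper --- the posetal uniqueness of the cell over $2\xrightarrow{!}1$, the functoriality of the component $F_{AC}$, and the identification of the apex $F_{AC}(t\otimes s)=F_{AC}([2,A\xrightarrow{f}B\xrightarrow{g}C])$ with the source of the colaxity map \emph{are} the whole argument, and you have all of them right. Your reading of the final paragraphs is also faithful: the paper explicitly says one ``has to assume'' that the weak inverses are homotopy equivalent \emph{in some sense}, so contractibility of the space of quasi-compositions is a hypothesis on the ambient homotopy theory, not a consequence of Definition \ref{base_enrich}; your deferral to the secondary localization $\W^{-1}\M$ of Proposition \ref{reduc} is precisely the mechanism the paper itself supplies (the \emph{reduction} of a Segal point) for making the composite canonical.

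One sentence in your fourth step would fail if taken literally: from $\psi\star\varphi\in\W$ and $\psi'\star\varphi\in\W$ together with $\varphi\in\W$, the vertical $3$-out-of-$2$ property yields only that $\psi$ and $\psi'$ themselves lie in $\W$; it cannot ``extract a $\W$-cell comparing the two resulting composites'', because $\psi$ and $\psi'$ are parallel $2$-cells of the bicategory $\M$ and $\M$ has no $3$-cells --- the only internal comparison available between parallel $2$-cells is equality, which need not hold. This is exactly why the paper phrases the uniqueness as an assumption rather than a theorem in $(\M,\W)$. Since you yourself observe that contractibility has no literal internal meaning and route the canonical statement through $\W^{-1}\M$, where $\varphi(A,B,C)(t,s)$ becomes genuinely invertible and the composite is unique on the nose, the slip does not affect your conclusion; but the $3$-out-of-$2$ sentence should be weakened to the (correct and still useful) claim that every weak inverse of a $\W$-cell is again a $\W$-cell.
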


\begin{rmk} \ \
\renewcommand\labelenumi{\alph{enumi})} 
\begin{enumerate}
\item For every object $U$ of $\M$, denote by $F^{-1}(U)$  the set  of objects of $\C$ over $U$ \emph{via} $F$. If $F^{-1}(U)$ is nonempty consider the full subcategory $\C_U$ of $\C$, corresponding to the ``restriction'' of $\C$ to $F^{-1}(U)$. Then $F$ gives a \textbf{`foliation' of $\C$} of `leaves' $\C_U$. We get by functoriality a canonical injection $\P_{\C_U} \hookrightarrow \P_{\C}$ and the composition by $F$ gives a $\C_U$-point of $(\M_{UU}, \W_{UU})$. \ \\
\item We see that a $\C$-point of a bicategory $(\M,\W)$   is a \emph{`moduli'}\footnote{or is ``generated'' by $\C_i$} of `bimodules' between $\C_i$-points of some monoidal bases of enrichment  $(\M_i,\W_i)$. As one can see if we start with a monoidal category $(\M,\W)$, all object of $\C$ will be over the same object, say $\ast$, with $\Hom(\ast,\ast)= \M$. \ \\ 
\item We've used the terminology of foliation theory because each $\C_U$ can be an algebraic leaf of a foliated manifold $\C$. In that case $\C_U$ is determined by a collection of rings satisfying a (co)-descent condition. We will have a path-object of the bicategory \textbf{Bim} of rings, bimodules and morphism of bimodules (see \cite{Ben2}). Here again we will need a descent theory for path-objects.\ \\
\item In \textbf{Bim} we have both commutative and noncommutative rings so it appears to be a good place where both commutative and noncommutative geometry meet. Then the study of path-objects of \textbf{Bim} (and its higher versions) needs to be considered seriously. 
\end{enumerate}
\end{rmk}
\begin{obs} \ \\
\indent The collection of $\C$-points of $(\M, \W)$ forms naturally a bicategory $\M_{\W}^{+}(\C)=\tx{Bicat}[\W](\P_{\C},\M)$, of $\W$-colax morphisms, transformations and modifications. In fact, in Bicat  one has an \emph{internal colax-$\Hom$} between any two bicategories. In particular we have a `colax-Yoneda' functor (of points)\footnote{This justifies our terminology of `$\C$-points'} $\tx{Bicat}_{colax}(-,\M)$. We recall briefly this bicategorical structure on $\M_{\W}^{+}(\C)$ as follows.  
\renewcommand\labelenumi{\alph{enumi})} 
\begin{enumerate}
\item $Ob(\M_{\W}^{+}(\C))= \{F : \P_{\C} \longrightarrow \M \}$, the collection of $\W$-colax morphisms of Bénabou between $\P_{\C}$ and $\M$,
\item For every pair $(F,G)$ of $\W$-colax morphisms, a $1$-cell  $\sigma: F \to G$ is a \underline{transformation} of morphisms of bicategories 
\[
\xy
(0,0)*+{\P_{\C}}="A";
(30,0)*+{\M}="C";
{\ar@/_1.3pc/_{G}"A";"C"};
{\ar@/^1.3pc/^{F}"A";"C"};
{\ar@{=>}_{\sigma}(15,4);(15,-4)};
\endxy.
\]

\item For every pair $(\sigma_1,\sigma_2)$ of $1$-cells, a $2$-cell $\Gamma : \sigma_1 \to \sigma_2$ is a \underline{modification} of transformations :
\[
\xy
(0,15)*{}; 
(0,-15)*{}; 
(0,8)*{}="A"; 
(0,-8)*{}="B"; 
{\ar@{=>}@/_1pc/ "A"+(-4,1) ; "B"+(-3,0)}; 
{\ar@{=}@/_1pc/_{\sigma_1} "A"+(-4,1) ; "B"+(-4,1)}; 
{\ar@{=>}@/^1pc/ "A"+(4,1) ; "B"+(3,0)}; 
{\ar@{=}@/^1pc/^{\sigma_2} "A"+(4,1) ; "B"+(4,1)}; 
{\ar@3{->}^{\Gamma} (-6,0)*{} ; (6,0)*+{}}; 
(-15,0)*+{\P_{\C}}="1"; 
(15,0)*+{\M}="2"; 
{\ar@/^2.75pc/^{F} "1";"2"}; 
{\ar@/_2.75pc/_{G} "1";"2"}; 
\endxy.
\]
\end{enumerate}
The definitions of transformations and modifications are recalled in section \ref{morph}.\\
\end{obs}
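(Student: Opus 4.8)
The plan is to exhibit $\M_{\W}^{+}(\C)$ as a \emph{full sub-bicategory} of the colax functor bicategory $\tx{Bicat}_{colax}(\P_{\C},\M)$, so that the only thing genuinely requiring verification is the existence of the latter — the internal colax-$\Hom$ in $\tx{Bicat}$ asserted in the observation — together with the remark that the $\W$-colax condition is a condition on objects alone. This keeps the present statement to a naming of objects, $1$-cells and $2$-cells, while the structural work is isolated in one place.

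First I would construct the ambient bicategory $\tx{Bicat}_{colax}(\P_{\C},\M)$ whose objects are \emph{all} colax morphisms $F:\P_{\C}\to\M$, whose $1$-cells are transformations, and whose $2$-cells are modifications. The data to specify are: vertical composition of modifications and an identity modification on each transformation (so that each $\tx{Bicat}_{colax}(\P_{\C},\M)(F,G)$ is a category); horizontal composition of transformations, built from whiskering and the composition of $\M$; identity transformations $\Id_F$; and the associativity and left/right unit constraints. One then checks the bicategory axioms: that horizontal composition is functorial in each variable and satisfies the interchange law with vertical composition, and that the associator and unitors are invertible modifications fulfilling the pentagon and triangle identities. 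Almost all of this is forced pointwise by the corresponding coherence data of $\M$; the real content is compatibility with the colaxity $2$-cells.

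The step I expect to be the main obstacle is verifying that the horizontal composite of two transformations of colax morphisms is again a transformation of colax morphisms, and that this composite is associative up to an invertible modification. Concretely, given $\sigma:F\to G$ and $\tau:G\to H$ one must produce the component $2$-cells of $\tau\circ\sigma$ and check the single coherence axiom of a transformation by pasting it against the colaxity maps $\varphi$ of $F$, $G$, $H$; this is the familiar but bookkeeping-heavy argument that makes the colax functor bicategory work, and where the direction of the colax cells (as opposed to lax) must be tracked carefully. Since the precise definitions of transformation and modification are recalled only in \textsl{section} \ref{morph}, the economical route is to carry out this construction there and invoke it here.

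Finally, I would observe that the $\W$-colax condition of Definition \ref{p-object} — that every colaxity $2$-cell $\varphi(A,B,C)(t,s)$ and every $\varphi_A$ lies in $\W$ — constrains only the \emph{objects} $F$ of $\tx{Bicat}_{colax}(\P_{\C},\M)$, and imposes no restriction on transformations or modifications. Hence $\M_{\W}^{+}(\C)=\tx{Bicat}[\W](\P_{\C},\M)$ is precisely the full sub-bicategory spanned by the $\W$-colax morphisms. A full sub-bicategory of a bicategory is again a bicategory: its Hom-categories are the full Hom-categories of the ambient structure, the identity and composition $1$-cells land among $\W$-colax objects by hypothesis, and the associator, unitors, pentagon and triangle identities are simply inherited. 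This gives the asserted bicategorical structure on $\M_{\W}^{+}(\C)$, completing the verification.
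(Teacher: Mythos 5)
Your proposal is correct and takes essentially the same route as the paper: the observation likewise obtains $\M_{\W}^{+}(\C)$ from the internal colax-$\Hom$ $\tx{Bicat}_{colax}(\P_{\C},\M)$ of colax morphisms, transformations and modifications (the latter recalled in section \ref{morph}), with the $\W$-colax condition of Definition \ref{p-object} constraining objects only, so that $\tx{Bicat}[\W](\P_{\C},\M)$ is the full sub-bicategory you describe. The paper states this without carrying out the coherence bookkeeping; your outline of the composite of transformations pasted against the colaxity cells just fills in the standard details it takes as known.
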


\paragraph{The coarse category}\label{coarse}\ \\
\indent For any nonempty set $X$ we denote by $\ol{X}$ the \textbf{coarse category}\footnote{The symbol `$-$' means here that we put one ``link'' between any two elements of $X$} associated to $X$. This is the category having $X$ as set of objects and with exactly one morphism between two objects. Some authors called it the \emph{chaotic category} or the \emph{indiscrete category}. If $A$ and $B$ are elements of $X$ we will denote by $(A,B)$ the unique arrow  in $\ol{X}$ from $A$ to $B$. In particular $(A,A)$ is the identity morphism of $A$.
\begin{rmk}\label{rmk-coar} \ \
\renewcommand\labelenumi{\alph{enumi})}
\begin{enumerate}
\item One may observe that $\ol{X}$ looks like EG for some group G. In fact G is the free group associated to the set $X \times X$ quotiented by the relation of composition and unity. As we shall see in a moment EG is a G-category.
\item $\ol{X}$ is  a \emph{groupoid} and one can observe that this construction is \underline{functorial}. So we have a `coarse' functor:
$$ \ol{[~~~]} :\emph{\tx{Set}} \to \emph{\tx{Gpds}}.$$
\item When $X$ has only one element, say $X=\{A\}$, $\ol{X}$ consist of the object $A$ with the identity $1_{A}$, hence $\ol{X} \cong \bf{1}$. The Proposition 1 gives an monoidal isomorphism between  $\P_{\ol{X}}(A,A)$ and $(\Delta,+,0)$.
\end{enumerate}
\end{rmk}
\begin{term}
For any nonempty set $X$ we will simply say \textbf{$\ol{X}$-point} or \textbf{$\ol{X}$-module} of $(\M,\W)$ for a path-object $(\ol{X},F)$ of $(\M,\W)$.
\end{term}
We will write $\M_{\W}^{+}(X) := \tx{Bicat}[\W](\P_{\ol{X}},\M)$, for the bicategory of $\W$-colax morphisms from $\P_{\ol{X}}$  to $\M$.\\

\begin{obs}\
\begin{enumerate}
\item  For a set $X$, we've considered the coarse category $\ol{X}$ which is a groupoid, but one may consider any \emph{preorder}. Recall that  by preorder we mean a category in which there is at most one morphism between any two objects. For preorders $R$,  The $R$-points of $(\M,\W)$  are important because in some sense they `generate' the general $\C$-points  for arbitrary small categories $\C$.
This comes from the \emph{nerve} construction of a small category.\\

Recall that for a category $\C$ one defines the nerve of $\C$ to be the following functor :
$$ \Nv(\C):(\Delta^{+})^{op} \to Set $$
$$ n \mapsto \Hom([n],\C)$$
where $[n]$ is the preorder with $n$ objects. Explicitely  $[n]$ is  the category defined as follows.\\
Take $Ob([n])=\{0,1,\cdots,n \}$ the set of the first $n+1$ natural numbers and
\begin{equation*}
[n](i,j) =
  \begin{cases}
     \{(i,j) \} & \text{if $i<j$} \\
     \{\Id_i=(i,i) \}  & \text{if $i=j$ }\\
     \varnothing & \text{if $i>j$ }
  \end{cases}
\end{equation*}
The composition is the obvious one.\\

The set $\Nv(\C)_n=\Hom([n],\C)$ is the set of $n$-composable arrows of $\C$ trough $(n+1)$ objects $A_0,...,A_n$ :
$$A_0 \xrightarrow{f_1} \cdots A_{i-1} \xrightarrow{f_{i}} A_{i} \to \cdots \xrightarrow{f_n} A_n.$$
Each element of $\Nv(\C)_n$ is called a \emph{$n$-simplex} of $\C$. The $n$-simplices of $\C$ from $A_0$ to $A_n$ are exactly the $1$-cells (of length $n$) in $\P_{\C}(A_0,A_n)$.

It's important to notice that for every $n$-simplex $r$ of $\C$, $i.e$ a functor $r:[n]\to \C$, the image of $r$ 
$$A_0 \xrightarrow{f_1} \cdots A_{i-1} \xrightarrow{f_{i}} A_{i} \to \cdots \xrightarrow{f_n} A_n$$ is a category $[r]$ which is a copy of $[n]$, described as follows.\\
$Ob([r])=\{A_0,\cdots, A_n\}$ and 
\begin{equation*}
[r](A_i,A_j) =
  \begin{cases}
     \{f_j \circ \cdots \circ f_i \} & \text{if $i<j$} \\
     \{\Id_{A_i} \}  & \text{if $i=j$ }\\
     \varnothing & \text{if $i>j$ }
  \end{cases}
\end{equation*}

Now if all the arrows $f_0, \cdots ,f_n$ are invertible we can extends $[r]$ to a coarse category by adding the inverse of each $f_i$  or by formally adding the inverses of the $f_i$. This will be the case where $\C$ is a groupoid or by localizing $\C$ with respect to some class of morphisms $\S$.\\

Since the construction of the path-bicategory is functorial it follows that for every $n$-simplex $r$ of $\C$, $i.e$ a functor $r:[n]\to \C$, we have a strict homomorphism $\P_r : \P_{[n]} \to \P_{\C}$. Therefore any $\C$-point $F: \P_{\C} \to \M$ gives by pullback an $[n]$-point  

\end{enumerate}
\end{obs}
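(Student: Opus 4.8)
The plan is to exhibit the pullback explicitly as the composite $r^{*}F := F \circ \P_r : \P_{[n]} \to \M$ and to check that it satisfies Definition \ref{p-object}, i.e. that it is a $\W$-colax morphism of Bénabou. First I would recall from part iii) of Proposition-Definition \ref{path-bicat} that the functor $r : [n] \to \C$ induces a \emph{strict} homomorphism $\P_r : \P_{[n]} \to \P_{\C}$; being strict, all of its structural $2$-cells are identities, and in particular it preserves horizontal composition of $1$-cells strictly, so that $\P_r(t \otimes s) = \P_r(t) \otimes \P_r(s)$ for every composable pair.

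Next I would compute the coherence data of the composite $F \circ \P_r$. Since $F$ is colax and $\P_r$ is a strict homomorphism, the standard composition law for morphisms of bicategories produces a colax morphism whose colaxity $2$-cells are obtained by precomposing the colaxity family of $F$ with $\P_r$ (the coherence cells of $\P_r$ being identities). Concretely, for a composable pair $(t,s)$ of $1$-cells of $\P_{[n]}$ the colaxity map of $r^{*}F$ is
$$
(F\circ \P_r)(t\otimes s) = F\big(\P_r(t)\otimes \P_r(s)\big) \xrightarrow{\;\varphi(\P_r t,\,\P_r s)\;} F(\P_r t)\otimes F(\P_r s),
$$
which is exactly a colaxity map of $F$ evaluated at the image $1$-cells $\P_r t,\ \P_r s$. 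Similarly, at an object $i$ of $[n]$ the unit colaxity map of $r^{*}F$ is the unit map $\varphi_{r(i)}$ of $F$ at the object $r(i)$ of $\C$. Thus precomposition by the strict homomorphism $\P_r$ introduces no new coherence cells: it merely restricts the colaxity family of $F$ along $\P_r$.

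Finally I would invoke the hypothesis that $F$ is a $\W$-colax morphism. By Definition \ref{p-object} every colaxity map $\varphi(A,B,C)(t,s)$ and every unit map $\varphi_{A}$ of $F$ lies in $\W$. Since, by the previous paragraph, the colaxity maps of $r^{*}F$ form a subfamily of those of $F$ (indexed by the $\P_r$-images of the cells of $\P_{[n]}$), they all lie in $\W$ as well. Hence $r^{*}F = F\circ \P_r$ is a $\W$-colax morphism, i.e. an $[n]$-point of $(\M,\W)$, as claimed.

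The only genuinely non-formal step is the coherence computation of the middle paragraph: one must verify that the composite of a colax morphism with a strict homomorphism is again colax and that its colaxity data is precisely the restriction of $\varphi$ along $\P_r$. Once this identification is made the $\W$-membership is immediate, and the same argument shows more generally that precomposing any $\W$-colax morphism with any strict homomorphism $\P_{\Sigma}$ (for a functor $\Sigma$ between small categories) again yields a $\W$-colax morphism. This is the natural functoriality of path-objects under change of the indexing category, which also underlies the base-change operations mentioned in the introduction.
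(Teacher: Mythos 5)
Your proposal is correct and is precisely the argument the paper intends: the observation is stated as an immediate consequence of the functoriality of $\P_{[-]}$ (Proposition-Definition \ref{path-bicat}, part iii), and your elaboration --- defining the pullback as $F \circ \P_r$, noting that strictness of $\P_r$ makes the colaxity data of the composite exactly the restriction of the colaxity family $\varphi$ of $F$ along $\P_r$, and concluding membership in $\W$ --- supplies exactly the details the paper leaves implicit. The approach is the same as the paper's, just written out in full.
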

\section{Examples of path-objects}\label{examples}
\subsection{Up-to-homotopy monoids and Simplicial objects}
\begin{df}
Let $\N$, $\N'$ be two monoidal categories. A \emph{colax} monoidal functor $\N' \to \N$ consists of a functor $Y: \N' \to \N$ together with maps 
$$ \xi_{AB}: Y(A\otimes B) \to Y(A) \otimes Y(B)$$
$$\xi_{0}: Y(I) \to I $$
$(A,B \in \N')$, satisfying naturality and coherences axioms.
\end{df}
Here $\otimes$ and $I$ denote the tensor operation and unit object in both monoidal categories $\N'$ and $\N$. We refer the reader to \cite{Lei2} for the  coherences axioms of colax functors.\\

The following defintion is due to Leinster. The generalization of this definition was the motivation of this work.
\begin{df}\label{h-mon}
Let $\M$ be a monoidal category equipped with a class of \emph{homotopy equivalences} $\W$ such that the pair $(\M,\W)$ is a base of enrichment. A \emph{homotopy monoid} in $\M$ is colax monoidal functor 
$$(Y, \xi) : \Delta \to \M $$
for which the maps $\xi_0$, $\xi_{mn}$ are in $\W$ for every $m,n$ in $\Delta$.
\end{df}
\begin{df}
Let $\M$ be a category. A simplicial object of $\M$ is a functor $Y:(\Delta^{+})^{op} \to \M$.  
\end{df}
\begin{rmk}
In this definition $\M$ may be a \textbf{higher category}. For example in \cite{HS}, Simpson and Hirschowitz use simplicial objects of some  higher $($model$)$ category to define inductively Segal $n$-categories. 
\end{rmk}
A special case of simplicial object which is relevant to our path-objects comes when the ambiant category $\M$ is a category with finite products with an empty-product object  $1$, which is terminal by definition. In this case $\M$ turns to be a \textbf{cartesian} monoidal category $(\M,\times ,1$).\\

The following proposition is due to Leinster \cite{Lei2}.  
\begin{prop}\label{leiprop}
Let $(\M,\times)$ be a category with finite products. Then there is an isomorphism of categories
$$ Colax((\Delta,+,0),(\M,\times,1)) \cong [(\Delta^{+})^{op},\M ].$$ 
\end{prop}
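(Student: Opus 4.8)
The plan is to exhibit the isomorphism by constructing an explicit pair of mutually inverse functors, say $\Phi : \mathrm{Colax}((\Delta,+,0),(\M,\times,1)) \to [(\Delta^{+})^{op},\M]$ and $\Psi$ going back, and then checking that they are inverse both on objects (colax functors versus simplicial objects) and on morphisms (colax monoidal transformations versus simplicial maps). The driving observation is that because the target monoidal structure is the \emph{cartesian} one, the colaxity data simplifies drastically: a map $\xi_{m,n} : Y(m+n)\to Y(m)\times Y(n)$ into a product is the same datum as the pair of its components $Y(m+n)\to Y(m)$ and $Y(m+n)\to Y(n)$, while the unit comparison $\xi_0 : Y(0)\to 1$ is the \emph{unique} map to the terminal object. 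Thus a colax monoidal functor into $(\M,\times,1)$ is precisely a functor $Y:\Delta\to\M$ together with a coherent family of ``projection'' maps, where ``coherent'' unpacks, via the universal property of the product, into the colax naturality, associativity and unit axioms.

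With this reformulation I would set $X_n := Y(n+1)$, matching the $(n+1)$-element ordinal $[n]$ of $\Delta^{+}$ with the object $n+1$ of the augmented $\Delta$. The simplicial structure is then produced as follows. The outer face operators $d_0,d_n : X_n\to X_{n-1}$ are the colaxity projections $Y(n+1)\to Y(n)$ obtained by splitting off the first, respectively the last, summand (i.e.\ from $\xi_{1,n}$ and $\xi_{n,1}$); the inner faces $d_i$ with $0<i<n$ are the images under $Y$ of the ordinal-sum surjections $n+1\to n$ collapsing two adjacent elements; and the degeneracies $s_i : X_n\to X_{n+1}$ are the images under $Y$ of the coface injections $n+1\to n+2$. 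Note that the variance is forced to flip: faces point ``downward'' and are supplied by the colaxity maps (which always emanate from $Y(m+n)$) together with $Y$ of the codegeneracies, whereas degeneracies point ``upward'' and are supplied by $Y$ of the injections. The core computation is to verify that the simplicial identities among the $d_i,s_i$ are \emph{equivalent} to the package of colax axioms: the associativity and naturality of $\xi$ account for the relations among faces and the mixed face–degeneracy relations, while functoriality of $Y$ accounts for the relations among degeneracies. The reverse functor $\Psi$ reads the same dictionary backwards—reconstructing $Y$ on objects by $Y(n+1)=X_n$, on codegeneracies and injections from the $d_\bullet,s_\bullet$, and the colaxity projections from iterated faces—and one then verifies $\Phi\Psi=\mathrm{id}$ and $\Psi\Phi=\mathrm{id}$ by inspection once the dictionary is fixed.

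For the transformation level, a colax monoidal natural transformation $\theta:Y\Rightarrow Y'$ is a family $\theta_n:Y(n)\to Y'(n)$ commuting both with functoriality and with the colaxity projections; restricting to $\{\theta_{n+1}\}$ yields exactly a morphism of simplicial objects, and conversely every simplicial map extends uniquely (using the projection description) to such a transformation. This upgrades the object-level bijection to a genuine isomorphism of categories.

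The step I expect to be the main obstacle is the treatment of the monoidal unit, that is, the empty ordinal $0$ and the comparison $\xi_0$. Since $0$ is only \emph{initial} in $\Delta$ (there is no morphism $n\to 0$ for $n\geq 1$), the value $Y(0)$ is a priori free data, carrying only the forced map $Y(0)\to 1$ and the colaxity maps $Y(n)\to Y(0)$; if left unconstrained it encodes an \emph{augmentation}, so that colax functors on the augmented $\Delta$ would match \emph{augmented} simplicial objects rather than the plain ones on $(\Delta^{+})^{op}$. The crux is therefore to show that the unit axioms together with the invertibility of $\xi_0$ (equivalently $Y(0)\cong 1$) pin down $Y(0)$ as the terminal object and make all unit-related comparison maps determined, which is precisely what removes the augmentation and identifies these colax functors with $[(\Delta^{+})^{op},\M]$. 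The remaining combinatorial bookkeeping—matching each simplicial identity to the correct instance of a colax axiom, and reconciling the off-by-one discrepancy between the counts of cofaces and codegeneracies on one side and faces and degeneracies on the other—is routine but must be carried out carefully.
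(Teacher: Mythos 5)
Your overall strategy is the right one, and indeed it is essentially the standard argument (note the paper itself gives no proof of this proposition; it is quoted from Leinster's \cite{Lei2}): in a cartesian target the colaxity map $\xi_{m,n}:Y(m+n)\to Y(m)\times Y(n)$ is just a pair of projections and $\xi_0$ carries no data, so the whole statement reduces to a generators-and-relations dictionary. However, your dictionary has a fatal off-by-one: the correspondence must match the $(n+1)$-element object $[n]$ of $\Delta^{+}$ with the $n$-element ordinal of the augmented $\Delta$, i.e. $X_n:=Y(n)$, not $X_n:=Y(n+1)$. With your shift the counts do not close up: a simplicial object has $n+1$ faces $d_0,\dots,d_n:X_n\to X_{n-1}$, of which $n-1$ are inner, but there are $n$ monotone surjections $n+1\to n$ (an $(n+1)$-chain has $n$ adjacent pairs), so after adding your two outer colax projections you have $n+2$ candidate faces for $n+1$ slots; dually there are $n+2$ injections $n+1\hookrightarrow n+2$ but only $n+1$ degeneracies $s_0,\dots,s_n$. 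One surjection and one injection per level have nowhere to go, so the functor $\Phi$ you describe cannot be bijective on objects and no choice of "bookkeeping" repairs it. With $X_n=Y(n)$ everything matches exactly: the $n-1$ surjections $n\to n-1$ give the inner faces, the end projections extracted from $\xi_{1,n-1}$ and $\xi_{n-1,1}$ give $d_0$ and $d_n$, and the $n+1$ injections $n\hookrightarrow n+1$ (inserting the unit $0\to 1$) give the degeneracies.

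The step you single out as the crux — forcing $Y(0)\cong 1$ via "invertibility of $\xi_0$" — is not merely unproved, it is false, and if it were needed the proposition would fail. Nothing in the definition of a colax monoidal functor (recalled just before the proposition) makes $\xi_0$ invertible: for instance the constant functor at any object $A$, with diagonal colaxity $A\to A\times A$ and the unique map $A\to 1$, is a perfectly good colax monoidal functor with $Y(0)=A$. Requiring $Y(0)\cong 1$ would exclude it from the source category and destroy the claimed isomorphism. The resolution is that under the correct shift there is no augmentation to remove in the first place: $Y(0)$ \emph{is} the object $X_0$ of $0$-simplices (the constant-$A$ example corresponds to the constant simplicial object at $A$). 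The unit axioms force the component $Y(n)=Y(0+n)\to Y(n)$ of $\xi_{0,n}$ to be the identity, and the residual components $Y(n)\to Y(0)$ become the iterated outer faces $X_n\to X_0$, dual to the vertex maps $[0]\to[n]$. This is also consistent with the homotopy-monoid picture the proposition feeds into, where $\xi_0:X_0\to 1$ is only required to be a weak equivalence, not an isomorphism. So your difficulty with $Y(0)$ is a symptom of the misaligned indexing rather than a genuine obstacle; once you set $X_n=Y(n)$, the verification of the simplicial identities against the colax axioms proceeds as you outline.
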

\begin{rmk}
$Colax((\Delta,+,0),(\M,\times,1))$  represents the category of colax monoidal functors. 
\end{rmk}

In what follows we are going to rephrase the proposition and the definitions given above in term of points of $\M$. We will use the following notations.\\
$\bf{1}=\{\o,\o \xrightarrow{Id_{\o}} \o \}=$ the unit category.\\
$Iso(\M)=$ the class of all invertible morphisms of $\M$.\\
$Mor(\M)=$ the class of all morphisms of $\M$.\\
As usual, since Bénabou, we will identify $\M$ with a bicategory with one object (See Example \ref{mon} of the Appendix A).
\begin{prop}\label{hotmon}
Let $(\M,\otimes,I)$ be a monoidal category. 
\renewcommand\labelenumi{\roman{enumi})}
\begin{enumerate}
\item We have an equivalence between the following data:
\begin{itemize}
\item a $\bf{1}$-point of $(\M,Iso(\M))$ \it{i.e} an object of $\M_{Iso(\M)}^{+}(\bf{1})$,
\item a monoid of $\M$.
\end{itemize}
\item Assume that $\M$ is equipped with a class $\W$ of morphisms called homotopy equivalences, such that $(\M,\W)$ is a base of enrichment. Then we have an equivalence between the following data
\begin{itemize}
\item a $\bf{1}$-point of $(\M,\W)$ \it{i.e} an object of $\M_{\W}^{+}(\bf{1})$,
\item an up-to homotopy monoid in the sense of Leinster \cite{Lei3}.
\end{itemize} 
\item If $\M$ has finite products, and is considered to be monoidal for the cartesian product, then  we have an equivalence between 
\begin{itemize}
\item a $\bf{1}$-point of $(\M,Mor(\M))$ \it{i.e} an object of $\M_{Mor(\M)}^{+}(\bf{1})$, 
\item a simplicial  object of $\M$.
\end{itemize}
\end{enumerate}
\end{prop}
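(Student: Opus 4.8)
The plan is to unwind the definition of a $\bf{1}$-point in each of the three cases and match it, term by term, against the target notion. Since $\bf{1}\cong\ol{X}$ for $X=\{\o\}$, Proposition-Definition \ref{path-bicat}(ii) gives a monoidal isomorphism $\P_{\bf{1}}(\o,\o)\xrightarrow{\sim}(\Delta,+,0)$. A $\bf{1}$-point of $(\M,\W)$ is a $\W$-colax morphism $F:\P_{\bf{1}}\to\M$; because $\bf{1}$ has a single object, $F$ is determined by what it does on the single hom-category $\P_{\bf{1}}(\o,\o)$, which is $(\Delta,+,0)$, together with the colaxity data. So the first move is to observe that, under the isomorphism of \ref{path-bicat}(ii), a $\W$-colax morphism $\P_{\bf{1}}\to\M$ is precisely a colax monoidal functor $(Y,\xi):(\Delta,+,0)\to\M$ whose colaxity maps $\xi_{mn}$ and $\xi_0$ lie in $\W$. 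Here the horizontal composition in $\M$ (a one-object bicategory) is the tensor $\otimes$, the colaxity map $\varphi(\o,\o,\o)(t,s)$ corresponds to $\xi_{mn}$ where $m,n$ are the lengths of $t,s$, and $\varphi_{\o}$ corresponds to $\xi_0$. This single translation does most of the work; each of (i)--(iii) is then obtained by specializing $\W$.

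For (ii), I would take $\W$ to be a class of homotopy equivalences making $(\M,\W)$ a base of enrichment, and simply read off Definition \ref{h-mon}: a homotopy monoid in $\M$ is exactly a colax monoidal functor $(Y,\xi):\Delta\to\M$ with all $\xi_0,\xi_{mn}$ in $\W$. By the translation above this is the same datum as a $\bf{1}$-point of $(\M,\W)$, so the equivalence is immediate. For (i), I would specialize $\W=Iso(\M)$: then the condition ``$\xi_{mn},\xi_0\in\W$'' forces all colaxity maps to be invertible, so the colax monoidal functor is in fact a (strong) monoidal functor $\Delta\to\M$. By the universal property of $\Delta$ recalled in the excerpt (Mac Lane \cite{Mac}), strong monoidal functors out of $(\Delta,+,0)$ correspond to monoids in $\M$, the monoid being the image of the object $1$. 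This yields the equivalence with monoids.

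For (iii), I would specialize $\W=Mor(\M)$, the class of all morphisms. Then the condition on the colaxity maps is vacuous, so a $\bf{1}$-point of $(\M,Mor(\M))$ is just an arbitrary colax monoidal functor $(\Delta,+,0)\to(\M,\times,1)$ with no constraint. This is precisely the left-hand side of Proposition \ref{leiprop} (Leinster), namely an object of $Colax((\Delta,+,0),(\M,\times,1))$, and \ref{leiprop} identifies that category with $[(\Delta^{+})^{op},\M]$, the category of simplicial objects of $\M$. Chaining the two identifications gives the desired equivalence. In each case I would also note compatibility on morphisms: transformations and modifications of $\W$-colax morphisms restrict to the appropriate notion of morphism on the other side, so that ``equivalence'' upgrades from a bijection on objects to an equivalence (indeed isomorphism) of the relevant categories, matching the statements.

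The routine verifications---that the coherence axioms for a colax morphism of bicategories specialize correctly to the colax monoidal coherence axioms of \cite{Lei2}, and that naturality of $\xi$ matches naturality of $\varphi$---are where the real bookkeeping lives, but they are mechanical once the dictionary ``horizontal composition $=\otimes$, $\varphi=\xi$, $\P_{\bf{1}}(\o,\o)=\Delta$'' is set up. The main obstacle I anticipate is part (iii): establishing the translation cleanly requires invoking Proposition \ref{leiprop} as a black box, and one must be careful that the cartesian hypothesis on $\M$ is exactly what lets the colaxity maps encode all the face and degeneracy maps of the simplicial object. Getting the indexing right---so that lengths in $\P_{\bf{1}}$ correspond to the objects $n\in\Delta$ and the unique arrows of $\Delta$ correspond to the simplicial operators---is the only genuinely delicate point, and it is handled entirely by \ref{path-bicat}(ii) together with \ref{leiprop}.
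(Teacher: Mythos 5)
Your proposal is correct and follows essentially the same route as the paper: unwind the one-object case so that, via the monoidal isomorphism $\P_{\bf{1}}(\o,\o)\cong(\Delta,+,0)$, a $\W$-colax morphism $\P_{\bf{1}}\to\M$ becomes a colax monoidal functor $\Delta\to\M$ with colaxity maps in $\W$, then specialize $\W$ to read off (i)--(iii), with (iii) via Proposition \ref{leiprop}. The only cosmetic difference is that the paper disposes of (i) as the special case $\W=Iso(\M)$ of (ii), whereas you invoke Mac Lane's universal property of $\Delta$ explicitly---the same fact the paper itself cites.
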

\begin{obs}\ 
\renewcommand\labelenumi{\alph{enumi})}
\begin{enumerate}
\item The assertion (i) is simply a particular case of (ii) when  $\W=Iso(\M)$. The main motivation to consider general classes of \emph{homotopy equivalences} $\W$ other than $Iso(\M)$ is to have a Segal version of enriched categories over monoidal categories. The idea is to  view a monoid of $\M$ as a category enriched over $\M$ with one object and to view an up-to-homotopy monoid of Leinster as the Segal version of it.

The `several objects' case is considered in the upcoming examples. We will call them \textbf{Segal enriched categories}.

\item As pointed out earlier, in the assertion (iii) $\M$ may be a higher category having finite  product. This suggests to  extend the definition  of $\C$-points of $(\M,\W)$ (Definition \ref{p-object}) to a general one where $\C$ and $\M$ are $\infty$-categories. A first attempt would consist to use \textbf{Postnikov systems} (see \cite{BS}) to \textbf{``go down''} to the current situation. We will come back to this later. 
\item Again in the assertion (iii), the category $\M$ may have \textbf{discrete objects} and we can  give a definition of Segal categories or \emph{(weak) internal category object} in $\M$ (see \cite{HS}) in term of $1$-point of $(\M,\W)$. \\
An immediate step is to ask what we will have with general $\ol{X}$-points. This is discussed later.
\end{enumerate}
\end{obs}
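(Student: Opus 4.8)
The Observation is mostly expository, and I would treat its three parts accordingly. Parts (b) and (c) are programmatic: they propose an $\infty$-categorical extension of Definition \ref{p-object} (``going down'' from $\infty$-categories to the present setting via Postnikov systems) and a reading of internal, respectively Segal, category objects in the sense of \cite{HS} as $\bf{1}$-points, together with the question of what general $\ol{X}$-points produce. These announce future work and assert nothing that calls for proof. The sole piece of mathematical content is part (a): that assertion (i) of Proposition \ref{hotmon} is the special case $\W = Iso(\M)$ of assertion (ii). That is the statement I would establish, and I would do so by \emph{specialization}, taking (ii) as already proven rather than reproving the correspondence from scratch.

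First I would record that $(\M, Iso(\M))$ is a base of enrichment --- in fact the smallest one, by the Remark following Definition \ref{base_enrich} --- so that assertion (ii) applies verbatim with $\W = Iso(\M)$. By (ii), a $\bf{1}$-point of $(\M, Iso(\M))$ is then an up-to-homotopy monoid $(Y,\xi) : \Delta \to \M$ in the sense of \cite{Lei3} whose colaxity maps $\xi_{mn}$ and $\xi_0$ lie in $Iso(\M)$, i.e.\ are invertible. It therefore remains only to identify colax monoidal functors $\Delta \to \M$ with \emph{invertible} colaxity constraints with the monoids of $\M$, and to check that this identification is the one named in (i).

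For that I would invert the constraints. A colax monoidal functor $(Y,\xi)$ all of whose $\xi$ are isomorphisms yields the pair $(Y,\xi^{-1})$, and inverting every arrow in the colax associativity and unit diagrams turns the colax coherence axioms for $\xi$ into exactly the lax coherence axioms for $\xi^{-1}$; hence $(Y,\xi^{-1})$ is a strong monoidal functor $(\Delta,+,0) \to \M$. I would then invoke Mac Lane's identification, recalled in the introduction, of (strong) monoidal functors out of $(\Delta,+,0)$ with monoids in $\M$ via the image of the object $1$ (see \cite{Mac}): the monoid attached to $(Y,\xi)$ is $Y(1)$, with multiplication and unit read off from $\xi^{-1}$. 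This is precisely the correspondence of assertion (i), so (i) is the restriction of (ii) to the smallest base. Because $\W = Iso(\M)$ is smallest, no condition beyond invertibility of components is imposed on the $1$-cells and $2$-cells of $\M_{Iso(\M)}^{+}(\bf{1})$, so the equivalence of (ii) genuinely restricts to the equivalence of (i) and not merely to a bijection on objects.

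The only non-formal step --- hence the ``hard part,'' modest as it is --- is the coherence transport: verifying that inverting the colaxity $2$-cells carries the colax associativity and unit axioms to the lax ones and that $Y(1)$ with the induced structure satisfies the monoid axioms. This is a routine diagram chase, but it is where the actual content of part (a) resides; everything around it is formal once (ii) is in hand.
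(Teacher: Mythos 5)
Your proposal is correct and follows essentially the paper's own route: the paper likewise treats parts (b) and (c) as programmatic, and obtains part (a) by specializing the unpacking in the proof of Proposition \ref{hotmon} to $\W = Iso(\M)$, where invertible colaxity maps make $F$ a (strong) monoidal functor out of $(\Delta,+,0)$, which is identified with a monoid of $\M$ via the Mac Lane universal-monoid result recalled in the introduction. The only step you make explicit that the paper leaves tacit is the routine inversion of the colax coherence diagrams, which is exactly where the paper silently relies on \cite{Mac}.
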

\ \\
The proof of the proposition is based on the following two facts:
\begin{enumerate}
\item[$\bullet$]the path-bicategory of $\bf{1}$ `is' $\Delta$ (see Proposition \ref{path-bicat}). 
\item[$\bullet$] bicategories with one object and morphisms between them  are identified with monoidal categories and  the suitable functors of monoidal categories, and vice versa.
\end{enumerate}
\begin{proof}
Let $F$ be $\bf{1}$-point  of $(\M,\W)$. By definition $F$ is  a $\W$-colax morphism of bicategories $F : \P_{\bf{1}} \longrightarrow \M $.\\
As $\P_{\bf{1}}$ is a one-object bicategory, $F$ is entirely determined by the following data:
\begin{enumerate}
\item[$\ast$]a functor $F_{\o\o}: \P_{\bf{1}}(\o,\o) \to \M$ which is the only component of $F$
\item[$\ast$] arrows $F_{\o\o}(t \otimes s) \xrightarrow{\varphi(\o,\o,\o)(t,s)} F_{\o\o}(t) \otimes F_{\o\o}(s) $ in $\W$, for every pair $(t,s)$ in $\P_{\bf{1}}(\o,\o)$,
\item[$\ast$] an arrow $ F_{\o\o}([0,\o]) \xrightarrow{\varphi_{\o}} I$ in $\W$,
\item[$\ast$] coherences on $\varphi(\o,\o,\o)(t,s)$ and $\varphi_{\o}$.
\end{enumerate}
But one can check that these data say exactly that $F_{\o\o}$ is a \textbf{colax monoidal functor}  from \\ 
$(\P_{\bf{1}}(\o,\o),c(\o,\o,\o),[0,\o])$ to $(\M, \otimes, I)$.\\

As remarked previously we have an isomorphism of monoidal categories $$ (\Delta,+,0) \cong (\P_{\bf{1}}(\o,\o),c(\o,\o,\o),[0,\o]).$$ We recall that this isomorphism is determined by the following identifications.\\ 
$ 0 \longleftrightarrow [0,\o].$\\
$ n \longleftrightarrow [n,\underbrace{\o \xrightarrow{Id_{\o}} \o \cdots \o \xrightarrow{Id_{\o}} \o}_{n~~identities}]=s.$\\
$ m \longleftrightarrow [m,\underbrace{\o \xrightarrow{Id_{\o}} \o \cdots \o \xrightarrow{Id_{\o}} \o}_{m~~identities}]=t.$\\
$ (n+m) \longleftrightarrow [n+m,\underbrace{\o \xrightarrow{Id_{\o}}   \cdots  \o}_{n~~identities} \underbrace{\xrightarrow{Id_{\o}} \cdots \o}_{m~~identities}]=c(\o,\o,\o)(t,s)=t \otimes s.$\\
\\
$\{$Coface maps in $\Delta$ $\}$ $\longleftrightarrow$ $\{$Replacing consecutive arrows by their composite $\}$.\\ 
$\{$Codegeneracy maps in $\Delta$ $\}$ $\longleftrightarrow$ $\{$ Adding identities $\}$  (see Appendix B).\\ 
\\

Summing up the above discussions we see that  $F$ is equivalent to a colax monoidal functor from $(\Delta,+,0)$ to $(\M,\otimes,I)$:\\
$\widetilde{F} : \Delta \to \M$,\\
$\varphi_{mn}:\widetilde{F}(m+n) \to \widetilde{F}(m) \otimes \widetilde{F}(n) \in \W$,\\
$\varphi_{0}:\widetilde{F}(0) \to I \in \W$.\\

$\bullet$If $\W$ is a class of homotopy equivalences, we recover the definition  of a homotopy monoid given by Leinster in \cite{Lei3}, which proves the assertion (ii).\\

$\bullet$ If $\W=Mor(\M)$  and $\M$ is cartesian monoidal we get an object of  $Colax((\Delta,+,0),(\M,\times,1))$ and the assertion  (iii) follows from the Proposition 7 above.\\ 

\end{proof}

\begin{rmk}
When we will view what is a morphism of $\C$-points each equivalence will be automatically an equivalence of categories. 
\end{rmk}

\subsection{Classical enriched categories}
\begin{prop}\label{c-enr}
Let $(\M,\otimes,I)$ be a monoidal category and $X$ be a nonempty set. We have an equivalence between the following data
\renewcommand\labelenumi{\roman{enumi})}
\begin{enumerate}
\item an $\ol{X}$-point of $(\M,Iso(\M))$ 
\item an enriched category over $\M$ having $X$ as set of objects.
\end{enumerate}
\end{prop}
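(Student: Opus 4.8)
The plan is to establish an equivalence of data by exhibiting a dictionary between the two sides and verifying that the defining structures and axioms correspond. First I would unwind what an $\ol{X}$-point of $(\M, Iso(\M))$ is: it is a couple $(\ol{X}, F)$ where $F = (F, \varphi): \P_{\ol{X}} \to \M$ is a colax morphism of Bénabou whose colaxity $2$-cells $\varphi(A,B,C)(t,s)$ and $\varphi_A$ all lie in $\W = Iso(\M)$, i.e.\ are \emph{invertible}. This is the key simplification over the general Segal case: since $\M$ is here a monoidal category viewed as a one-object bicategory, its only $2$-cells are morphisms of $\M$, and requiring them in $Iso(\M)$ forces the Segal maps to be genuine isomorphisms. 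So $F$ is effectively a \emph{pseudo}-colax (invertible colaxity) morphism, which is what should reproduce strict composition up to coherent isomorphism, hence an actual enriched category.

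Next I would read off the enriched-category data from $F$. The object set is $Ob(\ol{X}) = X$, as required in (ii). For each pair $(A,B)$ I would set $\C(A,B) := F_{AB}([1, (A,B)])$, the value of $F$ on the unique length-one $1$-cell of $\P_{\ol{X}}(A,B)$ (recall $\ol{X}$ has exactly one arrow $(A,B)$ between any two objects). To extract the composition, I would use the span discussed in the Observation following Definition \ref{p-object}: on objects $A,B,C$ with $t = [1,(B,C)]$ and $s = [1,(A,B)]$, the colaxity map gives
\[
F_{AC}(t \otimes s) \xrightarrow{\varphi(A,B,C)(t,s)} F_{BC}(t) \otimes F_{AB}(s),
\]
while the canonical $2$-cell $2 \xrightarrow{!} 1$ in $\P_{\ol{X}}(A,C)$ maps under $F$ to $F_{AC}(t \otimes s) \to F_{AC}([1,(A,C)])$. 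Since $\varphi(A,B,C)(t,s)$ is now \emph{invertible}, I compose its inverse with this canonical map to define
\[
c_{ABC} \colon \C(B,C) \otimes \C(A,B) \to \C(A,C).
\]
Dually, from $\varphi_A \colon F_{AA}([0,A]) \xrightarrow{\sim} I$ together with the canonical $2$-cell $[0,A] \to [1,(A,A)]$, I obtain the unit $I_A \colon I \to \C(A,A)$.

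The main work, and the step I expect to be the genuine obstacle, is verifying the associativity and unit axioms of the resulting $\M$-category. These must be deduced from the coherence axioms of the colax morphism $F$, namely the hexagon-type compatibility of the $\varphi(A,B,C)(t,s)$ with the associator of $\M$ and the concatenation structure of $\P_{\ol{X}}$, together with the unit coherences involving $\varphi_A$. Concretely, for four objects $A,B,C,D$ one compares the two ways of associating the composites $F(t)\otimes F(s) \otimes F(r)$ against $F_{AD}$ of the triple concatenation; the colax coherence pentagon plus naturality of $\varphi$ with respect to the $2$-cells of $\P_{\ol{X}}$ should yield precisely the associativity pentagon for $c$. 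Because all colaxity maps are isomorphisms, these diagrams can be transported to give strict associativity and identity, rather than merely up-to-homotopy versions. For the converse direction, I would start from an $\M$-enriched category $\D$ with object set $X$ and build $F$ by the free/length construction: send a chain $[n, f_n \circ \cdots \circ f_1]$ to the iterated tensor $\D(A_{n-1},A_n) \otimes \cdots \otimes \D(A_0,A_1)$, use the associators of $\M$ to define $\varphi$ (manifestly invertible), and use the $\M$-composition to define the action of $F$ on the poset maps (the $2$-cells of $\P_{\ol{X}}$). One then checks these two assignments are mutually inverse up to the evident isomorphisms, giving the claimed equivalence. The remaining care is bookkeeping: confirming well-definedness on the posetal categories $\P_{\ol{X}}(A,B)$ and that functoriality of $F$ on $2$-cells matches associativity of $\D$, which is routine but must be done diagram by diagram.
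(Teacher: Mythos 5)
Your proposal takes essentially the same route as the paper's own proof: you extract the hom-objects from the length-one chains, define $c_{ABC}$ by composing the image of the canonical $2$-cell $[2,(A,B,C)] \xrightarrow{!} [1,(A,C)]$ with ${\varphi(A,B,C)(t,s)}^{-1}$ and the unit via ${\varphi_A}^{-1}$ followed by $F$ applied to $[0,A] \to [1,(A,A)]$, and conversely you build the colax homomorphism from an $\M$-category by sending a chain to the iterated (front-parenthesized) tensor of hom-objects, taking the associators of $\M$ as the (invertible) colaxity maps and sending the generating $2$-cells of $\P_{\ol{X}}(A,B)$ (cofaces and codegeneracies, the paper's types $(\ast)$ and $(\ast\ast)$) to composition and unit-insertion maps, exactly as the paper does. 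The only difference is one of emphasis: you flag the associativity/unit verification as the main obstacle and sketch how it follows from the colax coherence diagrams, whereas the paper dispatches it in one line as a direct consequence of the axioms (M.1)--(M.2) for the morphism $F$ --- which is precisely the check you describe.
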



\begin{proof}
Let $F : \P_{\ol{X}} \longrightarrow \M $  be an $\ol{X}$-point of $(\M,Iso(\M))$. By definition of a Path-object (Definition $4.$), $F$ is a $Iso(\M)$-colax morphism, which means that the  maps 
$$F_{AC}(t \otimes s) \xrightarrow{\varphi(A,B,C)(t,s)} F_{BC}(t) \otimes F_{AB}(s),~~~~~ \  F_{AA}([0,A]) \xrightarrow{\varphi_{A}} I$$
are invertible for every elements $A,B,C$ of $X$. It follows that $F$ is a (colax) \textbf{homomorphism} in the sense of Bénabou \footnote{For homomorphism being colax or lax is almost the same thing}.\\ 

As $F$ is a morphism of bicategories we have a functor $F_{AB}:\P_{\ol{X}}(A,B) \longrightarrow \M$,  for each pair of elements $(A,B)$.
\bigskip

We are going to form a category  enriched over $\M$, in the classical sense (see \cite{Ke}), denoted by $\M^{X}_{F}$.
\renewcommand\labelenumi{\roman{enumi})}
\begin{enumerate}
\item Put  $Ob(\M^{X}_{F})= X$.
\item For every pair $(A,B) $ of elements of $X$, the hom-object is $\M^{X}_{F}(A,B):= F_{AB}([1,(A,B)]) \in Ob(\M)$. 
\item For  every $A \in X$  we get the unit map $I_A : I \to \M^{X}_{F}(A,A)$ in the following manner.\\
First observe that the composition in $\P_{\ol{X}}$  induces a monoidal structure on  $\P_{\ol{X}}(A,A)$  with unit $[0,A]$. \\ 
Moreover we have a canonical $2$-cell $[0,A] \xrightarrow{!} [1,(A,A)]$ in $\P_{\ol{X}}(A,A)$, parametrized by the unique map $0 \xrightarrow{!} 1$ of $\Delta$ (see Appendix B).

The image of this $2$-cell by the  functor $F_{AA}$ is a morphism of $\M$ : $F([0,A]) \xrightarrow{F_{AA}(!)} F([1,(A,A)])$.\\ And we take the unit map $I_A$ to be the composite :
$$ I \xrightarrow{{\varphi_{A}}^{-1}} F([0,A]) \xrightarrow{F_{AA}(!)} F([1,(A,A)])= \M^{X}_{F}(A,A).$$
\item For every triple $(A,B,C)$ of elements of $X$ we construct the composition as follows. Consider the following span in $\M$ (see Observations 2) :
\[
\xy
(-10,0)*+{F_{BC}([1,(B,C)]) \otimes F_{AB}([1,(A,B)])}="X";
(50,0)*+{F_{AC}([1,(A,C)])}="Y";
(25,2)*+{c_{ABC}}="c";
(22,22)*+{F_{AC}([2,(A,B,C)])}="Z";
{\ar@{->}_{\varphi(A,B,C)(t,s)}"Z"; "X"};
{\ar@{->}^{~~~~F_{AC} \{[2,(A,B,C)] \xrightarrow{!} [1,(A,C)] \} }"Z"; "Y"};
{\ar@{.>}"X";"Y"};
\endxy
\]
with $t=[1,(B,C)]$, $s=[1,(A,B)]$ and $t\otimes s=[2,(A,B,C)]$. 
Since $\varphi(A,B,C)(t,s)$ is invertible, we can consider the composite
$$c_{ABC}= F_{AC}\{[2,(A,B,C)] \xrightarrow{!} [1,(A,C)]\} \star {\varphi(A,B,C)(t,s)}^{-1}$$ which will be our composition in $\M^{X}_{F}$.
\item Finally one can easily check that the  associativity and unity axioms required in $\M^{X}_{F}$ follow directly from the axioms required in the definition of  the morphism $F$.
\end{enumerate}
It's clear that the above data give a category enriched over $\M$ in the classical sense, as desired.

\bigskip

Conversely  let $\A$ be a category enriched over $\M$ in the classical sense. For simplicity we will assume that $Ob(\A)$ is a set \footnote{Otherwise we can choose a suitable  Grothendieck universe $\U$ so that $Ob(\A)$ will be a $\U$-set} and we will put $X=Ob(\A)$ .\\ 
In what follows we are going to construct a $Iso(\M)$-colax morphism from  $\P_{\ol{X}}$ to $\M$ denoted by $[\A]$.\\
Let's fix some notations:
\begin{enumerate}
\item[$\bullet$] If $(U_1,\cdots,U_n)$ is a $n$-tuple of objects of $\M$ we will write $U_1 \otimes \cdots \otimes U_n$ for the tensor product  of $U_1,\cdots,U_n$ with all pairs of parentheses starting in front,
\item[$\bullet$] Similary if $(f_1,\cdots,f_n)$ is a $n$-tuple of morphisms of $\M$ we will write $f_1 \otimes \cdots \otimes f_n$ for the tensor product of $f_1,\cdots,f_n$ with all pairs of parentheses starting in front,
\item[$\bullet$] For every object $U$ of $\M$ we will write $r_U$ (resp. $l_U$) for the righ identity (resp. left identity) isomorphism $ U\otimes I \xrightarrow{\sim} U$ (resp. $ I\otimes U \xrightarrow{\sim} U$). We will write $\Id_U$ for identity morphism of $U$.
\item[$\bullet$] If $A$ and $B$ are objects of $\A$, hence elements of $X$, we will write $\A(A,B)$ for the hom-object in $\A$,
\item[$\bullet$] For every triple $(A,B,C)$ of objects we design by $c_{ABC}$ the morphism of $\M$ corresponding to composition :
$$c_{ABC}: \A(B,C) \otimes \A(A,B) \to \A(A,C),$$
\item[$\bullet$] For every $A$ we denote by $I_A$ the unit map : $I_A: I \to \A(A,A)$.
\end{enumerate}
We recall that for each pair $(A,B)$ of elements of $X$, $\P_{\ol{X}}(A,B)$ is the category of elements of some functor from $\Delta^{+}$ to Set \footnote{ from $\Delta$ to Set if $A=B$}. It follows that the morphisms of $\P_{\ol{X}}(A,B)$ are parametrized by the morphisms of $\Delta$.\\

We remind that the morphisms of $\Delta$ are generated by the \emph{cofaces} $d^{i}:n+1 \to n$, and the \emph{codegeneracies} $s^{i}:n \to n+1$ (see \cite{Mac}). Therefore the morphisms of $\P_{\ol{X}}(A,B)$ are generated by the following two types of morphisms:
\[
\xy
(-20,15)*+{[n+1,A \to \cdots A_{i-1} \xrightarrow{(A_{i-1},A_{i})} A_{i}  \xrightarrow{(A_{i},A_{i+1})} A_{i+1} \cdots \to B]}="X";
(-20,0)*+{[n,A \to \cdots A_{i-1} \xrightarrow{(A_{i-1},A_{i+1})} A_{i+1}\cdots \to B]}="Y";
(-90,7)*+{(\ast)};
{\ar@{->}^{d^{i}}"X";"Y"};
\endxy
\]

and 
\bigskip
\[
\xy
(-20,15)*+{[n,A \to \cdots A_{i} \xrightarrow{(A_{i},A_{i+1})} A_{i+1}\cdots \to B]}="X";
(-20,0)*+{[n+1,A \to \cdots A_{i} \xrightarrow{(A_{i},A_{i})} A_{i}  \xrightarrow{(A_{i},A_{i+1})} A_{i+1} \cdots \to B]}="Y";
(-90,7)*+{(\ast \ast)};
{\ar@{->}^{s^{i}}"X";"Y"};
\endxy.
\]
The morphisms of type $(\ast)$ correspond to composition and those of type $(\ast \ast)$ correspond to add the identity of the $i$th object. 

Now we define  the functors $[\A]_{AB}: \P_{\ol{X}}(A,B) \longrightarrow \M$, the components of $[\A]$, in the following manner.
\begin{enumerate}
\item[$\bullet$] The image of $[n,A \to \cdots A_{i} \xrightarrow{(A_{i},A_{i+1})} A_{i+1}\cdots \to B]$ by  $[\A]_{AB}$ is the object of $\M$ : $$\A(A_{n-1},B)\otimes \cdots \otimes \A(A_{i},A_{i+1})\otimes \cdots \otimes \A(A,A_1).$$
\item[$\bullet$] When $A=B$ the image of $[0,A]$ is the object $I$, the unity of $\M$. 
\item[$\bullet$] The image of a morphism of type $(\ast)$ by $[\A]_{AB}$ is the composite :  
\[
\xy
(-20,15)*+{\A(A_{n},B)\otimes \cdots \otimes   \A(A_{i},A_{i+1}) \otimes \A(A_{i-1},A_{i})\otimes \cdots \otimes \A(A,A_1)}="X";
(-20,0)*+{\A(A_{n},B)\otimes \cdots \otimes  [\A(A_{i},A_{i+1}) \otimes \A(A_{i-1},A_{i})]\otimes \cdots \otimes \A(A,A_1)}="Y";
(-20,-15)*+{\A(A_{n},B)\otimes \cdots \otimes \A(A_{i-1},A_{i+1})\otimes \cdots \otimes \A(A,A_1)}="Z";
{\ar@{->}^{\sim}"X";"Y"};
{\ar@{->}^{\Id_{\A(A_{n},B)}\otimes \cdots \otimes c_{A_{i-1}A_{i}A_{i+1}} \otimes \cdots \otimes \Id_{\A(A,A_1)}}"Y";"Z"};
\endxy.
\]
\item[$\bullet$] Similary the image of a morphism of type $(\ast \ast)$ is the composite:
\[
\xy
(-20,15)*+{\A(A_{n-1},B)\otimes \cdots \otimes \A(A_{i},A_{i+1})\otimes \cdots \otimes \A(A,A_1)}="X";
(-20,0)*+{\A(A_{n-1},B)\otimes \cdots \otimes  [I \otimes \A(A_{i},A_{i+1})]\otimes \cdots \otimes \A(A,A_1)}="Y";
(-20,-15)*+{\A(A_{n-1},B)\otimes \cdots \otimes   [\A(A_{i},A_{i}) \otimes \A(A_{i},A_{i+1})]\otimes \cdots \otimes \A(A,A_1)}="Z";
(-20,-30)*+{\A(A_{n-1},B)\otimes \cdots \otimes  \A(A_{i},A_{i}) \otimes \A(A_{i},A_{i+1}) \otimes \cdots \otimes \A(A,A_1)}="W";
{\ar@{->}_{\sim}^{\Id_{\A(A_{n-1},B)}\otimes \cdots \otimes l_{\A(A_{i},A_{i+1})}^{-1} \otimes \cdots \otimes \Id_{\A(A,A_1)}}"X";"Y"};
{\ar@{->}^{\Id_{\A(A_{n},B)}\otimes \cdots \otimes [I_{A_i} \otimes \Id_{\A(A_{i},A_{i+1})}] \otimes \cdots \otimes \Id_{\A(A,A_1)}}"Y";"Z"};
{\ar@{->}_{\sim}"Z";"W"};
\endxy.
\]
\item[$\bullet$]Using the bifunctoriality of the tensor product $\otimes$ in $\M$, its associativity and the fact that morphisms of type $(\ast)$ and $(\ast \ast)$ generate all the morphisms of   $\P_{\ol{X}}(A,B)$, we extend the above operations to a functor $[\A]_{AB}: \P_{\ol{X}}(A,B) \longrightarrow \M$ as desired.
\end{enumerate}
To complete the proof, we have to say what are the morphisms  $[\A]_{AC}(t \otimes s) \xrightarrow{\varphi(A,B,C)(t,s)} [\A]_{BC}(t) \otimes [\A]_{AB}(s) $ for each pair $(t,s)$ in $\P_{\ol{X}}(B,C) \times \P_{\ol{X}}(A,B)$.\\
But if  $s=[n,A \to \cdots A_{i} \xrightarrow{(A_{i},A_{i+1})} A_{i+1}\cdots \to B]$,\\
$t=[m,B \to \cdots B_{j} \xrightarrow{(B_{j},B_{j+1})} B_{j+1}\cdots \to C]$ , we have:
\begin{enumerate}
\item[$\bullet$] $t\otimes s=[n+m,A \to \cdots B \to \cdots C]$,
\item[$\bullet$] $[\A]_{AC}(t \otimes s)=\A(B_{m-1},C)\otimes \cdots \otimes \A(B,B_1) \otimes \A(A_{n-1},B) \otimes \cdots \otimes \A(A,A_1)$,
\item[$\bullet$] $[\A]_{BC}(t) \otimes [\A]_{AB}(s)=[\A(B_{m-1},C)\otimes \cdots \otimes \A(B,B_1)] \otimes [\A(A_{n-1},B) \otimes \cdots \otimes \A(A,A_1)]$.
\end{enumerate}
Then the map $\varphi(A,B,C)(t,s)$ is the unique isomorphism from $[\A]_{AC}(t \otimes s)$ to $[\A]_{BC}(t) \otimes [\A]_{AB}(s)$  given by the associativity of the bifunctor $\otimes$. This maps consists to move the parentheses from  the front to the desired places.\\

Using again the fact that the associativity of $\otimes$ is a natural isomorphism, we see that $\varphi(A,B,C)(t,s)$ is functorial in $t$ and $s$. 

Finally one checks that the functors $[\A]_{AB}$ together with the maps $\varphi(A,B,C)(t,s)$ and $\varphi_A =\Id_{I}$, satisfy the coherence axioms of morphism of bicategories.  Then $[\A]$ is a colax unitary \footnote{unitary means $\varphi_A$ is the identity for every object $A$.} homomorphism from $\P_{\ol{X}}$ to $\M$ as desired.
\end{proof}

\begin{obs} \
\renewcommand\labelenumi{\alph{enumi})}
\begin{enumerate}
\item We recover with this proposition the notion of \textbf{polyad} introduced by Bénabou in \cite{Ben2}. But Bénabou used \textbf{lax morphism} and here we use \textbf{colax homomorphism}. This comes from the fact that in the path-bicategory $\P_{\C}$ of a category $\C$ we took the map $[0,A]$ to be our strict identity in $\P_{\C}(A,A)$ rather than $[1,\Id_A]$.
\item As usual for every $A \in X$ the functor $[\A]_{AA}:\P_{\ol{X}}(A,A) \to \M$ becomes a monoidal functor with the composition.
\item For every $A$, we have a canonical  functor $\ol{A} : \bf{1} \to \ol{X}$ which consists to select the object $A$ and its identity morphism. This functor induces a strict homomorphism $\P_{\ol{A}}: \P_{\bf{1}} \to \P_{\ol{X}}$. 
Therefore any $\ol{X}$-point of $(\M,Iso(\M))$ induces a $1$-point $\P_{\ol{A}} \to \M$ of $(\M,Iso(\M))$, hence a monoid of $\M$ (Proposition \ref{hotmon}). The monoid in question  is simply the hom-object $\A(A,A)$ with the composition $c_{AAA}$ and unit map $I_A$.
\end{enumerate}
\end{obs}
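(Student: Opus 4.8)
The plan is to verify the three assertions by unwinding the colax homomorphism $[\A]$ built in the proof of Proposition \ref{c-enr}, paying attention to how the length grading of $\P_{\ol{X}}$ separates data that are conflated in $\ol{X}$ itself. For assertion (a) I would compare two descriptions of the same enriched category. On one side, the observation of Street recorded in the introduction (\cite{Str}) says that an enriched category over $\M$ with object set $X$ is an $X$-polyad, i.e. a \emph{lax} morphism $\ol{X}\to\M$, whose structure $2$-cells point as $F(B,C)\otimes F(A,B)\to F(A,C)$ (the composition $c_{ABC}$) and $I\to F(A,A)$ (the unit $I_A$). On the other side our $[\A]$ is \emph{colax}, with structure $2$-cells pointing the opposite way. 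The task is to show that this reversal is exactly the effect of the path-construction conventions. I would trace it to two points. First, in $\ol{X}$ the arrow $(A,A)$ \emph{is} the identity, so the lax unit is literally $I\to F(A,A)=\A(A,A)$; in $\P_{\ol{X}}$ the strict identity of $A$ is instead the length-$0$ morphism $[0,A]$ (recall that the functor $i$ of Observations \ref{obsPC1} sends $\Id_A$ to $[0,A]$, not to $[1,\Id_A]$), so the unit constraint is $\varphi_A\colon F_{AA}([0,A])\to I$ in the colax direction, and the genuine $I_A$ reappears only after composing with the degeneracy $[0,A]\to[1,(A,A)]$, exactly as in the proof. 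Second, the composition constraint is split the same way: the colaxity isomorphism compares $F_{AC}([2,(A,B,C)])$ with $F_{BC}([1,(B,C)])\otimes F_{AB}([1,(A,B)])$, and $c_{ABC}$ is recovered by inverting it and following the face map $[2,(A,B,C)]\to[1,(A,C)]$. So the single lax structure cell of a polyad is ``unfolded'' in $\P_{\ol{X}}$ into an invertible colaxity cell together with a face/degeneracy cell; the bookkeeping of $[0,A]$ against $[1,\Id_A]$ is precisely what forces the colax variance.

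For assertion (b) I would restrict $[\A]$ to the single object $A$. By the path-construction, and as already recorded in the one-object case in Remark \ref{rmk-coar}(c), $\P_{\ol{X}}(A,A)$ is a monoidal category under concatenation with unit $[0,A]$. A morphism of bicategories always restricts on an endomorphism hom-category to a colax monoidal functor, with structure maps the components $\varphi(A,A,A)$ and $\varphi_A$; here, since Proposition \ref{c-enr} uses $\W=Iso(\M)$, these maps are invertible, so $[\A]_{AA}$ is in fact strong monoidal. The coherence axioms are just the restriction to $A$ of those already checked for $[\A]$, so nothing new is required.

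For assertion (c) I would argue in four steps. The functor $\ol{A}\colon\mathbf{1}\to\ol{X}$ sends $\o$ to $A$ and $\Id_{\o}$ to $(A,A)$, and by functoriality of $\P_{[-]}$ (Proposition-Definition \ref{path-bicat}(iii)) induces a strict homomorphism $\P_{\ol{A}}\colon\P_{\mathbf{1}}\to\P_{\ol{X}}$. Precomposing, $[\A]\circ\P_{\ol{A}}\colon\P_{\mathbf{1}}\to\M$ is again a $\W$-colax morphism, because its colaxity $2$-cells sit among those of $[\A]$ and hence lie in $Iso(\M)$; this makes it a $\mathbf{1}$-point of $(\M,Iso(\M))$, so Proposition \ref{hotmon}(i) yields a monoid of $\M$. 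Finally I would identify the monoid through the isomorphism $\P_{\mathbf{1}}(\o,\o)\cong\Delta$: the length-$1$ object $1\in\Delta$ corresponds to $[1,\Id_{\o}]$, which $\P_{\ol{A}}$ carries to $[1,(A,A)]$, so the underlying object is $F_{AA}([1,(A,A)])=\A(A,A)$; running the monoid multiplication and unit through the recipe of Proposition \ref{hotmon} reproduces $c_{AAA}$ and $I_A$.

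The main obstacle is assertion (a). Unlike (b) and (c), which are a direct restriction and a direct precomposition, (a) demands a faithful comparison between Bénabou's lax polyads and our colax $[\A]$, and the delicate part is to exhibit canonically the decomposition of each single lax structure cell into an invertible colaxity cell followed by a face/degeneracy cell, and to check that this decomposition is coherent, so that the two formalisms genuinely name the same enriched category rather than merely matching up on objects and hom-objects.
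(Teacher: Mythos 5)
Your proposal is correct and follows essentially the route the paper intends: the paper offers no separate proof of these observations, leaving them as direct consequences of the construction of $[\A]$ in Proposition \ref{c-enr}, of the functoriality of $\P_{[-]}$, and of Proposition \ref{hotmon}, which is exactly what you unwind — including the key point for (a), that the lax structure cells of a Bénabou polyad decompose into the invertible colaxity cells plus the images of the face/degeneracy $2$-cells $[2,(A,B,C)]\to[1,(A,C)]$ and $[0,A]\to[1,(A,A)]$, the variance being forced by taking $[0,A]$ rather than $[1,\Id_A]$ as the strict identity. Your identifications in (b) and (c) (strong monoidality of $[\A]_{AA}$ from invertible constraints, and the monoid $\A(A,A)$ with $c_{AAA}$, $I_A$ via $\P_{\mathbf{1}}(\o,\o)\cong\Delta$) match the paper's construction exactly.
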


\subsection{Category enriched over bicategories}\ \\
\indent In the previous examples we always followed the idea of Bénabou which consists to identify a monoidal category $\M$ with a bicategory with one object denoted\footnote{Some authors denote it by $\Sigma \M$ for the ``suspension'' of $\M$} again $\M$. Bénabou pointed out in \cite{Ben2} that it is possible to define category enriched over bicategory $\M$ in such a way that one recovers the classical theory of enriched categories over a monoidal category by taking $\M$ with one object. He named them \emph{polyads}.\\

In Proposition \ref{c-enr}  we've showed that a category enriched over a monoidal category  $\M$ ($i.e$ a polyad) was the same thing as an $\ol{X}$-point of $(\M, Iso(\M))$ where $X$ is the set of objects.\\ 

In what follows we're going to see that the same situation hold when $\M$ has many objects, which means that a category enriched over a bicategory $\M$ is the same thing as an $\ol{X}$-point of $(\M, Iso(\M))$. 

We recall first the definition of a $\M$-category for a general bicategory $\M$. 
\begin{df}
Let $\M$ be a bicategory. A $\M$-category $\A$ consists of the following data :
\begin{itemize}
\item for each object $U$ of $\M$, a set $\A_U$ of objects \emph{over} $U$;
\item for objects $A,B$ over $U,V$, respectively an arrow $\A(A,B): U \to V$ in $\M$;
\item for each object $A$ over $U$, a $2$-cell $I_A: \Id_U \Longrightarrow \A(A,A)$ :
\[
\xy
(30,0)*+{U}="U";
(0,0)*+{U}="W";
{\ar@/^1.7pc/^{\A(A,A)}"U";"W"};
{\ar@/_1.7pc/_{\Id_U}"U"-(2,0);"W"+(2,0)};
{\ar@{=>}^{I_A}(15,5);(15,-5)};
\endxy
\] in $\M$;
\item for object $A,B,C$ over $U,V,W$, respectively, a $2$-cell $c_{ABC} :\A(B,C)\otimes \A(A,B) \Longrightarrow \A(A,C)$ :
\[
\xy
(50,0)*+{U}="U";
(22,22)*+{V}="V";
(-10,0)*+{W}="W";
{\ar@/_1.7pc/_{\A(A,B)}"U"; "V"};
{\ar@/_1.7pc/_{\A(B,C)}"V"; "W"};
{\ar@/^1.7pc/^{\A(A,C)}"U";"W"};
{\ar@/_2.9pc/_{\A(B,C)\otimes \A(A,B)}"U"-(2,0);"W"+(2,0)};
{\ar@{=>}^{c_{ABC}}(20,10);(20,-5)};
\endxy
\] in $\M$;
\end{itemize}
satisfying the obvious three axioms of left and right identities and associativity.
\end{df}

The reader can immediately check that if $\M$ has one object we recover the enrichment over a monoidal category. 
We've followed here  the terminology \emph{`object over'} like in \cite{Str}. This provides a geometric vision in the theory of enriched categories which is very useful in some situations. 

\begin{rmk}
We will assume that all the sets $\A_U$ are nonempty, otherwise we replace $\M$ by its ``restriction'' to the set of $U$ such that $\A_U$ is nonempty. 
\end{rmk}

The following proposition is the generalization of Proposition \ref{c-enr}.

\begin{prop}\label{bicat-enr}
Let $\M$ be a bicategory and $\W$ be the class of invertible  $2$-cells of $\M$. We have an equivalence between the following data
\renewcommand\labelenumi{\roman{enumi})}
\begin{enumerate}
\item an $\ol{X}$-point of $(\M,\W)$ 
\item an enriched category over $\M$ having $X$ as set of objects $i.e$ a polyad of Bénabou.
\end{enumerate}
\end{prop}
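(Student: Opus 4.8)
The plan is to generalize the proof of Proposition~\ref{c-enr} from the one-object case to the many-object bicategory $\M$. Since $\W$ here is exactly $Iso(\M)$, a $\W$-colax morphism $F:\P_{\ol{X}} \to \M$ has all its colaxity $2$-cells invertible, hence $F$ is an honest (colax) homomorphism of bicategories in the sense of Bénabou. The essential new feature compared to Proposition~\ref{c-enr} is that the objects of $\ol{X}$ are no longer all sent to a single object of $\M$: a homomorphism $F$ assigns to each $A \in X$ an object $FA = U_A \in Ob(\M)$, and the hom-category $\P_{\ol{X}}(A,B)$ is now mapped by $F_{AB}$ into the hom-category $\M(U_A,U_B)$ rather than into a single monoidal category. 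This is precisely what produces the ``objects over $U$'' structure in the definition of an $\M$-category.

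First I would set up the forward direction. Given an $\ol{X}$-point $F$ of $(\M,\W)$, I build an $\M$-category $\A = \M^X_F$ as follows. For each object $U$ of $\M$ set $\A_U := \{A \in X : FA = U\}$; this partitions $X$ and records which objects lie over $U$. For objects $A,B$ over $U,V$ respectively, define the hom-$1$-cell $\A(A,B) := F_{AB}([1,(A,B)]) \in \M(U,V)$. The unit $2$-cell $I_A : \Id_U \Rightarrow \A(A,A)$ is obtained exactly as in Proposition~\ref{c-enr}: compose $\varphi_A^{-1} : \Id_{U} \to F_{AA}([0,A])$ (invertible since $\W = Iso(\M)$) with $F_{AA}$ applied to the canonical $2$-cell $[0,A] \xrightarrow{!} [1,(A,A)]$. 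The composition $2$-cell $c_{ABC}$ for $A,B,C$ over $U,V,W$ is built from the span already displayed in Observations, namely
\[
F_{AC}([2,(A,B,C)]) \xrightarrow{\varphi(A,B,C)(t,s)} F_{BC}(t)\otimes F_{AB}(s), \qquad F_{AC}([2,(A,B,C)]) \xrightarrow{F_{AC}(!)} F_{AC}([1,(A,C)]),
\]
where $t=[1,(B,C)]$, $s=[1,(A,B)]$; inverting the left leg (again legitimate since it lies in $\W=Iso(\M)$) and composing gives $c_{ABC} = F_{AC}(!) \star \varphi(A,B,C)(t,s)^{-1} : \A(B,C)\otimes\A(A,B) \Rightarrow \A(A,C)$. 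The associativity and identity axioms for the $\M$-category then follow from the coherence axioms (associativity and unit) of the homomorphism $F$, together with the naturality of the colaxity maps — this is a direct transcription of the diagram chase in Proposition~\ref{c-enr}, now carried out hom-category by hom-category in $\M$.

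For the converse I would start from an $\M$-category $\A$ with $X = \bigsqcup_U \A_U$ as its set of objects, and construct a homomorphism $[\A] : \P_{\ol{X}} \to \M$. On objects send $A \mapsto U$ whenever $A \in \A_U$. On each hom-category $\P_{\ol{X}}(A,B)$, define $[\A]_{AB}$ on a chain $[n,(A,A_1,\dots,A_{n-1},B)]$ to be the iterated tensor $\A(A_{n-1},B)\otimes\cdots\otimes\A(A,A_1)$ in $\M(U,V)$, exactly as before but now interpreting $\otimes$ as horizontal composition of $1$-cells in $\M$. The images of the generating morphisms of type $(\ast)$ (composition, using $c_{ABC}$) and $(\ast\ast)$ (inserting a unit, using $I_A$ and the unit constraints $l,r$ of $\M$) are defined verbatim as in Proposition~\ref{c-enr}. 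The colaxity isomorphisms $\varphi(A,B,C)(t,s)$ are supplied by the associativity constraint of the bicategory $\M$, which in the monoidal case was strict associativity of $\otimes$; here I must use Bénabou's associator $2$-cells, which are invertible and hence land in $\W$.

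The main obstacle I anticipate is bookkeeping of the coherence data in a genuine (non-strict) bicategory $\M$. In Proposition~\ref{c-enr} the target was a monoidal category whose associativity could be handled by MacLane coherence and the convention of ``parentheses to the front''; here one must carefully invoke the pentagon and triangle axioms of $\M$ to verify that the $\varphi(A,B,C)(t,s)$ satisfy the hexagon/coherence conditions required of the colaxity maps of a morphism of bicategories, and that the round trip $\A \mapsto [\A] \mapsto \M^X_{[\A]}$ (and back) recovers the original data up to the appropriate notion of equivalence. I expect this to be the only genuinely delicate part; everything else is a faithful reprise of Proposition~\ref{c-enr} with ``objects of the single monoidal category $\M$'' replaced by ``$1$-cells in the various hom-categories $\M(U,V)$'', and with invertibility of the relevant structural $2$-cells guaranteed throughout by the hypothesis $\W = Iso(\M)$.
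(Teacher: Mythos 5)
Your proposal is correct and follows essentially the same route as the paper: the paper's proof likewise reduces to Proposition~\ref{c-enr}, sending each element of $\A_U$ to $U$, interpreting the iterated tensors as horizontal composites of $1$-cells in the hom-categories $\M(U,V)$, and obtaining the colaxity maps from the (invertible, hence in $\W$) associativity constraints of $\M$. The coherence bookkeeping you flag as the delicate point is exactly what the paper compresses into its closing remark that the composition in $\M$ is a functor and its associativity a natural isomorphism.
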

\begin{proof}
The proof is exactly the same as that of Proposition \ref{c-enr}.\\

Using the same construction we can see directly that any $\ol{X}$-point of $(\M,\W)$ gives rise to a category enriched over $\M$. \\

Conversely given a $\M$-category $\A$, one take $X$ to be the disjoint union of the $\A_U$ : $X=\bigsqcup \A_U$.

We define a colax homomorphism $[\A] :\P_{\ol{X}} \to \M$ by sending the elements of $\A_U$ to $U$ and defining the components 
$[\A]_ {AB}: \P_{\ol{X}}(A,B) \to \M$ in the same way as in the proof of Proposition \ref{c-enr}.\\
For $t=[n,A \to \cdots A_{i} \xrightarrow{(A_{i},A_{i+1})} A_{i+1}\cdots \to B]$ in $\P_{\ol{X}}(A,B)$, with :\\
$A$ over $U$,\\
$A_i$ over $U_i$,\\
$A_{i+1}$ over $U_{i+1}$,\\
$B$ over $V$ , then  we set :
$[\A]_ {AB}(t)=\A(A_{n-1},B)\otimes \cdots \otimes \A(A_{i},A_{i+1})\otimes \cdots \otimes \A(A,A_1)$.\\
In particular we have :\\
$[\A]_ {AC}([2,(A,B,C)])= \A(B,C) \otimes \A(A,B)$,\\
$[\A]_ {AB}([1,(A,B)])= \A(A,B)$,\\
$[\A]_ {AA}([1,(A,A)])= \A(A,A)$,\\
$[\A]_ {AA}([0,A])=\Id_U$.\\

Here again we've chosen a representation of the composition of the $1$-cells $\A(A_{n-1},B)$, $\A(A_{i},A_{i+1})$, $\A(A,A_1)$, for example  all pairs of parentheses starting in front.\\

One proceed in the same way as in the proof of Proposition \ref{c-enr} to construct the homomorphism $[\A]$. We need to use the fact that the composition in $\M$ is a functor, and the associativity of the composition is a natural isomorphism.
\end{proof} 


\subsection{Segal categories} \ \\
\indent We recall that $\Delta^{+}$ is the ``topologists's category of simplices''.  We obtain $\Delta^{+}$ by removing the oject $0$ from $\Delta$. We rename the remaining objects by $0,1,2,...$ .\\ 

For a small category $\C$ we can associate functorially a simplicial set $\Nv(\C): (\Delta^{+})^{op} \to Set$, called \emph{nerve} of $\C$. The natural maps, called \emph{Segal maps} 
$$\Nv(\C)_k \to \Nv(\C)_1 \times_{\Nv(\C)_0} \cdots \times_{\Nv(\C)_0} \Nv(\C)_1$$ 
are isomorphisms.\\

Simpson and Hirschowitz \cite{HS} generalized this process to define inductively Segal n-categories. They defined first  a `category' $n$SePC of \emph{Segal $n$-précat} as follows.
\begin{enumerate}
\item[$\bullet$] A Segal $0$-précat is a simplicial set, hence a $\bf{1}$-point of $(Set,\times,1)$ in our terminology.
\item[$\bullet$] For $n \geq 1$, a Segal $n$-précat is a functor :
$$\A:(\Delta^{+})^{op} \to (n-1)\text{SePC}$$ 
such that $\A_0=\A(0)$ is a \underline{discrete} object of $(n-1)\text{SePC}$. Since $(n-1)\text{SePC}$ has finite products, we can formulate it again in term $\bf{1}$-point of  $(n-1)\text{SePC}$.
\item[$\bullet$] A \emph{morphism} of Segal $n$-précat is a natural transformation of functors.
\end{enumerate} 
These data define the category $n\text{SePC}$. They gave a notion of \emph{equivalence} in $n\text{SePC}$ and model structure on it.\\

Finally they define a Segal $n$-category to be a Segal $n$-précat $\A:(\Delta^{+})^{op} \to (n-1)\text{SePC}$ such that :
\begin{enumerate}
\item[$\bullet$] for every $k$, $\A_k$ is a Segal $(n-1)$-category ,
\item[$\bullet$] for every $k \geq 1$, the canonical maps 
$$\A_k \to \A_1 \times_{\A_0} \cdots \times_{\A_0} \A_1$$
are equivalences of Segal $(n-1)$-précats.
\end{enumerate}  

\begin{rmk}
These definitions involved the use of \textbf{discrete} objects. A discrete object in \cite{HS}, is by definition an object in the image of some fully faithful functor from $Set$ to $(n-1)\text{SePC}$. For a Segal $n$-category $\A$ the discret object $\A_0$  plays the role of ``set of objects''. We can see the analogy with the nerve of a small category.\\
It's important to notice that in the above definitions, one needs a notion of \textbf{fiber product} to define the Segal maps
$$\A_k \to \A_1 \times_{\A_0} \cdots \times_{\A_0} \A_1.$$  In fact $(n-1)\text{SePC}$ is a cartesian monoidal category.\\

One could interpret $\A$ as a \textbf{generalized nerve} of a category enriched over $((n-1)\text{SePC}, \times, 1)$ with an `internal set' of object $\A_0$. 
\end{rmk}
If we do not have a notion of \emph{discrete object} and a \emph{fiber product} we need to change the construction a little bit to define  generalized Segal categories. For this purpose, one needs a category $\M$ together with a class of \emph{homotopy equivalences} such that $(\M,\W)$ form a base of enrichment . We take the set of objects `outside' $\M$, to avoid the use of discrete objects, by introducing the set $X$.\\

The following definition is on the ``level $2$'' when $\M$ is bicategory. We will extend it later to the case where $\M$ is an $\infty$-category .  
\begin{df}
For $(\M,\W)$ a base of enrichment with  $\W$ a class of  homotopy equivalences. For any set $X$, an $\ol{X}$-point of $(\M,\W)$  will be called a \emph{Segal} $\M_{\W}$-category.
\end{df}

\begin{prop}\label{seg-ncat}
Let $\M= (n-1)\text{SePC}$ and $\W$ be the equivalences of Simpson and Hirschowitz in  \cite{HS}. Let $X$ be a nonempty set. Then we have an equivalence between the following data 
\begin{enumerate}
\item[$\bullet$] a Segal $n$-category $\A$ in the sense of Simpson-Hirschowitz, with $\A_0=X$
\item[$\bullet$] an $\ol{X}$-point $F$ of $(\M,\W)$, satisfying the induction hypothesis:\\ $F[p,(x_0,\cdots,x_p)]$ is a Segal $(n-1)$-category.  
\end{enumerate}
\end{prop}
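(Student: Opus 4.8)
The plan is to set up an explicit dictionary between the two kinds of data and to verify that, on each side, the defining conditions of Simpson--Hirschowitz match exactly the conditions packaged in an $\ol{X}$-point; the only change from the strict situation of Proposition \ref{c-enr} is that the invertibility of the structural $2$-cells is relaxed to membership in $\W$. Since $\M=(n-1)\tx{SePC}$ is cartesian monoidal with terminal object $1$, the bridge between colax data and simplicial data is Leinster's Proposition \ref{leiprop}, applied ``with many objects''. Throughout I would keep the backbone of the proof of Proposition \ref{c-enr} intact and only record that the relevant $2$-cells lie in $\W$.

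First I would pass from a $\W$-colax morphism $F:\P_{\ol{X}}\to\M$ to a functor $\A:(\Delta^{+})^{op}\to\M$ by setting
$$\A_k:=\coprod_{(x_0,\dots,x_k)\in X^{k+1}} F_{x_0 x_k}\big([k,(x_0,\dots,x_k)]\big),$$
and by reading off the simplicial structure maps exactly as in the proof of Proposition \ref{c-enr}: the \emph{inner} faces and all degeneracies come from the morphisms of type $(\ast)$ (composition) and type $(\ast\ast)$ (insertion of an identity) inside each posetal category $\P_{\ol{X}}(A,B)$, while the \emph{outer} faces $d_0,d_k$ are obtained from the colaxity maps $\varphi(A,B,C)(t,s)$ followed by the product projections available because $\M$ is cartesian. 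This is precisely the mechanism of Proposition \ref{leiprop}: in the one-object case the colaxity of a colax monoidal functor supplies the outer faces, and here the same holds fibrewise over the tuples of vertices. The unit colaxity map $\varphi_x:F_{xx}([0,x])\to 1$, lying in $\W$, identifies $\A_0$ up to equivalence with $\coprod_{x\in X}1$, i.e. the discrete object $X$; this is exactly where the choice of the coarse category $\ol{X}$ (see \ref{coarse}) enforces the Simpson--Hirschowitz requirement that $\A_0$ be discrete with underlying set $X$.

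Next I would match the two families of conditions. Because $\A_0\simeq X$ is discrete, the fibre product $\A_1\times_{\A_0}\cdots\times_{\A_0}\A_1$ decomposes as the coproduct over $(x_0,\dots,x_k)$ of the honest products $F[1,(x_0,x_1)]\times\cdots\times F[1,(x_{k-1},x_k)]$, so the Segal map of $\A$ in the component $(x_0,\dots,x_k)$ is nothing but the iterated colaxity map
$$F_{x_0 x_k}\big([k,(x_0,\dots,x_k)]\big)\longrightarrow F[1,(x_0,x_1)]\times\cdots\times F[1,(x_{k-1},x_k)].$$
By the coherence axioms of a colax morphism this iterated map factors through horizontal composites of the binary maps $\varphi(\cdot,\cdot,\cdot)(t,s)$; since each of these lies in $\W$ and $\W$ is stable under horizontal composition and has the $3$-out-of-$2$ property (Definition \ref{base_enrich}), the Segal map is a homotopy equivalence, i.e. an equivalence of Segal $(n-1)$-précats. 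Conversely, the case $k=2$ recovers each binary colaxity map as a Segal map, so ``the Segal maps are equivalences'' and ``$F$ is a $\W$-colax morphism'' are the same condition. Likewise the induction hypotheses transfer: $\A_k=\coprod F[k,\dots]$ is a Segal $(n-1)$-category for every $k$ if and only if each fibre $F[k,(x_0,\dots,x_k)]$ is one, using that $(n-1)\tx{SePC}$ is closed under coproducts and that the equivalences of \cite{HS} are compatible with coproducts.

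The hard part will be writing out carefully this ``many-object Leinster correspondence'', namely checking that the outer-face maps built from the colaxity data, together with the inner faces and degeneracies coming from $\P_{\ol{X}}$, satisfy all the simplicial identities, and that these identities are equivalent to the coherence axioms of the $\W$-colax morphism $F$. This is the step where the strict argument of Proposition \ref{c-enr} is genuinely reused: there the identities hold on the nose because the $\varphi$ are isomorphisms, and here one keeps exactly the same bookkeeping while only recording that the relevant $2$-cells lie in $\W$. Once this dictionary is in place, the two constructions $F\mapsto\A$ and $\A\mapsto F$ are mutually inverse up to the equivalences of $n\tx{SePC}$, which yields the asserted equivalence of data.
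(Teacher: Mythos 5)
Your argument is correct and supplies precisely the details behind the paper's own proof, which is literally the single word ``Obvious'': the intended route is exactly yours, namely the many-object version of Leinster's correspondence (Proposition \ref{leiprop}) grafted onto the dictionary of Proposition \ref{c-enr}, with invertibility of the colaxity $2$-cells relaxed to membership in $\W$, and the horizontal stability plus vertical $3$-out-of-$2$ properties of Definition \ref{base_enrich} translating between the binary colaxity maps and the iterated Segal maps in both directions. The one point to state more carefully is the zeroth level: Hirschowitz--Simpson require $\A_0$ to be \emph{literally} the discrete object $X$, whereas your $\A_0=\coprod_{x}F_{xx}([0,x])$ is only weakly discrete via the maps $\varphi_x\in\W$, so in the dictionary one should set $\A_0:=X$ outright (discarding the data $F_{xx}([0,x])$, and conversely taking $F_{xx}([0,x])=1$ with $\varphi_x$ the identity), which is exactly how the paper's device of taking the set of objects ``outside'' $\M$ via $\ol{X}$ is meant to be used.
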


\begin{proof}
Obvious.
\end{proof}

\subsection{Linear Segal categories}\ \\
\indent We fix $\M=(\bf{ChMod_R,\otimes_R,R})$ the monoidal category of (co)-chain complexes of $\bR$-modules for a commutative ring $\bR$.\ \\

\paragraph{Choice of the class of maps $\W$} 
\renewcommand\labelenumi{\alph{enumi})}
\begin{enumerate}
\item When working with a general commutative ring $\bR$ then we will take $\W$ to be the class of \textbf{chain homotopy equivalences}. \\
\item But if $\bR$ is a field we can take $\W$ to be the class of \textbf{quasi-isomorphisms}.
\end{enumerate}

\begin{rmk}
Leinster \cite{Lei2} pointed out that for a general commutative ring $\bR$, the quasi-isomorphisms may not be stable by tensor product because of the Künneth formula. 
\end{rmk}

\begin{df}\label{DG-Seg}
Let $X$ be a nonempty and  $\M=(\bf{ChMod_R,\otimes_R,R})$ together with $\W$ the suitable class of weak equivalences. 
A Segal DG-category is an $\ol{X}$-point of $(\M,\W)$, that is a $\W$-colax morphism :

$$F: \P_{\ol{X}} \to \M$$
\end{df}

\begin{rmk}\ \
\begin{itemize}
\item As one can see a strict Segal point of $(\M,W)$ is a classical DG-category. 
\item As usual we can use the iterative process \emph{à la} Simpson-Tamsamani by defining enrichment over $\M$-Cat with the suitable weak equivalences. In this way we can define also higher linear Segal categories. 
\end{itemize}
\end{rmk}

\subsection{Nonabelian cohomology}\label{nabcoh}
\subsubsection{$G$-categories}\label{g-cat} \ \\
\indent Bénabou \cite{Ben2} pointed out that we can use polyads to `pick up' a coherent family of isomorphisms satisfying \textbf{cocyclicity}. But polyads are enriched categories and correspond to strict Segal $\ol{X}$-points in our langage, it appear that \textbf{the cocyclicity conditions of torsors reflect a composition operation}. For example EG for a group G in $\bf{(Set,\times)}$ is a G-category in an obvious manner. The reader can find in \cite{Joy-Tier} an account on torsors.\ \\

We denote by BG the usual category having one object say, $\star$, and $\Hom(\star ,\star) = G$.  
\begin{df}
Let $X$ be a nonempty set. An $\ol{X}$-point $F : \P_{\ol{X}} \longrightarrow BG $  is called a G-category.
\end{df}
If we denote by $\M^{X}_{F}$ the corresponding category then we have for every pair $(a,b)$ of elements of $X$, an element $\M^{X}_{F}(a,b)$ of $G$. The composition is the identity and gives a cocyclicity condition \\
$\M^{X}_{F}(a,b) \centerdot \M^{X}_{F}(b,c)=\M^{X}_{F}(a,c)$ and $\M^{X}_{F}(a,a)=e$, where $e$ is the unit in G.\ \\

\begin{obs}\ \
\begin{itemize}
\item We've considered a group in the category of sets but we can generalize it to any group object using the functor of points. This will be an iterative process of enrichment, that is enrichment over the categories of G-Cat when G is group in $\bf{(Set,\times)}$. 
\item It follows immediately that any group homomorphism from G to H will take a G-category to an H-category. 
\item The geometric picture behind a G-category is the notion of G-bundle. Roughly speaking we want to consider each element of  $X$ as an opent set of some space and to consider $\M^{X}_{F}(a,b)$ as a transition function. We can then consider $F$ a \textbf{generic trivialisation}. When all the $\M^{X}_{F}(a,b)$ are equal to $e$, then our vector bundle (or local system) is trivial. 
\item From this observation we can define the characteristic classes of Segal path-object in general by suitably adapting the classical definitions. We will discuss in \cite{SEC2} when we will introduce the descent theory of path-objects. 
\end{itemize}
\end{obs}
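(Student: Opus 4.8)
The plan is to read the four bulleted assertions as three elementary verifications together with one deferred program, and to settle the verifications by unwinding the definition of an $\ol{X}$-point (Definition \ref{p-object}) in the special case where the target bicategory is a one-object $1$-category, in which every $2$-cell is an identity. First I would record the key simplification: $BG$ is an ordinary category with a single object $\star$ and $\Hom(\star,\star)=G$, viewed as a bicategory all of whose $2$-cells are identities. Hence for an $\ol{X}$-point $F:\P_{\ol{X}}\to BG$ the colaxity maps $\varphi(A,B,C)(t,s)$ and $\varphi_A$ are $2$-cells of $BG$, and are therefore forced to be identities; in particular the $\W$-condition of Definition \ref{p-object} is automatic and the distinction between colax and strict collapses. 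Writing $g_{ab}=\M^X_F(a,b)\in G$, the triviality of the colaxity maps is exactly the statement that the induced composition $c_{abc}$ is trivial, which forces the cocyclicity relations $g_{ab}\cdot g_{bc}=g_{ac}$ and $g_{aa}=e$ recorded above. This both justifies the displayed relations and establishes the third bullet: such a family $(g_{ab})$ is precisely a $G$-valued $1$-cocycle on the index set $X$, i.e. the transition-function data of a $G$-bundle, with $\ol{X}$ playing the role of the \v{C}ech groupoid of the cover and $F$ its generic trivialisation; triviality corresponds to $g_{ab}=e$ for all $a,b$.

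Next I would establish functoriality in the group (second bullet). A group homomorphism $\psi:G\to H$ is the same datum as a strict homomorphism of bicategories $B\psi:BG\to BH$ (it fixes the object and acts as $\psi$ on hom-sets, functoriality of $B\psi$ being exactly $\psi(gg')=\psi(g)\psi(g')$ and $\psi(e)=e$). Since the composite of a colax morphism with a strict homomorphism is again colax, post-composition $F\mapsto B\psi\circ F$ sends an $\ol{X}$-point of $BG$ to an $\ol{X}$-point of $BH$; its colaxity maps are the images under $B\psi$ of those of $F$, hence identities, so the $\W$-condition persists. On cocycles this is just $g_{ab}\mapsto\psi(g_{ab})$, the usual extension of structure group along $\psi$, and it respects $\Id_G$ and composition of homomorphisms, giving the claimed functor from $G$-categories to $H$-categories.

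For the first bullet I would pass to the functor of points: for a group object $G$ in a category $\Ccal$ with finite products, put $\underline{G}=\Hom(-,G):\Ccal^{op}\to\tx{Grp}$, so that for every test object $T$ the set $\underline{G}(T)$ is an ordinary group and the previous two paragraphs apply verbatim to produce $\underline{G}(T)$-categories, naturally in $T$ by the functoriality just proved. The resulting presheaf of categories is the ``iterative enrichment'' over the categories $\underline{G}(T)$-$\mathrm{Cat}$. The characteristic-class assertion (fourth bullet) I would only sketch: one reduces $F$ to its homotopic part via the localized bicategory $\W^{-1}\M$ of Proposition \ref{reduc} and adapts the classical nerve/\v{C}ech definitions to the cocycle $(g_{ab})$.

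I expect the genuine obstacle to lie precisely in these last two items rather than in the elementary verifications. Making the functor-of-points version coherent requires passing from a fixed test object to all of $\Ccal^{op}$, which means gluing the local $\underline{G}(T)$-categories along covers, i.e. a descent condition for path-objects; likewise the characteristic-class theory presupposes that descent. Neither is developed in the present paper, and both are deferred to \cite{SEC2}; the cocycle identification and the functoriality in $G$ are exactly the part that can be settled here by the bicategorical manipulations above.
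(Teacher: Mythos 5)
Your proposal is correct and follows essentially the same route as the paper: the paper offers these observations without proof, and its implicit justification is exactly your definitional unwinding --- the $2$-cells of $BG$ are identities, so the colaxity maps are forced to be identities, collapsing colax to strict and yielding the cocyclicity relations $\M^{X}_{F}(a,b)\centerdot\M^{X}_{F}(b,c)=\M^{X}_{F}(a,c)$, $\M^{X}_{F}(a,a)=e$, with the second bullet being post-composition along the strict homomorphism $B\psi:BG\to BH$ (a base change in the paper's terminology) and the first handled pointwise via $\Hom(-,G)$. Your honest flagging of the last two items (coherent gluing over test objects and characteristic classes) as requiring the descent theory deferred to \cite{SEC2} matches the paper's own stance exactly.
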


\begin{term}
Let $\A$ be a small category. Following the terminology of Simpson, we will call \textbf{interior of $\A$} and denote by Int($\A$) the biggest groupoid  contained in $\A$. For a base $(\M,\W)$, we take the interior Int[$(\M,\W)$] to be the sub-bicategory whose underlying $1$-category is the interior of $\M_{\leq 1}$. 
\end{term}
 
\begin{df}
Let $\C$ be a small category. A $\C$-generic cohomological class in coefficient in $\M$ is a Segal path-object of \emph{Int}$[(\M,\W)]$.
\end{df} 

\begin{rmk}
When $\C= \ol{X}$ an $\ol{X}$-generic cohomological class is precisely a Segal $\M_{\W}$-category having $X$ as set of object and such that each $\Hom(A,B)$ is invertible. We can require also that each $\Hom(A,A)$ is \emph{contractible}.
\end{rmk}

\begin{ex}\label{exp-bc}
Let $Y$ be a complex manifold and $f : Y \to \Cx$ a function. 
\vspace*{0.2cm}
\\
\textbf{Case 1}. We consider $\Cx$ with the monoid structure $(\Cx,+,0)$ and view it as discrete monoidal category.\ \\

Let $Y_{\delta f}$ be the following  category enriched over $(\Cx,+,0)$.
\begin{itemize}
\item $Ob(Y_{\delta f})=$ set of \emph{points} of $Y$.
\item For every pair of points $(a,b)$ the hom-object is $Y_{\delta f}(a,b)= f(b)-f(a)$.
\item The composition is given by the cocyclicity: $Y_{\delta f}(a,b) + Y_{\delta f}(b,c)= Y_{\delta f}(a,c)$  
\item There is only the identity morphism between $a$ and $a$,  $Y_{\delta f}(a,a)=0$
\end{itemize}
We see through this example that the coboundary operation gives rise to a $(\Cx,+,0)$-category. But this category forget many informations of both $Y$ and $f$. This category reduces to a category having  as objects the fibers of $f$. \\
\vspace*{0.2cm}
\\
\textbf{Case 2}. With the previous category we form a new category by base change from $(\Cx,+,0)$ to $(\Cx^{\star},\times,1)$ using the exponential function. We denote this category enriched over $(\Cx^{\star},\times,1)$ by $e^{Y_{\delta f}}$.
\begin{itemize}
\item $Ob(e^{Y_{\delta f}})=$ set of \emph{points} of $Y$.
\item For every pair of points $(a,b)$ the hom-object is $e^{Y_{\delta f}}(a,b)= e^{f(b)-f(a)}$.
\item The composition is given by the `multiplicative' cocyclicity: $e^{Y_{\delta f}}(a,b) \times e^{Y_{\delta f}}(b,c)= e^{Y_{\delta f}}(a,c)$  
\item Here again there is only the identity morphism between $a$ and $a$,  $e^{Y_{\delta f}}(a,a)=1$
\end{itemize}

As we can see these data correspond to a line bundle over a `generic space' whose set of open covering has the same cardinality as the set of points of $Y$. Moreover any two members $a,b$ of this covering intersect necessarily in oder to have a $e^{Y_{\delta f}}(a,b)$. We can think for example that this is a covering of an irreducible component of some space with the Zariski topology.\ \\ 

Here again the categorical structure reduces to a category whose set of objects is the set of fibers of $f$. And each fiber gives a trivial category which correspond to a trivial line bundle over some space. In some cases the fiber $f^{-1}(0)$ is of interest e.g  Riemann \emph{zêta function}, divisor associated to a function, etc.\\  
\vspace*{0.2cm}
\\
\textbf{Case 3}. We consider again the category $Y_{\delta f}$ and form a new category $|Y_{\delta f}|$ having the same set of objects. 

\begin{itemize}
\item For every pair of points $(a,b)$ the hom-object is $|Y_{\delta f}|(a,b)= |f(b)-f(a)|$.
\item The composition is given by the \emph{triangle inequality}: $|Y_{\delta f}|(a,b) + |Y_{\delta f}|(b,c) \geq |Y_{\delta f}|(a,c)$  
\end{itemize}
We get then a metric space which the same thing as a category enriched over $(\ol{\R}_+,+,0, \geq)$.

This last case can be generalized as follows. \\

Given two spaces $X$ and $Y$ and a morphism $f :X \to Y$ we can transport any metric $d_Y$ on $Y$ to $X$ by pull back. We define $f^*d_Y$ by the obvious formula  $f^*d_Y(a,b)= d_Y(fa,fb)$.\\ 
\end{ex}

\begin{rmk}
If the space $X$ comes with a topology then in each construction we will have, an `atlas' of subcategories enriched over respectively $(\Cx,+,0)$, $(\Cx^{\star},\times,1)$ and $(\ol{\R}_+,+,0, \geq)$. Here we have an example of \emph{descent of relative enrichment}. \ \\

These basic observations will rise many questions  for general bases $\M$ and will help us to understand the structure of $\M$-Cat.  With the powerful language of higher categories we get new points of view on classical situations and new notions are created. 
\end{rmk}

\subsubsection{Parallel transport}\label{p-transp}\ \\
\indent In the following we give an example of $1$-functor which is viewed as an enrichment. We refer the reader to Schreiber-Waldorf \cite{Sch-Wal-transp}  and references therein for an account on \emph{parallel transport} with a guidance toward higher categories. \\

Let $M$ be a smooth manifold and $\E \to M$ a vector bundle equipped with a connection $\nabla$. The connection induces a  functor 
$$ \text{Tra} \nabla: \Pcal_{1}(M) \to \Vec $$
called `parallel transport functor'.\ \\

Here $\Pcal_{1}(M)$ is the \emph{Path-groupoid}  of $M$ (morphisms are thin-homotopy classes of smooth paths in $M$) and $\Vec$ is the category of vector spaces.

The functor sends each point $x$ of $M$ to its fiber $\E_x$, and each path $f:x \to y$, to  the \emph{parallel transport} \\  $\text{Tra} \nabla(f):\E_x \to \E_y$ induces by the connection along the path. \\

The relation with enriched categories comes when we view each point $x$ of $\Pcal_1(M)$ to be \textbf{over its fiber $\E_x$}.\\

In fact if we consider $\bf{Vect}$ as a bicategory, and even a strict $2$-category, with all the $2$-cells being identities (or \emph{degenerated}) we can ``lift'' the functor $$ \text{Tra} \nabla: \Pcal_{1}(M) \to \Vec $$  to  a \underline{strict} homomorphism  from the $2$-path-bicategory of $\Pcal_{1}(M)$ to $\Vec$ (see Observations \ref{obsPC1}). In our terminology this will be   \underline{strict} `free'  $\Pcal_1(M)$-point of $\Vec$ but we may prefer the terminoly $\Pcal_1(M)$-module in this situation. \ \\

The corresponding $\Pcal_1(M)$-module will be denoted $\E^{-1}$ and is  described as follows.\\
\begin{enumerate}
\item[$\bullet$] For every $x$ in $\Pcal_1(M)$,  $\E^{-1}(x)= \E_x$.
\item[$\bullet$] For every pair $(x,y)$ , the component $\E^{-1}_{xy}: \P_{\Pcal_1(M)}(x,y) \to \Vec$ is given by :\\
if $s=[n,x \to \cdots x_{i} \xrightarrow{f_i} x_{i+1}\cdots \to y]$ with each $f_i : x_i \to x_{i+1}$ a morphism in $\Pcal_1(M)$  then we set 
$$\E^{-1}(s):= \text{Tra} \nabla(f_{n-1}) \circ \cdots \circ \text{Tra} \nabla(f_i) \circ \cdots \circ \text{Tra} \nabla(f_0) .$$ We see that $\E^{-1}(s)$ is a linear map from $\E_x$ to $\E_y$. 
\item[$\bullet$] For $x=y$ , we have $\E^{-1}([0,x])= \Id_{\E_x}$.
\item[$\bullet$] for every $s,s'$ in $\P_{\Pcal_1(M)}(x,y)$, and any morphism $u: s \to s'$ then we define $\E^{-1}_{xy}(u)= \Id_{\E^{-1}(s)}$. This definition is well defined because we know that morphisms in $\P_{\Pcal_1(M)}(x,y)$ are generated by the morphisms of type $(\ast)$ and $(\ast \ast)$ as we saw in the proof of Proposition \ref{c-enr}. And one easily see that  the image of a morphism of type $(\ast)$ or $(\ast \ast)$ is the identity, therefore $\E^{-1}(s)= \E^{-1}(s')$.
\item[$\bullet$] Finally for every triple $(x,y,z)$ and every $(t,s)$ in $\P_{\Pcal_1(M)}(y,z) \times \P_{\Pcal_1(M)}(x,y)$ it's easy to see that $$\E^{-1}(t \otimes s)= \E^{-1}(t) \circ \E^{-1}(s).$$
\end{enumerate}
These data satisfy the coherences axioms and $\E^{-1}$ is a \emph{strict} $\Pcal_1(M)$-module (or $\Pcal_1(M)$-point) of $(\Vec, \Id_{\Vec})$. 
\begin{obs}\
\renewcommand\labelenumi{\alph{enumi})}
\begin{enumerate}
\item Since $\Pcal_1(M)$ is a groupoid, every morphism $f:x \to y$ is invertible therefore the induced map \\ $\text{Tra} \nabla(f):\E_x \to \E_y$   is invertible in $\Vec$. Taking $x=y$ we see that $\E^{-1}_{xx}$ is a \textbf{representation} of the (smooth) fundamental group $\pi_1(M,x)$.  Therefore studying $\C$-point  with $\C$ a groupoid becomes important to understand the homotopy of \emph{generalized spaces} $M$.\ \\
\item It's well known that if we consider \underline{flat} connection $\nabla$, then the functor  $\text{Tra} \nabla$ factor through $\Pcal_1(M)$, the fundamental groupoid of $X$. And we can still work in enriched category context.\ \\
\item The idea of thinking a vector bundle on $M$ as an enriched category extend our intuition which consists to `view' a category as a topological space (the classifying space). We can consider a vector bundle with a connection as \textbf{ a linear copy} \footnote{ this terminology matches with the expression `linear representation'} of our space $M$. A point $x$  is identify with the corresponding fiber $\E_x$ and every path from a point $x$ to a point $y$ gives a linear map by parallel transport.\ \\

\item Grothendieck defined the fundamental group in algebraic geometry as the group automorphism  of a fiber functor (see \cite{SGA1}).
This suggests to identify a point $x$ of a generalized space $M$ with  it's \emph{``motivic'' fiber functor} $\tx{Mot}(\omega_x)$ (to be defined). In our terminology we will view $x$ as being over (or taking as `copy') $\tx{Mot}(\omega_x)$. We will then have an enrichment over  the ``category of fiber functors''. Enrichment in this situation can be thought as \emph{giving a copy of $\C$} `of type $\M$'. \ \\
\item We see through out this example how enriched category theory appears in geometry and homotopy context. We saw that if we take $\bf{Vect}$ as our base of enrichment we have a ``linearization'' of the  $1$-homotopy type of $M$.
Now if want more informations on the higher homotopy, we need to replace $\Pi_1(M)$  by $\Pi_{\infty}(M)$ and $\bf{Vect}$ by another base which contains more informations, then doing \textbf{a base changes and base extensions}. 

One can take for example $\bf{SVect,nVect, ChVect, Perf }$, which are ,respectively, the category of simplicial vector spaces, $n$-vector spaces, complex of vector spaces,  perfect complexes. In these categories there is a notion of \emph{weak equivalence} , and we can consider Segal  $\C$-points (or $\C$-module). It appears that having a theory of Segal enriched categories becomes important.
\item A further step will be to consider the notion of \emph{gluing} Segal $\C_i$-points of $\M$ where $\C_i$ is a \emph{covering} of $\C$. This will be part of \cite{SEC2}. 
\end{enumerate}
\end{obs}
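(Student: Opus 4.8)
The plan is to verify the two items of this \textsc{Observations} block that carry genuine mathematical content, namely the invertibility-and-representation claim of item~(a) and the factorization claim of item~(b); items~(c)--(f) are programmatic remarks pointing toward fiber functors à la Grothendieck, base change and extension, and the sequel \cite{SEC2}, and are motivational rather than assertive, so there is nothing to establish there.

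For item~(a) I would argue entirely formally. The assignment $\text{Tra}\nabla$ is a functor $\Pcal_1(M) \to \Vec$, and $\Pcal_1(M)$ is a groupoid: a morphism $f:x\to y$ is a thin-homotopy class of a smooth path, and the reversed path supplies a two-sided inverse $f^{-1}:y\to x$. Since every functor preserves isomorphisms, $\text{Tra}\nabla(f)$ is invertible in $\Vec$ with inverse $\text{Tra}\nabla(f^{-1})$, which is the first sentence. Taking $x=y$, the endomorphism set $\Pcal_1(M)(x,x)$ is a group (the smooth, i.e.\ thin, fundamental group at $x$), and the functor restricts to a homomorphism $\Pcal_1(M)(x,x)\to \text{Aut}_{\Vec}(\E_x)=GL(\E_x)$, that is, a linear representation. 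To identify this with $\E^{-1}_{xx}$, I would recall from the construction of the module $\E^{-1}$ that a length-one chain $[1,f]$ is sent to $\text{Tra}\nabla(f)$ and that concatenation of chains is sent to composition of transports; hence the restriction of $\E^{-1}_{xx}$ to length-one loops is exactly the homomorphism just produced.

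For item~(b) I would invoke the standard curvature-holonomy relation. Flatness of $\nabla$ means its curvature vanishes, and by Ambrose--Singer the holonomy around a small loop is controlled by the curvature; vanishing curvature therefore makes parallel transport invariant under homotopy of paths rel endpoints, not merely under thin homotopy. Thus $\text{Tra}\nabla$ is constant on each ordinary homotopy class and factors through the quotient $\Pcal_1(M)\to \Pi_1(M)$ onto the fundamental groupoid. Combined with item~(a), this upgrades the representation of the smooth fundamental group to a representation of the topological $\pi_1(M,x)$, and the induced $\Pi_1(M)$-module is obtained by precomposing $\E^{-1}$ with $\P$ applied to $\Pcal_1(M)\to \Pi_1(M)$, using the functoriality of $\C\mapsto\P_{\C}$ from Proposition-Definition~\ref{path-bicat}.

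The main obstacle is conceptual rather than computational: one must keep distinct the two fundamental groups in play. Without flatness only the path groupoid $\Pcal_1(M)$, equivalently its thin fundamental group, is available, so the word ``(smooth)'' already inserted in item~(a) is essential — the phrase ``representation of $\pi_1(M,x)$'' must be read through the thin model, and the passage to the honest topological $\pi_1$ is precisely the content supplied by item~(b). I would therefore phrase (a) and (b) so as to name these two groups separately and to record that $\Pi_1(M)$ is a quotient of $\Pcal_1(M)$, and leave items~(c)--(f) to \cite{SEC2} as the text already indicates.
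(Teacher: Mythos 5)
The paper itself offers no proof of this block: it is an \textsc{Observations} environment whose items (a) and (b) are asserted directly on the basis of the construction of $\E^{-1}$ given immediately before, and whose items (c)--(f) are programmatic. Your verification of (a) is exactly the implicit argument: invertibility in $\Pcal_1(M)$ via path reversal, preservation of isomorphisms by the functor $\text{Tra}\nabla$, restriction at $x=y$ to a homomorphism $\Pcal_1(M)(x,x)\to GL(\E_x)$, and identification with $\E^{-1}_{xx}$ through its values on length-one chains; your insistence on distinguishing the thin (smooth) fundamental group from the topological $\pi_1$ is also the right reading of the paper's parenthetical ``(smooth)'', and your item (b), via vanishing curvature giving homotopy invariance rel endpoints, correctly repairs what is in fact a typo in the paper, which writes ``factor through $\Pcal_1(M)$'' where $\Pi_1(M)$ is plainly meant (compare item (e) of the same block).

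One directional slip should be fixed. You say the induced $\Pi_1(M)$-module is obtained ``by precomposing $\E^{-1}$ with $\P$ applied to $\Pcal_1(M)\to\Pi_1(M)$'', but this does not typecheck: writing $q:\Pcal_1(M)\to\Pi_1(M)$ for the quotient, functoriality gives $\P_q:\P_{\Pcal_1(M)}\to\P_{\Pi_1(M)}$, whose codomain is not the source of $\E^{-1}$. The correct construction runs the other way: flatness factors the $1$-functor as $\text{Tra}\nabla=\overline{\text{Tra}\nabla}\circ q$, one lifts $\overline{\text{Tra}\nabla}$ to a strict homomorphism $\Fr(\overline{\text{Tra}\nabla}):\P_{\Pi_1(M)}\to\Vec$ by Observations \ref{obsPC1}, and then $\E^{-1}=\Fr(\overline{\text{Tra}\nabla})\circ\P_q$; that is, $\E^{-1}$ is the pullback of the $\Pi_1(M)$-module along $\P_q$, not conversely. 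The $\Pi_1(M)$-module is produced by descent through the factorization, and this is where your appeal to the functoriality of $\C\mapsto\P_{\C}$ actually enters. With that correction the proposal is complete and matches the paper's intent.
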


\subsection{Quasi-presheaf}\label{q-prsh}\ \\
\begin{df}
Let $\C$ be a small category and $(\M,\W)$ a base of enrichment with $\W$ a class of homotopy $2$-equivalences. A Segal $\M_{\W}$-presheaf in values in $\M$ is a Segal ${\C}^{op}$-point of $(\M,\W)$, that is a $\W$-colax morphism 
$$ \F : \P_{\C^{op}} \to \M.$$  
\end{df}

\begin{ex}\label{aff-alg}
Let's consider the Grothendieck anti-equivalence given by the `global section functor' : 
$$ \tx{Aff}^{~op} \xrightarrow{\Gamma(Z,\O_Z)} \tx{AlgCom}$$ 

We want to consider this functor as a quasi-presheaf which is a real presheaf taking its values in \textbf{Bim}. Recall that \textbf{Bim} is the bicategory described as follows. 
\begin{itemize}
\item Objects are rings : R, S,...
\item a $1$-morphism from R to S is a bimodule $_{S}M_{R}$, 
\item a $2$-morphism from $_{S}M_{R}$ to $_{S}N_{R}$ is a morphism of bimodule,
\item The composition is given by the obvious tensor product.
\end{itemize}
The reader can find a detailed description of \textbf{Bim} in the paper of Bénabou \cite{Ben2}. \ \\

Then the presheaf consists roughly speaking to send
\begin{itemize}
\item each $(Spec(R), \O_{Spec(R)})$ to $R$ 
\item each morphism of schemes $f :Spec(R) \to Spec(S)$ to the $(S,R)$-bimodule $\varphi^{\star}: S \nrightarrow R$, where $\varphi$ is the corresponding ring homomorphism given by the anti-equivalence. 
\end{itemize}
\end{ex}
\begin{note}
In a more compact way we obtain the presheaf using the `embedding' described in \cite{Ben2} from AlgCom to \textbf{Bim}.
\end{note}
\section{Morphisms of path-objects}\label{morph}
\subsection{Transformation}\ \\
\indent We recall briefly the notion of transformation between colax morphisms. 
\begin{df}{\emph{[Transformation]}} \ \\
\indent Let $\B$ and $\M$ be two bicategories and $F=(F,\varphi)$, $G=(G,\psi)$ be \textbf{two colax morphisms} from $\B$ to $\M$. A transformation $\sigma: F \to G$ 
\[
\xy
(0,0)*+{\B}="A";
(30,0)*+{\M}="C";
{\ar@/_1.3pc/_{G}"A";"C"};
{\ar@/^1.3pc/^{F}"A";"C"};
{\ar@{=>}_{\sigma}(15,4);(15,-4)};
\endxy.
\]
is given by the following data and axioms. \\

\emph{Data :}
\begin{itemize}
\item $1$-cells $\sigma_A: FA \to GA$ in $\M$ 
\item Natural transformations 
\[
\xy
(-15,25)*+{\B(A,B)}="A";
(-15,0)*+{ \M(FA,FB)}="B";
(45,25)*+{\M(GA,GB)}="C";
(45,0)*{\M(FA,GB)}="D";
{\ar@{->}^{G_{AB} }"A"; "C"};
{\ar@{->}_{F_{AB} }"A"; "B"};
{\ar@{->}^{-\otimes \sigma_A}"C"; "D"};
{\ar@{->}_{\sigma_B \otimes - }"B"+(10,-1); "D"+(-10,-1)};
{\ar@/_1.8pc/"A"+(2,-2); "D"};
{\ar@/^1.8pc/"A"+(8,-0.7); "D"};
{\ar@{=>}_{~~\sigma_{AB}}(10,10);(21,14)};
\endxy
\]
thus $2$-cells of $\M$,  $\sigma_t : \sigma_B \otimes Ft \to Gt \otimes \sigma_A$, for each $t$ in  $\B(A,B)$.
\end{itemize}
\emph{Axioms :} \\

The following commute :
\[
\xy
(-50,20)*+{\sigma_C \otimes F(t\otimes s)}="A";
(30,20)*+{G(t \otimes s) \otimes \sigma_A}="B";
(-50,0)*+{\sigma_C \otimes (Ft \otimes Fs)}="C";
(30,0)*+{(Gt \otimes Gs) \otimes \sigma_A}="D";
(-50,-15)*+{(\sigma_C \otimes Ft) \otimes Fs}="M";
(-23,-30)*+{(Gt \otimes \sigma_B)\otimes Fs}="N";
(11,-30)*+{Gt \otimes (\sigma_B \otimes Fs)}="O";
(30,-15)*+{Gt \otimes (Gs \otimes \sigma_A)}="P";
{\ar@{->}_{\Id \otimes \sigma_s}"O";"P"};
{\ar@{->}_{\sigma_t \otimes \Id}"M";"N"};
{\ar@{->}^{\sigma_{t \otimes s}}"A";"B"};
{\ar@{->}_{\Id \otimes \varphi}"A";"C"};
{\ar@{->}^{\psi \otimes \Id}"B";"D"};
{\ar@{->}_{a^{-1}}"C";"M"};
{\ar@{->}_{a^{-1}}"P";"D"};
{\ar@{->}^{a}"N";"O"};
{\ar@{.>}^{}"C";"D"};
\endxy
\]

\[
\xy
(10,20)*+{\sigma_A \otimes FI_A }="A";
(35,0)*+{\sigma_A}="M";
(60,20)*+{GI_{A} \otimes \sigma_A}="B";
(10,0)*+{\sigma_{A} \otimes I_{FA}}="C";
(60,0)*+{I_{GA} \otimes \sigma_A}="D";
{\ar@{->}_{r}^{\sim}"C";"M"};
{\ar@{->}_{l^{-1}}^{\sim}"M";"D"};
{\ar@{->}_{\Id \otimes \varphi_A}"A";"C"};
{\ar@{->}^{\psi_A \otimes \Id}"B";"D"};
{\ar@{->}^{\sigma_{I_A}}"A";"B"};
\endxy
\]
\end{df}

\begin{rmk}
When all the $1$-cells  $\sigma_A: FA \to GA$ are identities we will not represent them in the diagrams. 
\end{rmk}
\subsection{Morphism of path-objects}\ \\

\indent In this section we're going to define what is a morphism between $\C$-point and $\D$-point of $(\M,\W)$, for $\C$ and $\D$ two small categories, we call them \textbf{pré-morphisms}. We will  see in a moment  that the morphisms of  points of $(\M, \W)$ which are relevant to enrichment behave exactly as morphisms of vector bundle over $\M$ , which means \emph{fiber wise} compatible. This is not surprising because it only makes sense to speak about `morphism' between  enriched categories  having the same `type of enrichment'. When $\M$ has one object then this condition will be fulfilled but the morphisms we consider are more general than a classical morphisms between enriched categories.\ \\

Recall that for any category $\C$, by construction of $\P_{\C}$ we have $Ob(\C)=Ob(\P_{\C})$. Moreover any functor \\ $\Sigma: \C \to \D$ extends to a strict  homomorphism $\P_{\Sigma} :\P_{\C} \to \P_{\D}$. 

\begin{df}\label{premor}
Let $F:\P_{\C} \to \M$ and $G: \P_{\D} \to \M$ be respectively  two path-objects of $(\M,\W)$. An \textbf{$\M$-premorphism} from $F$ to $G$, is a pair $\Sigma=(\Sigma,\sigma)$ consisting of a functor $\Sigma : \C \to \D$ together with a transformation of morhphism of bicategories
$\sigma :  F  \longrightarrow G \circ \P_{\Sigma}$
\[
\xy
(-18,0)*+{\P_{\C}}="X";
(30,0)*+{\P_{\D}}="Y";
(9,-18)*+{\M}="E";
{\ar@{->}^{\P_{\Sigma}}"X";"Y"};
{\ar@{->}_{F}"X";"E"};
{\ar@{->}^{G}"Y";"E"};
{\ar@{=>}^{\sigma}(1,-7);(15,-7)};
\endxy
\]

An $\M$-premorphism is called an $\M$-morphism if all the $1$-cells $\sigma_A$ are identities. In particular if $A$ is over $U \in Ob(\M)$ then so is $\Sigma A$ (see \emph{Figure 4} below).  

\end{df}
\includegraphics[width=15cm,height=10cm]{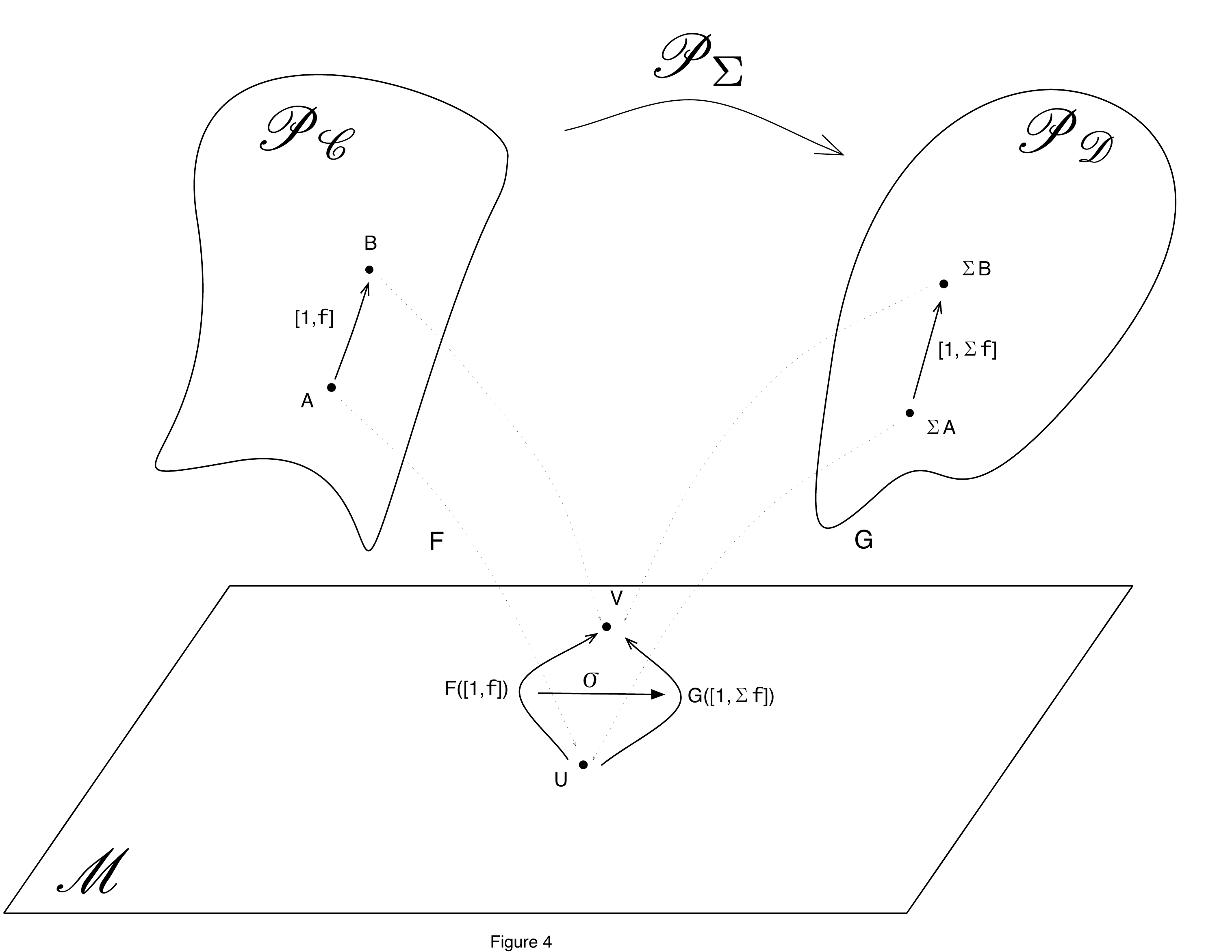}

\begin{obs}
For nonempty set $X$ and $Y$, if $F$  and $G$ are respectively  strict $\ol{X}$-point and $\ol{Y}$-point of $\M$, it's easy to check that an $\M$-morphism is exactly an $\M$-functor from $\M^{X}_{F}$ to $\M^{Y}_{G}$ in the classical sense.  
\end{obs}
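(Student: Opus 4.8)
The plan is to construct a bijection between $\M$-morphisms from $F$ to $G$ and classical $\M$-functors from $\M^{X}_{F}$ to $\M^{Y}_{G}$, matching the data on each side and then checking that the transformation axioms translate into the enriched-functoriality axioms. First I would use the elementary fact that, $\ol{Y}$ being a coarse category, a functor $\Sigma : \ol{X} \to \ol{Y}$ is nothing but a function $X \to Y$ on objects, the image of the unique arrow $(A,B)$ being forced to equal $(\Sigma A, \Sigma B)$. By functoriality and strictness, $\P_{\Sigma}$ then sends $[1,(A,B)]$ to $[1,(\Sigma A,\Sigma B)]$ and a general chain to the concatenation of the images of its links. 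This identifies the object-assignment of a prospective $\M$-functor with $\Sigma$.

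Next, because an $\M$-morphism has all $1$-cells $\sigma_{A}=\Id_{FA}$, one has $FA=G(\Sigma A)$, and each structure $2$-cell $\sigma_{t}:\sigma_{B}\otimes Ft \to (G\P_{\Sigma})t\otimes\sigma_{A}$ reduces, modulo unit isomorphisms, to a map $\sigma_{t}:Ft \to (G\P_{\Sigma})t$. I would define the candidate functor $H$ by $HA=\Sigma A$ and $H_{AB}=\sigma_{[1,(A,B)]}:\M^{X}_{F}(A,B)\to\M^{Y}_{G}(\Sigma A,\Sigma B)$. The heart of the argument is to read off the transformation axioms on the two families of generating $2$-cells of $\P_{\ol{X}}(A,B)$, namely the composition arrows $(\ast)$ and the identity-insertion arrows $(\ast\ast)$ of the proof of Proposition \ref{c-enr}. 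In the strict case the colaxity maps and the associators are identities, so the composition (hexagon) axiom collapses to $\sigma_{t\otimes s}=\sigma_{t}\otimes\sigma_{s}$ by bifunctoriality of $\otimes$; in particular $\sigma_{[2,(A,B,C)]}=H_{BC}\otimes H_{AB}$. Naturality of $\sigma_{AB}$ along $[2,(A,B,C)]\xrightarrow{!}[1,(A,C)]$ then reads $H_{AC}\circ c^{F}_{ABC}=c^{G}_{HA,HB,HC}\circ(H_{BC}\otimes H_{AB})$, which is exactly the compatibility of $H$ with composition. Likewise, the unit axiom forces $\sigma_{[0,A]}$ to be the unit isomorphism, and naturality along $[0,A]\xrightarrow{!}[1,(A,A)]$ gives $H_{AA}\circ I^{F}_{A}=I^{G}_{HA}$, the compatibility with units. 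Hence every $\M$-morphism determines an $\M$-functor.

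For the converse I would start from an $\M$-functor $H$, set $\Sigma=H$ on objects (a unique functor since $\ol{Y}$ is coarse), take all $\sigma_{A}=\Id$, and put $\sigma_{[1,(A,B)]}=H_{AB}$ and $\sigma_{[0,A]}=\Id$. I would extend $\sigma$ to an arbitrary chain $s=[n,A\to A_{1}\to\cdots\to B]$ by declaring $\sigma_{s}$ to be the front-parenthesised tensor product of the corresponding components $H_{A_{i}A_{i+1}}$; strictness of $F$ and $G$ ensures that the source and target of $\sigma_{s}$ are the correct iterated tensor products. Running the previous computations backwards, the fact that $H$ preserves composition and units yields precisely the composition and unit axioms for $\sigma$, so $(\Sigma,\sigma)$ is an $\M$-morphism. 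The two assignments are visibly mutually inverse, since each recovers $H_{AB}$ as $\sigma_{[1,(A,B)]}$.

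The step I expect to be the main obstacle is the verification that the extended assignment $s\mapsto\sigma_{s}$ is natural in $s$ with respect to \emph{all} morphisms of the posetal category $\P_{\ol{X}}(A,B)$, and not merely the generators $(\ast)$ and $(\ast\ast)$, and that it satisfies the coherence diagrams of a transformation. This is a well-definedness (coherence) check, but here the strictness of $F$ and $G$ turns the iterated colaxity maps into identities and lets the associators act trivially on the front-parenthesised products, so the verification collapses to repeated use of the bifunctoriality and associativity of $\otimes$, exactly as in the closing paragraph of the proof of Proposition \ref{c-enr}.
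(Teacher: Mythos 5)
Your proposal is correct and fills in exactly the verification the paper leaves implicit (the paper offers no proof beyond ``it's easy to check''): unwinding Definition \ref{premor} with $\sigma_A=\Id$, reading the transformation axioms on the generating $2$-cells $(\ast)$ and $(\ast\ast)$, and handling coherence via strictness and naturality of the associativity constraint, just as in the closing paragraph of the proof of Proposition \ref{c-enr}. Your identification of the extension of $\sigma$ to arbitrary chains as the main well-definedness point, resolved by reduction to the generators and bifunctoriality of $\otimes$, is precisely the intended argument.
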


\subsection{Bimodules}\label{bimod}
\begin{warn}
We remind the reader that the composition in an $\M$-category is presented here in this order :
 $$\C(B,C) \otimes \C(A,B) \to \C(A,C).$$ 
Then each $\C(A,B)$ is a $(\C(B,B), \C(A,A))$-bimodule with $\C(B,B)$ \textbf{acting on the left} and $\C(A,A)$ \textbf{on the right}.

But as one can see if the composition was presented as : $\C(A,B) \otimes \C(B,C) \to \C(A,C)$, then the action of $\C(B,B)$ would have been on the right. 
\end{warn}
\ \\
We saw that a monoid $T$ (or monad) in $(\M,\W)$ is given by a strict $\bf{1}$-point that is a homomorphism : 
$$T: \P_{\bf{1}} \to \M.$$

Let $\bf{2}$ be the posetal category described as follows. \\
$Ob(\bf{2})=\{0,1\}$ and
\begin{equation*}
\bf{2}(i,j) =
  \begin{cases}
     \{(i,j) \} & \text{if $i<j$} \\
     \{\Id_i=(i,i) \}  & \text{if $i=j$ }\\
     \varnothing & \text{if $i>j$ }
  \end{cases}
\end{equation*}
The composition is the obvious one.\\

We have two functors : $\bf{1} \xrightarrow{i_0} \bf{2}$ and $\bf{1} \xrightarrow{i_1} \bf{2}$. These functors induce by functoriality two functors $\P_{i_0}$ and $\P_{i_0}$ from $\P_{\bf{1}}$ to  $\P_{\bf{2}}$.

\begin{df}
Let $T_0$, $T_1$ be two Segal $\bf{1}$-points of $(\M,\W)$. A bimodule from $T_0$ to  $T_1$ is a Segal path-object 
$$ \Psi:\P_{\bf{2}} \to \M $$
such that  $\Psi \circ \P_{i_0}= T_0$ and $\Psi \circ \P_{i_1}= T_1$.
\end{df}

This definition has a natural generalization for every $\C$-point and $\D$-point of $(\M,\W)$. \ \\

\paragraph*{\textbf{The general case}} \ \\
\indent All allong this work we've always identified monoids with enriched categories with one object. Now for bimodules in $\M$ , e.g $\C(A,B)$, we want to identify them with \textbf{oriented enriched categories} having two objects. Here by `oriented' we mean that there may not be a hom-object between some pair of objects.\ \\

We saw previously, that in some cases, given a $\C$-point $F: \P_{\C} \to \M$ we want to identify $F$ with a generalized $\M$-category $\M^{\C}_{F}$. In the following we're going to express the classical notion of bimodule (also called distributor, profunctor or module) using path-objects. We will express everything in term of morphisms of path-object but one should keep in mind that these definitions generalize the classical ones.\ \\

Our idea to define  a bimodule in general  between a $\C$-point  and a $\D$-point  is to consider an $\E$-point,  where $\E$ contains both $\C$ and $\D$ together with an `order' in $\E$ between the objects of $\C$ and $\D$. This lead us to introduce the following. 

\begin{df}
Let $\C$ and $\D$ be two small categories. A \textbf{bridge} from $\C$ to $\D$ (resp. $\D$ to $\C$) is a category $\E$ equipped with two embedding \footnote{By `embedding' we mean injective on object and fully faithfull} functors 
$$\E_{|{\C}}: \C \to \E, \ \ \E_{|{\D}}: \D \to \E$$
such that for every $A$ in $Ob(\C)$ and $B$ in $Ob(\D)$ we have
$\E(B,A)= \varnothing$ (resp. $\E(A,B)= \varnothing$). \footnote{We've identified $A$ with $\E_{|{\C}}(A)$ and $B$ with $\E_{|{\D}}(B)$}\ \\

A morphism of bridges is a functor $\beta : \E \to \G$ such that : $\beta \circ \E_{|{\C}} = \G_{|{\C}}$ and  
$\beta \circ \E_{|{\D}} = \G_{|{\D}}$.\\

A bridge $\E$ is said to be \textbf{rigid} if $Ob(\E) \cong Ob(\C) \coprod Ob(\D)$. 
\end{df}
\begin{ex}\ \
\renewcommand\labelenumi{\alph{enumi})}
\begin{enumerate}
\item The first example is the previous category $\bf{2}$ which is a bridge from $\bf{1}$ to $\bf{1}$.\ \\

\item In the following we're going to construct the `thin' bridge between any small categories. 

Let's denote by $\C \prec \D$ the small category described as follows. \\

We take $Ob(\C \prec \D) = Ob(\C) \coprod Ob(\D)$. \\

\begin{equation*}
[\C \prec \D](A,B) =
  \begin{cases}
     \C(A,B) & \text{if $(A,B) \in Ob(\C) \times Ob(\C)$} \\
     \D(A,B) & \text{if $(A,B) \in Ob(\D) \times Ob(\D)$} \\
     \{(A,B)\} \cong 1  & \text{if $(A,B) \in Ob(\C) \times Ob(\D)$ }\\
     \varnothing & \text{if $(A,B) \in Ob(\D) \times Ob(\C)$ }
  \end{cases}
\end{equation*}
\ \\
The composition is given by the following rules.
\begin{itemize}
\item For $A$ in $Ob(\C)$ and $B$ in $Ob(\D)$ ,  if $O$ is an object of $\C$ then the composition $c_{OAB}$ is the constant (unique) function which sends every pair $[f,(A,B)]$ to $(O,B)$.
\item Similary if $P$ is an object of $\D$, then the composition $c_{ABP}$ is the constant function which sends every pair $[(A,B),g]$ to $(A,P)$.
\item The restriction of the composition to $\C$ ( resp. to $\D$)  is the original one. 
\end{itemize}
\ \\
One easily check that $\C \prec \D$ is a category and we have two canonical embeddings : $i_{\C}: \C \to (\C \prec \D)$ and  $i_{\D}: \D \to (\C \prec \D)$.\ \\   

\end{enumerate}
\end{ex}
\begin{rmk}\ \
It's easy to see that $(\C \prec \D)$ is the terminal rigid bridge. In some cases depending of the base $\M$ we will only consider this terminal rigid bridge.
\end{rmk}
\begin{nota}
We will denote by $\P_{\C \hookrightarrow \E}$ and $\P_{\D \hookrightarrow \E}$ the induced embeddings on the $2$-path-categories. 
\end{nota}
Bridges and classical bimodules (or distributors) are connected by the following proposition. The reader can find an account on distributors in \cite{Ben-dis},\cite{borceux_1}, \cite{Law}, \cite{Str}. 
\begin{prop}
We have an equivalence between the following data.
\begin{itemize}
\item A distributor $\X : \D \to \widehat{\C}$
\item A rigid bridge $\E$ from $\C$ to $\D$ 
\end{itemize}
This equivalence is an equivalence of categories.
\end{prop}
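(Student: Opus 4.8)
The plan is to reduce both sides of the asserted equivalence to a single intermediate object — a functor $H\colon\C^{op}\times\D\to\mathbf{Set}$ — and then read the two structures off $H$. On the distributor side this is immediate: by definition a distributor $\X\colon\D\to\widehat{\C}$ is a functor into the presheaf category $\widehat{\C}=[\C^{op},\mathbf{Set}]$, and the exponential law $[\C^{op}\times\D,\mathbf{Set}]\cong[\D,[\C^{op},\mathbf{Set}]]$ (currying) identifies such functors, naturally, with functors $H\colon\C^{op}\times\D\to\mathbf{Set}$ via $\X(B)=H(-,B)$. So I regard a distributor simply as such an $H$, and the same law identifies natural transformations of distributors with natural transformations $H\Rightarrow H'$.

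First I would build the passage from rigid bridges to distributors. Given a rigid bridge $\E$, the embeddings $\E_{|\C}$, $\E_{|\D}$ are fully faithful and (after the identification $Ob(\E)\cong Ob(\C)\coprod Ob(\D)$) the identity on objects, so the only data not already fixed are the hom-sets $\E(A,B)$ with $A\in Ob(\C)$, $B\in Ob(\D)$; recall $\E(B,A)=\varnothing$. Set $H_\E(A,B):=\E(A,B)$. Precomposition with $\C$-arrows makes $H_\E$ contravariant in $A$ (the right $\C$-action) and postcomposition with $\D$-arrows makes it covariant in $B$ (the left $\D$-action). The compatibility of these two one-sided actions is precisely the associativity law of $\E$ for a composable triple $A'\to A\to B\to B'$ with the first arrow in $\C$ and the last in $\D$; this is exactly what it means for $H_\E$ to be a functor on the product $\C^{op}\times\D$, hence a distributor $\X_\E$.

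Conversely, from $H$ I would reconstruct a category $\E_H$ with $Ob(\E_H)=Ob(\C)\coprod Ob(\D)$ and
$$\E_H(A,B)=\begin{cases}\C(A,B)&A,B\in Ob(\C)\\ \D(A,B)&A,B\in Ob(\D)\\ H(A,B)&A\in Ob(\C),\ B\in Ob(\D)\\ \varnothing&A\in Ob(\D),\ B\in Ob(\C),\end{cases}$$
where composition is the given one inside $\C$ and inside $\D$, the mixed composites $\C(A',A)\times H(A,B)\to H(A',B)$ and $H(A,B)\times\D(B,B')\to H(A,B')$ are the two variance actions of $H$, and any composite landing in a $\D$-to-$\C$ slot is vacuously empty. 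This generalizes the thin bridge $\C\prec\D$ (the case $H\equiv 1$), of which $\mathbf{2}$ is the instance $\C=\D=\mathbf{1}$. The embeddings $\C\hookrightarrow\E_H\hookleftarrow\D$ are tautological, injective on objects and fully faithful, so $\E_H$ is a rigid bridge. The two constructions are mutually inverse: reading the mixed homs off $\E_H$ returns $H$, and rebuilding from $H_\E$ returns $\E$ since its $\C$- and $\D$-parts were already fixed. For morphisms, a morphism of bridges $\beta\colon\E\to\G$ is forced to be the identity on objects and to agree with the fixed embeddings on $\C$- and $\D$-arrows, so it carries only the data of maps $\E(A,B)\to\G(A,B)$ commuting with both actions — exactly a natural transformation $H_\E\Rightarrow H_\G$. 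Hence the assignments are functorial and mutually inverse, giving the asserted equivalence (indeed an isomorphism) of categories.

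The main obstacle — really the only point that is not formal — is the verification in the third step that the associativity and unit axioms of $\E_H$ hold, and conversely that the associativity of $\E$ forces the two actions of $H_\E$ to be compatible. Both reduce to the single identity $h\cdot(g\cdot f)=(h\cdot g)\cdot f$ for $f\in\C(A',A)$, $g\in H(A,B)$, $h\in\D(B,B')$, which is simultaneously the mixed associativity in the bridge and the bifunctoriality of $H$ on $\C^{op}\times\D$; once this single coincidence is identified, the remaining unit laws and the naturality checks are routine bookkeeping.
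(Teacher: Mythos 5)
Your proposal is correct and is essentially the paper's own argument: the paper's Step 1 defines $\X(\E)(D)(A)=\Hom(\E_{|\C}(A),D)=\E(A,D)$ and its Step 2 rebuilds the bridge with $\E(A,D):=\X(D)(A)$ and composition given by the two actions $\X(D)(f)$ and $\X(g)_A$, which is exactly your $H_\E$ and $\E_H$ up to the standard currying $[\D,\widehat{\C}]\cong[\C^{op}\times\D,\mathbf{Set}]$. Your treatment of morphisms of bridges versus natural transformations fills in a verification the paper's sketch leaves implicit, but the route is the same.
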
\label{bridge}
\begin{proof}[\scshape{Sketch of proof}]
\item[\textbf{Step 1}.]Given a bridge $\E$ from $\C$ to $\D$ one define the associated distributor $\X(\E): \D \to \widehat{\C}$ by the functor of points  $\X(\E)(D):=\Hom(\E_{|{\C}}(-),D)$ and similary on morphisms $\X(\E)(f):=\Hom(\E_{|{\C}}(-),f)$. \ \\
\\
\item[\textbf{Step 2}.]Conversely given a distributor $\X : \D \to \widehat{\C}$, we define the associated bridge $\E(\X)$ as follows.\ \\  

Set  $Ob(\E(\X)) = Ob(\C) \coprod Ob(\D)$. \ \\

The restriction of $\E$ to $Ob(\C)$ (resp. $Ob(\D)$) is equal to $\C$ (resp. $\D$) and 
for $A$ in $Ob(\C)$ and $D$ in $Ob(\D)$ we take $\E(A,D):= \X(D)(A)$. \ \\

We define the composition in the following manner.
\begin{itemize}

\item For a triple of object $(A,A',D)$ with $A,A'$ in $Ob(\C)$ and $D$ in $Ob(\D)$, the composition function is given by $$c_{AA'D} : \X(D)(A') \times \C(A,A') \to \X(D)(A)$$ 

which sends each element $(a,f)$  of $\X(D)(A') \times \C(A,A')$ to $\X(D)(f)a$.\ \\
\item Similary given $D$, $D'$ two objects of $\D$ and $A$ an object of $\C$ then
$$c_{ADD'} : \D(D,D') \times \X(D)(A) \to \X(D')(A)$$ 
sends an element $(g,b)$ of $\D(D,D') \times \X(D)(A)$ to $\X(g)_A(b)$, where $\X(g)_A$ is the component at $A$ of the natural transformation $\X(g) : \X(D) \to \X(D')$.  
\end{itemize}
\end{proof}
\begin{df}
Let $F:\P_{\C} \to \M$ and $G: \P_{\D} \to \M$ be respectively  two Segal $\C$-point and $\D$-point of $(\M,\W)$ and $\E$ a rigid bridge from $\C$ to $\D$, 
\begin{itemize}
\item An $\E$-$(G,F)$-bimodule  $\Psi: G  \nrightarrow  F$ is a Segal $\E$-point of $(\M,\W)$
$$\Psi : \P_{\E} \to \M $$
satisfying the `boundary conditions': $\Psi \circ \P_{\C \hookrightarrow\E} =F$   and $\Psi \circ \P_{\D \hookrightarrow \E} = G$ \ \\
\item Given $\Psi_1$, $\Psi_2$ two $\E$-$(G,F)$-bimodules, a morphism of bimodules  from $\Psi_1$ to $\Psi_2$ is an $\M$-morphism $(\Id_{\E}, \Theta)$ which induces the identity on both $F$ and $G$.\ \\
\item More generally, let $\E_1$ and $\E_2$  be two rigid bridges  from $\C$ to $\D$ and $\Psi_1$ (resp. $\Psi_2$) be an  $\E_1$-$(G,F)$-bimodule  (resp. $\E_2$-$(G,F)$-bimodule). 

A morphism of $(G,F)$-bimodules from $\Psi_1$ to $\Psi_2$ is an $\M$-morphism 
$$\Sigma =(\Sigma,\sigma) : \Psi_1  \to \Psi_2$$ 
such that the induced morphism from $\Psi_1$ to $\Sigma^{\star}\Psi_2$ is a morphism of $\E_1$-$(G,F)$-bimodules. Here $\Sigma^{\star}\Psi_2$ is the obvious pullback of $\Psi_2$ along $\Sigma$.
\end{itemize}
\end{df}

\begin{obs}\ \
\renewcommand\labelenumi{\alph{enumi})}
\begin{enumerate}
 \item To understand what's really happening in this definition it suffices to write it when $\C =\ol{X}$, $\D =\ol{Y}$, $\E=(\ol{X} \prec \ol{Y})$ and $F$, $G$ and $\Psi$ are respectively  strict $\ol{X}$-point, $\ol{Y}$-point and  $(\ol{X} \prec \ol{Y})$-point of $\M$. \ \\

Let $\Psi= _{G}\Psi_{F}$ be an $(\ol{X} \prec \ol{Y})$-strict point of $\M$. Given a pair $(P,Q)$ of objects of $\ol{X}$ and an object $R$ of $\ol{Y}$, we have by definition of $\Psi$ the following span of the same type as the ones which give the composition in both $\M^{X}_{F}$ and  $\M^{Y}_{G}$
\[
\xy
(-10,0)*+{\Psi_{QR}([1,(Q,R)]) \otimes \Psi_{PQ}([1,(P,Q)])}="X";
(50,0)*+{\Psi_{PR}([1,(P,R)])}="Y";
(25,2)*+{c_{PQR}}="c";
(22,22)*+{\Psi_{PR}([2,(P,Q,R)])}="Z";
{\ar@{->}_{\varphi(P,Q,R)}"Z"; "X"};
{\ar@{->}^{~~~~\Psi_{PR} \{[2,(P,Q,R)] \xrightarrow{!} [1,(P,R)] \} }"Z"; "Y"};
{\ar@{.>}"X";"Y"};
\endxy
\]
\ \\
And the condition $\Psi \circ \P_{i_{\C}}=F$ says that  $\Psi_{PQ}([1,(P,Q)])= F_{PQ}([1,(P,Q)])=\M^{X}_{F}(P,Q)$ and we have a map $$c_{PQR} : \Psi_{QR}([1,(Q,R)]) \otimes \M^{X}_{F}(P,Q) \to \Psi_{PR}([1,(P,R)]).$$ 
\ \\
Similary if we take one object $Q$ in $\ol{X}$ and two objects $R,S$ in $\ol{Y}$, we will have a map 
$$c_{QRS} : \Psi_{QR}([1,(Q,R)]) \otimes \M^{Y}_{G}(R,S) \to \Psi_{QS}([1,(Q,S)]).$$ 

It's clear that these data together with unity and the associativity coherences contained in the definition of $\Psi$ give a bimodule (also called distributor, profunctor or module) from $\M^{X}_{F}$ to $\M^{Y}_{G}$ in the classical sense.\ \\
\item We have the classical fact any $\M$-morphism $\Sigma=(\Sigma,\sigma)$  from $F$ to $G$ induces two bimodules : 
$\Sigma^{\star}: F \nrightarrow G$ and  $\Sigma_{\star} : G  \nrightarrow  F$, see  \cite{Ben-dis}, \cite{borceux_1} \cite{Law} \cite{Str}, for a description.
\end{enumerate}
\end{obs}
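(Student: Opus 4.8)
The plan is to reduce the claim to the enrichment dictionary already established in Propositions \ref{c-enr} and \ref{bicat-enr}, and then read off, from the resulting $\M$-category on $X \coprod Y$, the family of hom-objects and action maps that constitute a classical bimodule. First I would observe that the terminal rigid bridge $\ol{X} \prec \ol{Y}$ is a \emph{preorder}: between any ordered pair of objects there is at most one arrow --- the coarse arrow inside $\ol{X}$, the coarse arrow inside $\ol{Y}$, the unique bridging arrow $(P,R)$ for $P \in X$ and $R \in Y$, and no arrow from $Y$ to $X$. Hence the construction of Proposition \ref{c-enr}, extended to an arbitrary bicategory $\M$ as in Proposition \ref{bicat-enr}, applies verbatim to the strict $(\ol{X}\prec\ol{Y})$-point $\Psi$ and produces a genuine $\M$-category $\M^{\ol{X}\prec\ol{Y}}_{\Psi}$ with object set $X \coprod Y$, whose hom-object from $A$ to $B$ is $\Psi_{AB}([1,(A,B)])$ whenever the bridge has an arrow $A \to B$. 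The boundary conditions $\Psi \circ \P_{\C\hookrightarrow\E} = F$ and $\Psi \circ \P_{\D\hookrightarrow\E} = G$ identify the full sub-$\M$-categories on $X$ and on $Y$ with $\M^{X}_{F}$ and $\M^{Y}_{G}$ respectively.

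Next I would name the bimodule data. Put $H(P,R) := \Psi_{PR}([1,(P,R)])$ for $P \in X$, $R \in Y$; by the Warning in Section \ref{bimod} this carries a left action of $\M^{Y}_{G}$ and a right action of $\M^{X}_{F}$. The right action $H(Q,R) \otimes \M^{X}_{F}(P,Q) \to H(P,R)$ is the composition $c_{PQR}$ of $\M^{\ol{X}\prec\ol{Y}}_{\Psi}$ for the string $(P,Q,R)$ with $P,Q \in X$ and $R \in Y$, while the left action $\M^{Y}_{G}(R,S) \otimes H(Q,R) \to H(Q,S)$ is $c_{QRS}$ for $Q \in X$ and $R,S \in Y$. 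Each of these is, exactly as in Proposition \ref{c-enr}, the composite
$$ c_{PQR} = \Psi_{PR}\{[2,(P,Q,R)]\xrightarrow{!}[1,(P,R)]\}\,\star\,\varphi(P,Q,R)^{-1}, $$
obtained by running the Segal span through the invertible colaxity map $\varphi$ (invertible, indeed an identity, because $\Psi$ is strict) followed by the image of the canonical collapsing $2$-cell in $\P_{\ol{X}\prec\ol{Y}}$.

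Then I would verify the distributor axioms, all of which are special cases of the single associativity and two identity axioms of $\M^{\ol{X}\prec\ol{Y}}_{\Psi}$. Associativity and the unit law of the right action are the instances of those axioms for strings with all-but-last entry in $X$ and last entry in $Y$; associativity and the unit law of the left action come from strings with first entry in $X$ and the rest in $Y$. The one genuinely mixed condition --- that the two actions commute --- is the associativity coherence applied to the length-$3$ chain $[3,(P,Q,R,S)]$ with $P,Q \in X$ and $R,S \in Y$: its two factorisations through length-$2$ chains are joined by the posetal $2$-cells of $\P_{\ol{X}\prec\ol{Y}}$, and applying $\Psi$ together with the invertible Segal maps turns the resulting square into the statement that both bracketings of $\M^{Y}_{G}(R,S) \otimes H(Q,R) \otimes \M^{X}_{F}(P,Q) \to H(P,S)$ agree.

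The hard part will be the bookkeeping forced by $\M$ being a true bicategory rather than a strict monoidal category: every diagram above must be threaded through the associators and unitors of $\M$ and through the invertible $\varphi$'s, exactly as in the coherence check of Proposition \ref{c-enr}. No new phenomenon appears --- the mixed-associativity square is the only piece not literally contained in that earlier verification, and the only subtlety there is to use a coherent choice of weak inverses of the Segal maps so that the two composites are manifestly equal. Conversely, the same dictionary reassembles any classical $(G,F)$-bimodule into such a strict $(\ol{X}\prec\ol{Y})$-point, which is exactly why one may regard a bimodule as an oriented two-object enriched category in the sense announced before the definition.
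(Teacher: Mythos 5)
Your proposal is correct and takes essentially the same route as the paper's own observation: extract the hom-objects $\Psi_{PR}([1,(P,R)])$ and the two action maps $c_{PQR}$, $c_{QRS}$ from spans of the same type as in Proposition \ref{c-enr} using the boundary conditions $\Psi \circ \P_{\C \hookrightarrow \E}=F$ and $\Psi \circ \P_{\D \hookrightarrow \E}=G$, then deduce the distributor axioms from the unity and associativity coherences packaged in the definition of $\Psi$. Your additions merely make explicit what the paper declares ``clear'' --- notably the mixed compatibility of the two actions via the length-$3$ chains $[3,(P,Q,R,S)]$ --- and your ordering $\M^{Y}_{G}(R,S)\otimes\Psi_{QR}([1,(Q,R)])$ for the left action is in fact the one consistent with the composition convention of the Warning, where the paper's displayed formula has the factors transposed.
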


\begin{rmk}\ \
\renewcommand\labelenumi{\alph{enumi})}
\begin{enumerate}
\item We can define the classical operations such as composite or `tensor product' of a $\E$-$(G,F)$-bimodule by another  $\E'$-$(F,D)$-bimodule but the existence of such $(G,D)$-bimodule will involve some cocompleteness conditions on the hom-categories in $\M$. The idea consists to consider the `composite bridge' which is given by the composite of the corresponding distributors and define a path-object satisfying the `boundary conditions'. \\
\item With this composite we can define a category of `enriched distributors' in a suitable manner. We will come back to this when we will give a model structure in \cite{SEC2}.
\end{enumerate}
\textbf{For the moment we will assume that $\M$ is `big and good' enough to have all these operations}. We will denote by $\M$-Dist the bicategory described as follows.
\begin{itemize}
\item Objects are Segal path-objects $(\C,F)$
\item Morphisms are Bimodules
\item $2$-morphisms are morphism of bimodules.  
\end{itemize}
\end{rmk}

\paragraph*{\textbf{Presheaves on path-object}} \ \\
\indent For a given Segal path-object $F :\P_{\C} \to \M$, denote by $\M^{\C}_{F}$ the corresponding generalized Segal $\M$-category. In the following we give the definition of the analogue of a presheaf on $\M^{\C}_{F}$, that is functor from $(\M^{\C}_{F})^{op}$ to $\M$. When $\M$ is $(\bf{ChVect,\otimes_k,k})$, then $\M^{\C}_{F}$ will be a generalized DG-category, and a functor $\A :(\M^{\C}_{F})^{op} \to \M$ is sometimes called `$\A$-DG-module' or simply $\A$-module. So in general we may call such a functor an $\M$-module, like in \cite{Str}.

\begin{nota}\ \
\renewcommand\labelenumi{\alph{enumi})}
\begin{enumerate}
\item We will denote by $\eta$ the `generic object' of $\M$, which consist to select an object $U$ of $\M$ with it's identity arrow $\Id_U$.  We have $$\eta_U: \P_{\bf{1}} \xrightarrow{[U, \Id_U]} \M$$ 
 which express $\Id_U$ as the trivial monoid. We will identify $\eta_U$ with $U$. When $\M$ has one object, hence a monoidal category there is only one generic object. $\eta_U$ is sometimes denoted simply $U$ or $\hat{U}$.   
\item For an object $A$ of $\C$, we have the canonical distributor $h_A: \bf{1} \to \widehat{\C}$ which consists to select the functor of points $\C(-,A)$. We will denote by $\E(\C)_A$ the associated bridge from $\C$ to $\bf{1}$ given by Proposition \ref{bridge}.

Taking  $\bf{1}=\{\star, \Id_{\star}\}$, $\E(\C)_A$ is described as follows.\ \\
\begin{itemize}
\item[-] $Ob(\E(\C)_A)= Ob(\C) \coprod \{\star\}$ \ \\
\item[-] For every $B$ in $Ob(\C)$ we define $\E(\C)_A(B,\star):=\C(B,A)$ and , $\E(\C)_A(\star, B):= \varnothing$ \\ 
\item[-] We take $\E(\C)_A(\star,\star)= \{\Id_{\star}\} $\\
\end{itemize}  
The composition is the obvious one and we check easily that $\E(\C)_A$ is a rigid bridge from $\C$ to $\bf{1}$.\\
\item In general a distributor $\bf{1} \to \widehat{\C}$ consists precisely to select an object of $\widehat{\C}$, say $\Zcal$, and we will denote by $\E_{\Zcal}$ the corresponding bridge from $\C$ to $\bf{1}$. \\
\end{enumerate} 
\end{nota} 

\begin{df}
Let $F :\P_{\C} \to \M$ be a Segal path-object. We denote by $\Pcal F$ the category described as follows. 
\renewcommand\labelenumi{\alph{enumi})}
\begin{enumerate}
\item Objects are $(\eta_U,F)$-bimodules i.e Segal path-object $\Psi : \P_{\E(\eta_U)} \to \M $ with  $\E(\eta_U)$ a rigid bridge from $\C$ to $\bf{1}$, such that the `boundary conditions' are satisfied: \\
$\Psi \circ \P_{\C \hookrightarrow \E(\eta_U) } =F$   and $\Psi \circ \P_{\bf{1} \hookrightarrow \E(\eta_U)} = \eta_U$.\\
\item For $\Psi_U$, $\Psi_V$ respectively in $\M$-Dist$(\eta_U,F)$, $\M$-Dist$(\eta_V,F)$, a morphism  $\alpha :\Psi_U \to \Psi_V$ when it exists, is the object of $\M$-Dist$(\eta_U,\eta_V)=\M(U,V)$ who represents the functor 
$$ \M\tx{-Dist}[\Psi_V \otimes - , \Psi_U] : \M\tx{-Dist}(\eta_U,\eta_V) \to \M\tx{-Dist}(\eta_U,F) $$
\end{enumerate}
\end{df}

\begin{rmk}\ \
\begin{enumerate}
\item We have a relative $F$-Yoneda functor $\Y_F : \C \to \Pcal F$ who sends an object $A$ of $\C$ to a  path-object $\Y_{F,A}: \P_{\E(\C)_A} \to \M$ $\in \M\tx{-Dist}(\eta_{FA},F)$, described as follows.

Recall that here $\E(\C)_A$ is the rigid bridge from $\C$ to $\bf{1}$ obtained by the distributor $h_A: \bf{1} \to \widehat{\C}$. 
To define $\Y_{F,A}$ we need to specify the image of a chain $[1,P \xrightarrow{\gamma} \star]$ which generated the other chains of $\E(\C)_A$ ending at $\star$. 

But for this it suffices to specify only for $P=A$, because the morphisms in $\E(\C)_A$ between $P$ and  $\star$ are generated by $\C(P,A)$ and the morphism between $A$ and $\star$. But in some sense we can think $\star$ `as' a copy of $A$, which means that $A$ has a \emph{`multiplicity'}. 

So to define the path-object $\Y_{F,A}$ we need to remove the discrepancy between the actions of $A$ and $\star$. We do it by sending every chain $[1,A \xrightarrow{\gamma} \star]$ to the identity arrow $\Id_{FA}$. More generally for a chain $[n,s]$ ending at $\star$, we take the image of $[n,s]$ to be the image of $[n',s']$, where $s'$ is the `longest' chain ending at $A$ contained in $s$.\\
\item  When $\C= \ol{X}$ and $F$ is a strict path-object, then $\Y_{F,A}$ is just the classical Yoneda functor, see for example \cite{Str}.  
\end{enumerate}
\end{rmk}
\subsection{Base Change and Reduction}\label{b-change}

\begin{df}
Given two bases of enrichment $(\M_1,\W_1)$, $(\M_2,\W_2)$, a morphism of bases is a homomorphism of bicategories $\L : \M_1 \to  \M_2$ such that $\L(\W_1) \subseteq  \W_2$.\\
Then if  $(\C,F)$ is a point of $(\M_1,\W_1)$ it follows immediately that $(\C,\L \circ F)$  is a point of $(\M_2,\W_2)$.
This operation is called \textbf{base change along $\L$}.
\end{df}

\begin{prop}\label{reduc}
Let $(\M,\W)$ be a base of enrichment. There exists a bicategory  $\W^{-1}\M$ together with a homomorphism $\L_{\W} : \M \to \W^{-1}\M$ such that:
\renewcommand\labelenumi{\alph{enumi})}
\begin{enumerate}
\item $\L_{\W}$ makes $\W$ invertible,
\item any homomorphism $ \Phi : \M \to \B$  which makes $\W$ invertible factor as  $\Phi = \ol{\Phi} \circ \L_{W}$ with $$\ol{\Phi}: \W^{-1}\M \to \B$$ a homomorphism.
\item $\W^{-1}\M$ is unique up to a biequivalence. 
\end{enumerate}
\end{prop}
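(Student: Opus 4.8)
The plan is to build $\W^{-1}\M$ by localizing $\M$ one hom-category at a time and then transporting the bicategorical structure, the key hypothesis being that $\W$ is stable under horizontal composition and contains all $2$-identities. For each pair of objects $(A,B)$ I write $\W_{AB}$ for the $2$-cells of $\W$ lying in the hom-category $\M(A,B)$, and I let $\ell_{AB}\colon \M(A,B)\to \W_{AB}^{-1}\M(A,B)$ be the ordinary Gabriel--Zisman localization (which exists for an arbitrary class of arrows, no calculus of fractions being needed for mere existence). Then I would define $\W^{-1}\M$ to have the same objects as $\M$, to have $\W_{AB}^{-1}\M(A,B)$ as its hom-categories, and I would take $\L_{\W}$ to be the identity on objects and $\ell_{AB}$ on each hom-category. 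Since a $1$-cell of $\W^{-1}\M$ is precisely an object of some $\W_{AB}^{-1}\M(A,B)$, and $\ell_{AB}$ is the identity on objects, the $1$-cells of $\W^{-1}\M$ coincide with those of $\M$; only the $2$-cells are altered.

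The first real step is to descend horizontal composition. The composite $\M(B,C)\times\M(A,B)\xrightarrow{c_{ABC}}\M(A,C)\xrightarrow{\ell_{AC}}\W_{AC}^{-1}\M(A,C)$ sends a pair $(\beta,\Id_f)$ with $\beta\in\W_{BC}$ to the horizontal composite $\beta\otimes\Id_f$, which lies in $\W$ because $\W$ is stable under horizontal composition and contains the $2$-identities; hence its image is invertible. The same holds for $(\Id_g,\alpha)$ with $\alpha\in\W_{AB}$. Invoking the standard fact that $\ell_{BC}\times\ell_{AB}$ realizes $\W_{BC}^{-1}\M(B,C)\times\W_{AB}^{-1}\M(A,B)$ as the localization of the product at the arrows generated by $(\W_{BC},\Id)$ and $(\Id,\W_{AB})$, the composite above factors uniquely through a functor $\W_{BC}^{-1}\M(B,C)\times\W_{AB}^{-1}\M(A,B)\to\W_{AC}^{-1}\M(A,C)$, which I take as the composition of $\W^{-1}\M$; the identity $1$-cells are the images of those of $\M$.

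For the associativity and unit constraints I would push forward the corresponding invertible $2$-cells of $\M$ along the functors $\ell$. Because the $1$-cells are unchanged, the family $\ell_{AD}(a_{h,g,f})$ is indexed exactly as in $\M$ and supplies the components of the new associator; naturality against formal inverses of $\W$-cells is automatic, since every constraint $2$-cell is invertible and naturality already holds against $\W$-cells in $\M$. The pentagon and triangle identities in $\W^{-1}\M$ are then the images under the functors $\ell$ of the ones holding in $\M$, hence commute. This yields the bicategory $\W^{-1}\M$ together with the homomorphism $\L_{\W}$, and property (a) is immediate: a $2$-cell $\alpha\in\W_{AB}$ is inverted by $\ell_{AB}$ by the defining property of the Gabriel--Zisman localization.

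For (b), a homomorphism $\Phi\colon\M\to\B$ inverting $\W$ is a family of functors $\Phi_{AB}\colon\M(A,B)\to\B(\Phi A,\Phi B)$ carrying $\W_{AB}$ to isomorphisms, together with invertible structure $2$-cells. The one-dimensional universal property of each $\ell_{AB}$ yields a unique $\bar\Phi_{AB}$ with $\bar\Phi_{AB}\circ\ell_{AB}=\Phi_{AB}$; since the $1$-cells are unchanged I can reuse the structure $2$-cells $\phi_{g,f}$ and $\phi_A$ of $\Phi$ verbatim, their naturality extending to the inverted cells exactly as for the associator. This assembles a homomorphism $\bar\Phi\colon\W^{-1}\M\to\B$ with $\Phi=\bar\Phi\circ\L_{\W}$ strictly, unique by the uniqueness of each $\bar\Phi_{AB}$. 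Finally (c) is formal: if $(\M',\L')$ also satisfies (a)--(b), applying (b) in both directions produces homomorphisms which, again by the uniqueness clause, are mutually inverse up to invertible transformation, so $\W^{-1}\M$ and $\M'$ are biequivalent. The main obstacle I anticipate is the bookkeeping around the product localization in the composition step: checking that horizontal stability inverts precisely the arrows required, and that the transported constraints stay natural and coherent once $2$-cells have been inverted in each hom-category separately; the remainder is a routine transport of structure.
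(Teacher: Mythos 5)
Your proposal is correct and takes essentially the same route as the paper: hom-wise Gabriel--Zisman localization with unchanged objects and $1$-cells, descent of horizontal composition through the fact that a product of localizations is a localization of the product at $\W_{BC}\times\W_{AB}$ (which is precisely the lemma the paper proves in its appendix via cartesian closedness of Cat, and is the ``standard fact'' you invoke), and transport of the associator, unitors, and coherence axioms through the isomorphism $\Hom(\C[\S^{-1}],\E)\cong\Hom_{\S}(\C,\E)$, your observation that naturality against formal inverses is automatic being exactly the mechanism the paper uses when it pulls back $a(U,V,W,Z)$ along that isomorphism. The ``bookkeeping'' you flag in the composition step corresponds to the paper's cubical-diagram verification that the factored composites $\sigma_i$ agree with the composites of the descended functors, so nothing essential is missing.
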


\begin{proof}
See Appendix \ref{loc-bicat}.
\end{proof}

\begin{df}
Let $(\M,\W)$ be a base of enrichment and $\L_{\W} : \M \to \W^{-1}\M$ a localization. For any Segal point $(\C,F)$ of  $(\M,\W)$ the couple 
$(\C, \L_{\W}\circ F)$ is called  \emph{a reduction} of $(\C,F)$. It's a strict Segal point of $\W^{-1}\M$.
\end{df}

\appendix
\section{Review of the notion of bicategory}
\subsection{Definitions}
\begin{df}
A small bicategory $\C$ is determined by the following data:
\begin{itemize}
\item a nonempty set of objects $\underline{\C} = Ob(\C)$
\item a category $\C(A,B)$ of arrows for each pair  $(A,B)$ of objects of $\C$
\item a composition functor $c(A,B,C) :\C(B,C) \times \C(A,B) \longrightarrow \C(A,C)$ for each triple $(A,B,C)$ of objects of $\C$ 
\item an identity arrow $I_{A}: 1 \longrightarrow  \C(A,A)$  for any object $A$ of $\C$ 
\item for each quadruple $(A,B,C,D)$ of objects of $\C$ a natural isomorphism $a(A,B,C,D)$, called \textbf{associativity isomorphism}, between the two composite functors bounding the diagram :

\[
\xy
(-15,25)*+{\C(C,D) \times \C(B,C) \times \C(A,B)}="A";
(-15,0)*+{ \C(B,D) \times \C(A,B)}="B";
(45,25)*+{\C(C,D) \times \C(A,C) }="C";
(45,0)*{\C(A,D)}="D";
{\ar@{->}^{~~~~\Id \times c(A,B,C) }"A"; "C"};
{\ar@{->}_{c(B,C,D) \times \Id }"A"; "B"};
{\ar@{->}^{c(A,C,D)}"C"; "D"};
{\ar@{->}_{~~~~c(A,B,D)}"B"; "D"};
{\ar@/_1.5pc/"A"; "D"};
{\ar@/^1.5pc/"A"; "D"};
{\ar@{=>}_{~~a(A,B,C,D)}(10,10);(21,14)};
\endxy
\]

Explicitely :\
\[
a(A,B,C,D):c(A,B,D) \circ (c(B,C,D) \times \Id)  \to c(A,C,D) \circ (\Id \times c(A,B,C) ) 
\]

\

Then if $(h,g,f)$ is an object of $\C(C,D) \times \C(B,C) \times \C(A,B)$, the isomorphism, component of $a(A,B,C,D)$ at $(h,g,f)$ will be abbreviated into $a(h,g,f)$ or even $a$ :
\[
a=a(h,g,f)=a(A,B,C,D)(h,g,f):
\xy
(0,0)*+{(h \otimes g)  \otimes f}="A";(30,0)*+{h \otimes (g \otimes f)}="B";
{\ar@{->}^{\sim}"A" ;"B"};
\endxy
\]

\item for each pair $(A,B)$ of objects of $\C$, two natural isomorphisms $l(A,B)$ and $r(A,B)$ called \textbf{left} and \textbf{right} identities, between the functors bounding the diagrams:

\[
\xy
(-40,20)*+{ 1 \times \C(A,B) }="A";
(-5,0)*+{\C(A,B)}="B";
(-40,0)*+{\C(B,B) \times \C(A,B) }="C";
{\ar@{->}^{\sim}"A"; "B"};
{\ar@{->}_{ I_{B} \times \Id}"A"; "C"};
{\ar@{->}_{~~~~~~c(A,B,B)}"C"; "B"};
{\ar@{=>}_{l(A,B)}(-28,4);(-23,9)};
(30,20)*+{\C(A,B) \times 1}="X";
(65,0)*+{\C(A,B)}="Y";
(30,0)*+{  \C(A,B) \times \C(A,A) }="Z";
{\ar@{->}^{\sim}"X"; "Y"};
{\ar@{->}_{ \Id \times I_{A} }"X"; "Z"};
{\ar@{->}_{~~~~~~c(A,A,B)}"Z"; "Y"};
{\ar@{=>}_{r(A,B)}(42,4);(47,9)};
\endxy
\]

If $f$ is an object of $\C(A,B)$, the isomorphism, component of $l(A,B)$ at $f$ 
\[ l(A,B)(f):
\xy
(0,0)*+{I_{B} \otimes f}="A";
(20,0)*+{f}="B";
{\ar@{->}^{\sim}"A" ;"B"};
\endxy
\]
is abbreviated into $l(f)$ or even $l$, and similary we write 
\[
r= r(f)=r(A,B)(f):
\xy
(0,0)*+{f \otimes I_{A}}="A";
(20,0)*+{f}="B";
{\ar@{->}^{\sim}"A" ;"B"};
\endxy
\]

\end{itemize}
The natural isomrphisms $a(A,B,C,D)$, $l(A,B)$ and $r(A,B)$ are furthermore required to satisfy the following axioms : 

\begin{itemize}
\item[(A. C.):] Associativity coherence :\
If $(k,h,g,f)$ is an object of  $~\C(D,E) \times \C(C,D) \times \C(B,C) \times \C(A,B)$ the following 
diagram commutes :
\[
\xy
(-20,20)*+{((k\otimes h) \otimes g) \otimes f}="A";
(30,20)*+{(k \otimes (h \otimes g)) \otimes f}="B";
(-20,0)*+{(k \otimes h) \otimes (g \otimes f)}="C";
(30,0)*+{k \otimes ((h \otimes g) \otimes f)}="D";
(5,-15)*+{k \otimes (h \otimes (g\otimes f))}="E";
{\ar@{->}^{a(k,h,g) \otimes \Id}"A";"B"};
{\ar@{->}_{a(k \otimes h,g,f)}"A";"C"};
{\ar@{->}^{a(k,h \otimes g,f)}"B";"D"};
{\ar@{->}_{a(k,h,g \otimes f) }"C";"E"};
{\ar@{->}^{\Id \otimes a(h,g,f)}"D";"E"};
\endxy
\]

\item[(I. C.):] Identity coherence :\
If $(g,f)$ is an object of $\C(B,C) \times \C(A,B)$, the following diagram commutes :
\[
\xy
(-20,0)*+{(g \otimes I_{B}) \otimes f}="C";
(30,0)*+{ g \otimes (I_{B} \otimes f)}="D";
(5,-15)*+{g\otimes f}="E";
{\ar@{->}^{a(g,I_{B},f)}"C";"D"};
{\ar@{->}_{r(g) \otimes \Id }"C";"E"};
{\ar@{->}^{\Id \otimes l(f)}"D";"E"};
\endxy
\]

\end{itemize}

\end{df}
\begin{var}
When all the natural isomorphisms $a,l,r$ are \emph{identities} then $\C$ is said to be a \emph{strict} $2$-\emph{category}
\end{var}
Classically objects of $\C$ are called $0$-\emph{cells}, those of each $\C(A,B)$ are called $1$-$cells$ or $1$-\emph{morphisms} and arrows between $1$-\emph{morphisms} are called $2$-\emph{cells} or $2$-\emph{morphisms}. 
\begin{itemize}
\item In each $\C(A,B)$ :
\begin{itemize}
\item[$\ast$] every $1$-\emph{cell} $f$ has an identity $2$-\emph{cell} :
\[
\xy
(0,0)*+{A}="A"; (20,0)*+{B}="B";
{\ar@/^1pc/^{f}"A";"B"};
{\ar@/_1pc/_{f}"A";"B"};
{\ar@{=>}^{1_{f}}(10,3);(10,-3)};
\endxy
\]
\item[$\ast$]we have a \emph{vertical} composition of $2$-\emph{cells}: `$- \star -$'
\[
\xy
(0,0)*+{\xy
(0,0)*+{A}="A";
(15,0)*+{B}="B";
{\ar@/^1.9pc/^{~~~f}"A";"B"};
{\ar@/_1.9pc/_{~~~h}"A";"B"};
{\ar@{->}^{}"A";"B"};
{\ar@{=>}_{\alpha}(7.5,7);(7.5,1)};
{\ar@{=>}_{\beta}(7.5,-1);(7.5,-7)};
\endxy}="X";
(30,0)*+{\xy
(0,0)*+{A}="A"; (20,0)*+{B}="B";
{\ar@/^1.4pc/^{~~~f}"A";"B"};
{\ar@/_1.4pc/_{~~~h}"A";"B"};
{\ar@{=>}_{\beta \star \alpha}(10,4);(10,-4)};
\endxy}="Y";
{\ar@{~>}"X";"Y"};
\endxy
\]
\end{itemize}
\item In the \emph{composition functor} we have:
\begin{itemize}
\item[$\ast$]  a \emph{classical} composition of $1$-\emph{cells}: `$- \otimes -$'
\[
\xy
(-5,0)*+{\xy
(0,0)*+{C}="C";
(15,0)*+{B}="B";
(30,0)*+{A}="A";
{\ar@{->}^{f}"A";"B"};
{\ar@{->}^{g}"B";"C"};
\endxy}="X";
(30,0)*+{\xy
(0,0)*+{C}="C";
(15,0)*+{A}="A";
{\ar@{->}^{g \otimes f}"A";"C"};
\endxy}="Y";
{\ar@{~>}"X";"Y"}:
\endxy
\]
\item[$\ast$] a \emph{horizontal} composition of $2$-\emph{cells}: `$- \otimes -$'
\[
\xy
(-10,0)*+{\xy
(0,0)*+{C}="C";
(15,0)*+{B}="B";
(30,0)*+{A}="A";
{\ar@/_1.3pc/_{f}"A";"B"};
{\ar@/^1.3pc/^{f'}"A";"B"};
{\ar@/_1.3pc/_{g}"B";"C"};
{\ar@/^1.3pc/^{g'}"B";"C"};
{\ar@{=>}_{\beta}(7.5,4);(7.5,-4)};
{\ar@{=>}_{\alpha}(22.5,4);(22.5,-4)};
\endxy}="X";
(30,0)*+{\xy
(0,0)*+{C}="C";
(20,0)*+{A}="A";
{\ar@/_1.3pc/_{g \otimes f}"A";"C"};
{\ar@/^1.3pc/^{g' \otimes f'}"A";"C"};
{\ar@{=>}_{\beta \otimes \alpha}(10,4);(10,-4)};
\endxy}="Y";
{\ar@{~>}"X";"Y"}:
\endxy 
\]
\[
(\beta \otimes \alpha)(g \otimes f) =\beta(g) \otimes \alpha (f)= g' \otimes f'
\]
\end{itemize}
\end{itemize}

\begin{ex}{[Bénabou]}\label{mon} 
Let $(\M, \otimes,I,\alpha,\lambda,\rho)$ be a \emph{monoidal} category. We define a bicategory $\widetilde{\M}$ by:
\begin{itemize}
\item[-] $Ob(\widetilde{\M})= \{ \bigstar \}$
\item[-] $\widetilde{\M}(\bigstar,\bigstar)= \M$
\item[-] $c(\bigstar,\bigstar,\bigstar)= \otimes$
\item[-] $I_{\bigstar}=I $
\item[-] $a(\bigstar,\bigstar,\bigstar,\bigstar)= \alpha$
\item[-] $l(\bigstar,\bigstar)= \lambda$
\item[-] $r(\bigstar,\bigstar)= \rho$
\end{itemize}
We easily check that the isomorphisms $a,l,r$ satisfy the (A.C.) and (I.C.) axioms since $\alpha,\lambda,\rho$ satisfy the \emph{associativity} and \emph{identities} axioms of a monoidal category. Conversely every bicategory with one object ``is'' a monoidal category.
\end{ex}
More generally we have:
\begin{prop}
Let $\C$ be a bicategory and $A$ an object of $\C$, then $ \otimes = c(A,A,A)$, $I=I_{A}$, $ \alpha=a(A,A,A,A)$, $\lambda=l(A,A)$, $\rho=r(A,A)$ determine a \emph{monoidal structure} on the category $\C(A,A)$.
\end{prop}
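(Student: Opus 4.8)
The plan is to verify directly that the tuple $(\C(A,A), \otimes, I, \alpha, \lambda, \rho)$ satisfies the defining axioms of a monoidal category, reading each axiom off the corresponding axiom of the bicategory $\C$ specialized to the single object $A$. First I would note that $\otimes = c(A,A,A)$ is, by the definition of a bicategory, a functor $\C(A,A) \times \C(A,A) \to \C(A,A)$, so the tensor is a genuine bifunctor and $I = I_A$ is a distinguished object; this is exactly what the bicategory data provides once all source and target objects are taken equal to $A$.

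Next I would check that $\alpha, \lambda, \rho$ are natural isomorphisms. Since $a(A,B,C,D)$, $l(A,B)$ and $r(A,B)$ are by hypothesis natural isomorphisms between the composite functors bounding the relevant diagrams, setting $A=B=C=D$ yields natural isomorphisms $\alpha \colon (- \otimes -) \otimes - \Rightarrow - \otimes (- \otimes -)$ on $\C(A,A)^{\times 3}$, together with $\lambda \colon I \otimes - \Rightarrow \Id$ and $\rho \colon - \otimes I \Rightarrow \Id$ on $\C(A,A)$. Invertibility and naturality are inherited verbatim from the ambient bicategory.

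The substance of the argument is the two coherence axioms. For the pentagon, I would take $(k,h,g,f)$ to be a quadruple of $1$-cells in $\C(A,A)$ and invoke the associativity coherence (A.C.) with $A=B=C=D=E$; the commuting pentagon it asserts is precisely Mac Lane's pentagon for $\alpha$. For the triangle identity, I would take $(g,f)$ a pair of objects of $\C(A,A)$ and invoke the identity coherence (I.C.) with $A=B=C$; the commuting triangle it asserts is exactly the triangle axiom relating $\alpha$, $\rho$ and $\lambda$.

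I do not anticipate any genuine obstacle: the definition of a monoidal category is, term for term, the definition of a bicategory with a single object, so the whole proof reduces to specializing every structure map and every axiom to the endomorphism category $\C(A,A)$. The only point demanding care is bookkeeping — confirming that the indices in the general bicategory axioms collapse to exactly the instances needed, and that the natural isomorphisms restrict without loss to the hom-category $\C(A,A)$. Once these identifications are made the verification is immediate; indeed this is precisely the converse direction already announced in Example \ref{mon}.
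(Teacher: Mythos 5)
Your proof is correct and coincides with the paper's treatment: the paper states this proposition with no separate proof, regarding it as the term-by-term identification between one-object bicategory data and monoidal data already sketched in Example~\ref{mon}, which is exactly the axiom-by-axiom specialization you carry out. Your explicit check that the composition functor $c(A,A,A)$ gives the bifunctor, that $a(A,A,A,A)$, $l(A,A)$, $r(A,A)$ restrict to natural isomorphisms, and that (A.C.) and (I.C.) collapse to Mac Lane's pentagon and triangle axioms is the entire content, and it is carried out correctly.
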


\subsection{Morphisms of bicategories}
\begin{df}{\emph{[Lax morphism]}}
Let $\B=(\underline{\B},c,,I,a,l,r)$ and $\C=(\underline{\C},c',I',a',l',r')$ be two small bicategories. A lax morphism $F=(F,\varphi)$ from $\B$ to $\C$ is determined by the following:
\begin{itemize}
\item A map $F : \underline{\B} \longrightarrow \underline{\C}$, $A \leadsto FA$ 
\item A family functors
$$F_{AB}= F(A,B): \B(A,B) \to \C(FA,FB),$$
$$ f \leadsto Ff , \alpha \leadsto F\alpha$$
\item For each object $A$ of $\B$ an arrow of $\C(FA,FA)$ (i.e a $2$-\emph{cell} of $\C$) :
$$ \varphi_{A} : I'_{FA} \to F(I_{A})$$
\item A family of natural transformations :
$$\varphi(A,B,C) : c'(FA,FB,FC) \circ(F_{BC} \times F_{AB}) \to F_{AC} \circ c(A,B,C)$$
\[
\xy
(-15,25)*+{\B(B,C) \times \B(A,B)}="A";
(-15,0)*+{ \C(FB,FC) \times \C(FA,FB)}="B";
(45,25)*+{\B(A,C)}="C";
(45,0)*{\C(FA,FC)}="D";
{\ar@{->}^{~~~~ c(A,B,C) }"A"; "C"};
{\ar@{->}_{ F_{BC} \times F_{AB} }"A"; "B"};
{\ar@{->}^{F_{AC}}"C"; "D"};
{\ar@{->}_{~~~~c'(FA,FB,FC)}"B"; "D"};
{\ar@/_1.5pc/"A"; "D"};
{\ar@/^1.5pc/"A"; "D"};
{\ar@{=>}_{~~\varphi(A,B,C)}(10,10);(21,14)};
\endxy
\]
If $(g,f)$ is an object of $\B(B,C) \times \B(A,B)$, the $(g,f)$-\emph{component} of $\varphi(A,B,C)$ 

$$Fg \otimes Ff \xrightarrow{\varphi(A,B,C)(g,f)} F(g \otimes f)$$
shall be usually abbreviated to $\varphi_{gf}$ or even $\varphi$.

\emph{These data are required to satisfy the following coherence axioms:}
\item[(M.1):] If $(h,g,f)$ is an object of $\B(C,D) \times \B(B,C) \times \B(A,B)$ the following diagram, where indices $A,B,C,D$ have been omitted, is commutative:
\[
\xy
(-55,20)*+{(Fh\otimes Fg) \otimes Ff }="A";
(-10,20)*+{(F(h \otimes g))\otimes Ff}="M";
(30,20)*+{F((h \otimes g) \otimes f)}="B";
(-55,0)*+{Fh\otimes (Fg \otimes Ff)}="C";
(30,0)*+{F(h\otimes (g \otimes f))}="D";
(-10,0)*+{Fh\otimes (F(g \otimes f))}="N";
{\ar@{->}^{\varphi_{hg} \otimes \Id}"A";"M"};
{\ar@{->}^{\varphi_{(h\otimes g)f}}"M";"B"};
{\ar@{->}_{a'(Fh,Fg,Ff)}"A";"C"};
{\ar@{->}^{Fa(h,g,f)}"B";"D"};
{\ar@{->}^{\Id \otimes \varphi_{gf}}"C";"N"};
{\ar@{->}^{\varphi_{h(g\otimes f)}}"N";"D"};
\endxy
\]
\item[(M.2):] If $f$ is an object of $\B(A,B)$ the following diagrams commute:
\[
\xy
(10,20)*+{Ff \otimes I'_{FA}}="A";
(35,20)*+{Ff\otimes FI_{A}}="M";
(60,20)*+{F(f \otimes I_{A})}="B";
(10,0)*+{Ff}="C";
(60,0)*+{Ff}="D";
{\ar@{->}_{~\Id \otimes \varphi_{A}  }"A";"M"};
{\ar@{->}_{~~\varphi_{fI_{A}}}"M";"B"};
{\ar@{->}_{r'(Ff)}"A";"C"};
{\ar@{->}^{Fr(f)}"B";"D"};
{\ar@{=}^{}"C";"D"};
\endxy
\xy
(10,20)*+{I'_{FB} \otimes Ff  }="W";
(35,20)*+{FI_{B}\otimes Ff }="T";
(60,20)*+{F(I_{B} \otimes f )}="X";
(10,0)*+{Ff}="Y";
(60,0)*+{Ff}="Z";
{\ar@{->}_{\varphi_{B} \otimes \Id   }"W";"T"};
{\ar@{->}_{~~\varphi_{I_{B}f}}"T";"X"};
{\ar@{->}^{l'(Ff)}"W";"Y"};
{\ar@{->}^{Fl(f)}"X";"Z"};
{\ar@{=}^{}"Y";"Z"};

\endxy
\]

\end{itemize}

\end{df}

\begin{var}\ \\
\begin{enumerate}
\item[$\bullet$] We will say that $F=(F,\varphi)$ is a \textbf{colax morphism} if $\varphi(A,B,C)$ and $\varphi_{A}$ are in the \textbf{opposite sense} i.e 
$$Fg \otimes Ff \xleftarrow{\varphi(A,B,C)(g,f)} F(g \otimes f)$$ $$I'_{FA} \xleftarrow{\varphi_{A}} F(I_{A})$$
and all of the horizontal arrows in the diagrams of \emph{(M.1)} and \emph{(M.2)} are in the opposite sense.
\item[$\bullet$] If $\varphi(A,B,C)$ and $\varphi_{A}$ are \textbf{natural isomorphisms}, so that 
$Fg \otimes Ff \xrightarrow{\sim} F(g \otimes f)$ and $I'_{FA} \xrightarrow{\sim} F(I_{A})$ then $F=(F,\varphi)$ is called a \textbf{homomorphism}.
\item[$\bullet$] If $\varphi(A,B,C)$ and $\varphi_{A}$ are \textbf{identities}, so that $Fg \otimes Ff=F(g \otimes f)$ and $I'_{FA}= F(I_{A})$ then 
$F=(F,\varphi)$ is called a \textbf{strict homomorphism}.
\end{enumerate}

\end{var}
\section{The $2$-Path-category of a small category}

Let $\C$ be a small category. For any pair $(A,B)$ of objects such that $\C(A,B)$ is nomempty, we build \textbf{from the composition operation and its properties} a simplicial diagram as follows :\\
\begin{enumerate}
\item[$\star$] If $A \neq B$:
\[
\xy
(-75,0)*++{\C(A,B)}="X";
(-40,0)*++{\coprod \C(A,A_{1})\times \C(A_{1},B)}="Y";
(16,0)*++{\coprod \C(A,A_{1}) \times \C(A_{1},A_{2})\times \C(A_{2},B)}="Z";
(47,0)*++{\cdots};
{\ar@{->}"Y";"X"};
{\ar@{.>}"Y";"Z"};
{\ar@<-1.0ex>@{.>}"X";"Y"};
{\ar@<1.0ex>@{.>}"X";"Y"};
{\ar@<-1.0ex>"Z";"Y"};
{\ar@<1.0ex>"Z";"Y"};
{\ar@<-1.8ex>@{.>}"Y";"Z"};
{\ar@<1.8ex>@{.>}"Y";"Z"};
\endxy
\]

\item[$\star$] If $A =B$:

\[
\xy
(-65,0)*++{\C(A,A)}="X";
(-30,0)*++{\coprod \C(A,A_{1})\times \C(A_{1},A)}="Y";
(26,0)*++{\coprod \C(A,A_{1}) \times \C(A_{1},A_{2})\times \C(A_{2},A)}="Z";
(-93,0)*+{\{A\} \cong 1}="O";
(57,0)*++{\cdots};
{\ar@{->}"Y";"X"};
{\ar@{.>}"Y";"Z"};
{\ar@{->}^{~~~1_{A}}"O";"X"};
{\ar@<-1.0ex>@{.>}"X";"Y"};
{\ar@<1.0ex>@{.>}"X";"Y"};
{\ar@<-1.0ex>"Z";"Y"};
{\ar@<1.0ex>"Z";"Y"};
{\ar@<-1.8ex>@{.>}"Y";"Z"};
{\ar@<1.8ex>@{.>}"Y";"Z"};
\endxy
\]

\end{enumerate}

\ \\

Here the dotted arrows correspond to add an identity map of an object and  the normal arrows correspond to replace composable pair of arrows by their composite.\\ 

In each case the diagram ``represents''  a functor which is a \textbf{cosimplicial set}: 
\begin{itemize}
\item[$\star$] If $A \neq B$:  $\P_{AB} : \Delta^{+} \to \tx{Set}$,
\item[$\star$] If $A=B$: $\P_{AA} : \Delta \to \tx{Set}$.
\end{itemize}
\vspace*{1cm}
\begin{obs}\ \\
\renewcommand\labelenumi{\alph{enumi})}
\begin{enumerate}
\item Here $\P_{AB}(n)$ is the set of $n$-simplices of the \emph{nerve of $\C$}, with extremal vertices $A$ and $B$ :
$$\P_{AB}(n)= \coprod_{(A=A_{0},...,A_{n}=B)} \C(A_{0},A_{1}) \times \cdots \times \C(A_{n-1},A_{n})$$ in particular we have : 
$\P_{AB}(1)= \C(A,B)$.

\item If $A=B$ and for $n=0$, $\P_{AA}(0)$ has a unique element which is identified with the object $A$.  
\item We will represent an element  $s$ of $\P_{AB}(n)$ as a $n$-tuple 
$$s=(A_{0}  \to A_{1}, \cdots ,A_{i} \to A_{i+1}, \cdots, A_{n-1} \to A_{n})$$
or as an \emph{oriented graph} 
$$s =A_{0}  \to A_{1} \to \cdots \to A_{i} \to A_{i+1} \to \cdots \to A_{n-1} \to A_{n}.$$
\item For a map $u :n \to m$  of $\Delta$,  $\P_{AB}(u) : \P_{AB}(n) \to \P_{AB}(m)$ is a function which sends a $n$-simplex to a $m$-simplex. \\ 
Such function corresponds to :
 \begin{itemize}
 \item (one or many) insertions of identities if $n < m$ 
 \item (one or many) compositions at some vertices if $n > m$. 
 \end{itemize}
\end{enumerate}
\end{obs}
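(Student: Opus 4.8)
The plan is to promote the two bar-type diagrams to honest functors on $\Delta^{+}$ (resp.\ on $\Delta$ when $A=B$) and then to read the four assertions off that functorial description. First I would pin down the value on objects: to the object $n$ I attach
$$\P_{AB}(n)=\coprod_{A=A_{0},\dots,A_{n}=B}\C(A_{0},A_{1})\times\cdots\times\C(A_{n-1},A_{n}),$$
the disjoint union ranging over all strings of intermediate objects of $\C$ with the two prescribed endpoints. By definition of the nerve this is exactly the set of $n$-simplices of $\Nv(\C)$ whose initial and terminal vertices are $A$ and $B$, which is assertion (a); specialising to a single factor gives $\P_{AB}(1)=\C(A,B)$. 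When $A=B$ I additionally allow $n=0$ and set $\P_{AA}(0)=\{A\}\cong 1$, the empty string sitting at the vertex $A$; this is assertion (b), and it is precisely this extra bottom object that lets the $A=B$ diagram extend over the initial object $0$ of $\Delta$ while the $A\neq B$ diagram is only defined on $\Delta^{+}$. Assertion (c) is then just a name: a point of the coproduct is recorded either as the $n$-tuple of its arrows or as the oriented chain $A_{0}\to\cdots\to A_{n}$.

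Next I would specify the structure maps and thereby obtain assertion (d). It suffices to describe the images of the two generating families of maps of $\Delta$ and to observe their effect. The degree-raising generators act by inserting the identity $\Id_{A_{i}}$ at a chosen vertex, lengthening a chain by one; the degree-lowering generators act by replacing a consecutive pair $A_{i-1}\to A_{i}\to A_{i+1}$ by its composite $A_{i-1}\to A_{i+1}$ in $\C$. Because the prescribed endpoints $A_{0}=A$ and $A_{n}=B$ must be preserved, only the interior vertices may be composed away, which is why the outer face operations of the full nerve are absent and the endpoint-fixed version is the correct thing to consider. Consequently a general $u\colon n\to m$ of $\Delta$ factors into these generators and induces identity-insertions when $n<m$ and compositions when $n>m$, exactly as claimed.

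The only point that genuinely needs checking is that these assignments satisfy the cosimplicial identities, so that $\P_{AB}$ is a well-defined functor rather than a mere graded family of maps. I would organise this verification so that each relation is traced back to a single axiom of $\C$: two insertions at distinct vertices, or two compositions at disjoint interior pairs, commute for purely combinatorial reasons; an insertion immediately followed by composing the inserted identity away collapses by the left and right unit laws $\Id\circ f=f=f\circ\Id$; and the only relation coupling two compositions at adjacent interior pairs is the associativity of $\C$. I expect this bookkeeping, together with keeping the endpoint indices $A_{0}=A$, $A_{n}=B$ straight under the renaming of the objects of $\Delta^{+}$, to be the sole obstacle; once the identities are confirmed, the diagrams are the cosimplicial sets $\P_{AB}\colon\Delta^{+}\to\tx{Set}$ and $\P_{AA}\colon\Delta\to\tx{Set}$, and assertions (a)--(d) hold by construction.
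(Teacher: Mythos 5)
Your proposal is correct and takes essentially the same route as the paper: Appendix B builds $\P_{AB}$ exactly this way, defining it on objects as the coproduct over strings of intermediate objects (agreeing with the endpoint-fixed $n$-simplices of the nerve, with the empty chain $\{A\}\cong 1$ supplying the value at $0$ when $A=B$), with degree-raising maps of $\Delta$ inserting identities and degree-lowering maps composing at interior vertices, so that assertions (a)--(d) are read off the construction. Your only addition is to make explicit the verification of the cosimplicial identities (disjoint operations commute, insertion followed by composing the inserted identity collapses by the unit laws, adjacent compositions agree by associativity of $\C$), a bookkeeping step the paper asserts without detail; this is a sound and complete way to fill that implicit gap.
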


\begin{term}
An element of $\P_{AB}(n)$  will be called a \emph{path or chain of length $n$} from $A$  to $B$. When $A=B$ we will call \emph{loops of lentgh $n$} the elements of $\P_{AA}(n)$. In particular there is a unique path of length $0$, which is identified with the object $A$.
\end{term}

We can rewrite the simplicial digrams above as :
\[
\xy
(-20,0)*++{\P_{AB}(1)}="X";
(5,0)*++{\P_{AB}(2)}="Y";
(30,0)*++{\P_{AB}(3)}="Z";
(39,0)*++{\cdots};
{\ar@{->}"Y";"X"};
{\ar@{.>}"Y";"Z"};
{\ar@<-1.0ex>@{.>}"X";"Y"};
{\ar@<1.0ex>@{.>}"X";"Y"};
{\ar@<-1.0ex>"Z";"Y"};
{\ar@<1.0ex>"Z";"Y"};
{\ar@<-1.8ex>@{.>}"Y";"Z"};
{\ar@<1.8ex>@{.>}"Y";"Z"};
\endxy
\]

\[
\xy
(-40,0)*+{\P_{AA}(0)}="O";
(-20,0)*++{\P_{AA}(1)}="X";
(5,0)*++{\P_{AA}(2)}="Y";
(30,0)*++{\P_{AA}(3)}="Z";
(39,0)*++{\cdots};
{\ar@{->}"Y";"X"};
{\ar@{.>}"Y";"Z"};
{\ar@<-1.0ex>@{.>}"X";"Y"};
{\ar@<1.0ex>@{.>}"X";"Y"};
{\ar@<-1.0ex>"Z";"Y"};
{\ar@<1.0ex>"Z";"Y"};
{\ar@<-1.8ex>@{.>}"Y";"Z"};
{\ar@<1.8ex>@{.>}"Y";"Z"};
{\ar@{.>}^{1_{A}}"O";"X"};
\endxy
\]

\begin{df}{\emph{[Concatenation of paths]}}\ \\
\indent Given $s$ in $\P_{AB}(n)$ and $t$ in $\P_{BC}(m)$ 
      $$s =A  \to A_{1} \to \cdots \to A_{i} \to A_{i+1} \to \cdots \to A_{n-1} \to B$$ 
      $$t =B \to B_{1} \to \cdots \to B_{j} \to B_{j+1} \to \cdots \to B_{m-1} \to C$$ 
we define the \textbf{concatenation} of $t$ and $s$ to be the element of $\P_{AC}(n+m)$ :
$$s \ast t := \underbrace{A  \to A_{1} \to \cdots \to A_{n-1} \to }_{s}B\underbrace{ \to B_{1} \to  \cdots \to B_{m-1} \to C}_{t}.$$ 
\end{df}

\begin{obs}\ \\
\indent From the definition it follows immediately that for any $n$ and for any $s \in \P_{AB}(n)$ we have :
\begin{enumerate}
\item[$\bullet$] $s \ast B = s$,
\item[$\bullet$] $A \ast s= s$ .
\end{enumerate}
\end{obs}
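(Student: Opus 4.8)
The plan is to unwind the definition of concatenation in the two degenerate cases where one of the two paths has length $0$, using the identification of the unique length-$0$ path at an object with the object itself. Both identities are then direct specializations of the gluing formula, and in fact they exhibit the length-$0$ paths as the left and right units for concatenation.

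First I would treat $s \ast B$. Here $B$ denotes the unique element of $\P_{BB}(0)$, the length-$0$ loop at $B$ identified with the object $B$; it carries no intermediate vertices. Applying the definition of concatenation with the second factor $t = B$ (so $m = 0$ and the terminal vertex $C$ coincides with $B$), the glued oriented graph
$$ s \ast B = \underbrace{A \to A_1 \to \cdots \to A_{n-1} \to}_{s} B $$
acquires no new vertices beyond those of $s$, since the length-$0$ factor contributes none. Thus $s \ast B$ is literally the same oriented graph as $s$, and the length bookkeeping $n + 0 = n$ confirms that it lands in the correct set $\P_{AB}(n)$.

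Symmetrically, for $A \ast s$ I would take the first factor to be the unique length-$0$ loop $A \in \P_{AA}(0)$ and the second factor to be $s \in \P_{AB}(n)$. Again the length-$0$ factor inserts no vertices, so
$$ A \ast s = A \underbrace{\to A_1 \to \cdots \to A_{n-1} \to B}_{s} = s, $$
which now sits in $\P_{AB}(0 + n) = \P_{AB}(n)$.

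The argument is genuinely immediate, and there is no real obstacle beyond the vertex-counting bookkeeping. The only point that deserves a word of care is the identification of the length-$0$ path at an object $X$ with $X$ itself, so that $\P_{XX}(0)$ is a singleton and concatenation against it inserts nothing. Once this identification is in force, both identities drop out of the concatenation formula, and they are exactly the statement that the length-$0$ paths serve as the identity $1$-cells of $\P_{\C}$.
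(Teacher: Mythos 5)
Your proposal is correct and matches the paper exactly: the paper offers no separate argument beyond noting the identities follow immediately from the definition of concatenation, and your unwinding --- identifying the unique length-$0$ path at an object with that object, observing it contributes no vertices, and checking the length bookkeeping $n+0=n$ and $0+n=n$ --- is precisely that intended verification. Your closing remark that these identities make $[0,A]$ the strict identity $1$-cell is also consistent with the paper's definition of $\P_{\C}$ as a strict $2$-category.
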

\vspace*{1cm}
\paragraph*{\textbf{The Grothendieck construction}}\ \\
\indent In the following we're going to apply the Grothendieck construction to the functors $\P_{AB}$, $\P_{AA}$. \\

For any pair of objects $(A,B)$ we denote by  $\P_{\C}(A,B)$ the \emph{category of elements} or the \emph{Grothendieck integral} of the functor $\P_{AB}$ described as follows.\\
\begin{itemize}
\item The objects of $\P_{\C}(A,B)$ are pairs $[n,s]$, where $n$ is an object of $\Delta$ and $s \in \P_{AB}(n)$.
\item A morphism $[n,s] \xrightarrow{u} [m,t]$ in $ \P_{\C}(A,B)$ is a map $ u: n\longrightarrow m $ of $\Delta$ such that image of $u$ by $\P_{AB}$  sends $s$ to $t$ :
$$\P_{AB}(u) : \P_{AB}(n) \longrightarrow \P_{AB}(m) $$ 
and $$\P_{AB}(u)s=t.$$
\end{itemize}
\begin{obs}\ \\
\indent We have a forgetful functor $\le_{AB}$ which makes each $\P_{\C}(A,B)$ a category over $\Delta$ (or $\Delta^{+}$) :
$$\le_{AB}: \P_{\C}(A,B) \to \Delta$$
with $\le_{AB}([n,s])= n $ and  $\le_{AB}([n,s] \xrightarrow{u} [m,t]) = u$.\\
The functor $\le_{AB}$ will be called \textbf{length}.
\end{obs}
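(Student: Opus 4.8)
The statement to verify is that the length assignment $\le_{AB}\colon \P_{\C}(A,B)\to\Delta$, sending $[n,s]\mapsto n$ and a morphism $[n,s]\xrightarrow{u}[m,t]$ to the underlying map $u$, is genuinely a functor. The plan is to recognise $\le_{AB}$ as the canonical projection from the category of elements $\P_{\C}(A,B)=\int\P_{AB}$ onto its indexing category $\Delta$ (or $\Delta^{+}$ when $A\neq B$), and then to check the two functoriality axioms directly from the definition of morphisms in $\P_{\C}(A,B)$, leaning only on the fact, already established in the construction of $\P_{\C}$, that $\P_{AB}$ is itself a functor (a cosimplicial set).

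First I would record well-definedness on arrows. By the very definition of $\P_{\C}(A,B)$ as a Grothendieck integral, a morphism $[n,s]\xrightarrow{u}[m,t]$ \emph{is} a map $u\colon n\to m$ of $\Delta$ satisfying $\P_{AB}(u)s=t$. Hence setting $\le_{AB}(u)=u$ produces an arrow of $\Delta$ with the expected source $\le_{AB}([n,s])=n$ and target $\le_{AB}([m,t])=m$, so the assignment lands in $\Delta$ and respects source and target.

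Next I would check preservation of identities and composites. The identity of $[n,s]$ is carried by $\Id_n\colon n\to n$, which is a legitimate morphism since functoriality of $\P_{AB}$ gives $\P_{AB}(\Id_n)=\Id_{\P_{AB}(n)}$, whence $\P_{AB}(\Id_n)s=s$; thus $\le_{AB}(\Id_{[n,s]})=\Id_n=\Id_{\le_{AB}([n,s])}$. For composition, given composable arrows $[n,s]\xrightarrow{u}[m,t]\xrightarrow{v}[p,r]$, their composite in $\P_{\C}(A,B)$ is carried by $v\circ u$, and this remains a morphism because $\P_{AB}(v\circ u)s=\P_{AB}(v)(\P_{AB}(u)s)=\P_{AB}(v)t=r$, again by functoriality of $\P_{AB}$. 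Therefore $\le_{AB}(v\circ u)=v\circ u=\le_{AB}(v)\circ\le_{AB}(u)$, which completes the verification.

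There is essentially no obstacle here: the content is entirely formal, being subsumed by the general principle that for any functor $P\colon\mathcal{D}\to\tx{Set}$ the category of elements $\int P$ is equipped with a projection to $\mathcal{D}$. The one point worth flagging explicitly is that the composition law on morphisms of $\P_{\C}(A,B)$ is \emph{defined} by composing the underlying $\Delta$-maps; once this is granted, strict (on-the-nose) compatibility with $\le_{AB}$ is immediate, and no coherence data is needed. The identical argument applies verbatim with $\Delta^{+}$ replacing $\Delta$ in the case $A\neq B$, and the additional monoidal structure of $\le_{AA}$ asserted earlier follows since concatenation of chains of lengths $n$ and $m$ has length $n+m$, matching the ordinal addition in $\Delta$.
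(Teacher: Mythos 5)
Your proof is correct and follows exactly the paper's (implicit) reasoning: the paper treats $\le_{AB}$ as the canonical projection supplied automatically by the Grothendieck construction of $\P_{\C}(A,B)$ as a category of elements, and your explicit verification of identities and composites via the functoriality of $\P_{AB}$ is precisely the routine check the paper leaves to the reader. The closing remark relating concatenation to ordinal addition likewise matches the paper's earlier assertion about $\le_{AA}$ being monoidal.
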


\begin{rmk}
\begin{itemize}
\item[$\lozenge$] The concatenation of paths is a functor. For each  triple $(A,B,C)$ of objects of $\C$ we denote by $c(A,B,C)$ that functor: 
\[
\xy
(-10,15)*+{c(A,B,C): \P_{\C}(B,C) \times \P_{\C}(A,B)}="X";
(40,15)*+{\P_{\C}(A,C)}="Y";
(-1,0)*+{\left[\xy
(-10,0)*+{\left(\vcenter{ \xy
(0,13)*+{[n',s']}="A";
(0,0)*+{[m',t']}="B";
{\ar@{=>}^{u'}"A";"B"};
\endxy } \right),}="M";
(8,0)*+{\left(\vcenter{ \xy
(0,13)*+{[n,s]}="A";
(0,0)*+{[m,t]}="B";
{\ar@{=>}^{u}"A";"B"};
\endxy } \right)}="N";\endxy \right]}="E";
(46,0)*+{\left(\vcenter{ \xy
(0,13)*+{[n+n',s \ast s']}="A";
(0,0)*+{[m+m',t \ast t']}="B";
{\ar@{=>}^{u+u'}"A";"B"};
\endxy } \right)}="W";
{\ar@{->}"X";"Y"};
{\ar@{|->}"E";"W"};
\endxy
\]
\item[$\lozenge$] It's easy to check that the concatenation is strictly assosiactif.
\end{itemize}
\end{rmk}

\begin{nota}
We will use the following notations:

$s' \otimes s := c(A,B,C)(s',s) =  s \ast s' $,\\

$t' \otimes t := c(A,B,C)(t',t) = t \ast t' $ and \\

$u' \otimes u := c(A,B,C)(u',u) =  u+u' $\\

\end{nota}

Now we've set up all the tools needed  for the definition of the $2$-path-category.

\begin{df}{\emph{[$2$-path-category]}}
Let $\C$ be  a small category. The \emph{$2$-path-category} $\P_{\C}$ of $\C$ is the bicategory given by the following data:

\begin{itemize}
\item the objects of $\P_{\C}$ are the objects of $\C$
\item for each pair  $(A,B)$ of objects of $\P_{\C}$, the category of arrows of  $\P_{\C}$ is the category $\P_{\C}(A,B)$ described above
\item for each triple $(A,B,C)$ the composition functor is given by \emph{the concatenation functor}  described in the remark above:
$$c(A,B,C) :\P_{\C}(B,C) \times \P_{\C}(A,B) \longrightarrow \P_{\C}(A,C)$$
\item for any object $A$ of $\C$ we have a \underline{\emph{strict} identity} arrow $I_{A}: 1 \longrightarrow  \P_{C}(A,A)$ which is $[0,A]$ 
\item for each quadruple $(A,B,C,D)$ of objects of $\C$ the associativity natural isomorphism $a(A,B,C,D)$ is \underline{the identity}
\item the left and right identities natural isomorphisms are \underline{the identity} for each pair $(A,B)$ of objects of $\C$
\end{itemize}
These data satisfy clearly the \emph{Associativity and Identity Coherence axioms} (A. C.)  and (I. C.) so that $\P_{\C}$ is even a \underline{strict $2$-category}.

\end{df}

\begin{obs}\ \\
\indent Let $\C$ and $\D$ be two small categories  and $F$ a functor $F :\C  \to \D$. By definition $F$ commutes with the compositions of $\C$ and $D$, sends composable arrows of $\C$ to composable arrows of $\D$ and sends identities to identities. We can then easily see that  $F$ induces a strict homomorphism $\P_{F} :\P_{\C} \to \P_{\D}$. That is we have a functor:
\[
\xy
(0,8)*+{\P_{[-]}: \tx{Cat}_{\leq 1}}="X";
(30,8)*+{\tx{Bicat}}="Y";
(3,0)*+{\C \xrightarrow{F} \D}="E";
(35,0)*++{\P_{\C} \xrightarrow{\P_{F}} \P_{\D}}="W";
{\ar@{->}"X";"Y"};
{\ar@{|->}"E";"W"};
\endxy
\]

where $\tx{Cat}_{\leq 1}$ and Bicat are respectively the $1$-category of small categories and the category of bicategories.
\end{obs}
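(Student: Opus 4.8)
The plan is to read $\P_F$ off directly from the construction of $\P_{[-]}$ recalled in this appendix and then verify strictness and functoriality by reducing every assertion to the two defining properties of a functor, preservation of composition and of identities. First I would set $\P_F(A):=FA$ on objects, and for each pair $(A,B)$ produce the component functor $(\P_F)_{AB}:\P_\C(A,B)\to\P_\D(FA,FB)$. Recall that $\P_\C(A,B)$ is the category of elements of the cosimplicial set $\P_{AB}$, whose value at $n$ is $\coprod_{(A=A_0,\dots,A_n=B)}\C(A_0,A_1)\times\cdots\times\C(A_{n-1},A_n)$; applying $F$ arrow-by-arrow sends a chain $A_0\xrightarrow{f_1}\cdots\xrightarrow{f_n}A_n$ to $FA_0\xrightarrow{Ff_1}\cdots\xrightarrow{Ff_n}FA_n$, giving a map $\P_{AB}(n)\to\P_{FA,FB}(n)$ for every $n$.

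The central step is to check that these maps are natural in $n$, i.e. that they assemble into a morphism of cosimplicial sets; it suffices to test this on the generating cofaces and codegeneracies. A coface replaces a consecutive pair $A_{i-1}\to A_i\to A_{i+1}$ by its composite, and $F(f_{i+1}\circ f_i)=Ff_{i+1}\circ Ff_i$ precisely because $F$ preserves composition; a codegeneracy inserts an identity $\Id_{A_i}$, and $F(\Id_{A_i})=\Id_{FA_i}$ because $F$ preserves identities. Hence each naturality square commutes, and by functoriality of the Grothendieck construction this natural transformation induces the functor $(\P_F)_{AB}$, concretely $[n,s]\mapsto[n,Fs]$ and $(u:[n,s]\to[m,t])\mapsto(u:[n,Fs]\to[m,Ft])$. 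Since $F$ leaves the parametrizing map $u$ of $\Delta$ untouched, $(\P_F)_{AB}$ is a functor over $\Delta$, compatible with the length functor $\le_{AB}$.

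Next I would verify that $\P_F$ is a strict homomorphism. Because concatenation is defined arrow-wise, applying $F$ commutes with it: $F(s\ast t)=Fs\ast Ft$, the length $n+m$ is preserved, and on morphisms the ordinal sum $u+u'$ is preserved as well since $F$ ignores the $\Delta$-parameter. Thus $(\P_F)_{AC}(t\otimes s)=(\P_F)_{BC}(t)\otimes(\P_F)_{AB}(s)$ on the nose, so the comparison $2$-cells $\varphi(A,B,C)$ are identities; likewise $(\P_F)_{AA}([0,A])=[0,FA]$, the strict identity of $FA$, so $\varphi_A$ is the identity. Since both $\P_\C$ and $\P_\D$ are strict $2$-categories and every structure cell of $\P_F$ is an identity, the coherence axioms (M.1) and (M.2) collapse to equalities between identity arrows and hold automatically; hence $\P_F$ is a strict homomorphism.

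Finally, functoriality of $\P_{[-]}:\tx{Cat}_{\leq 1}\to\tx{Bicat}$ is immediate from the arrow-wise description: $\P_{\Id_\C}$ applies the identity chain-wise and so equals $\Id_{\P_\C}$, while $\P_{G\circ F}=\P_G\circ\P_F$ because applying $G\circ F$ to each arrow of a chain coincides with applying $F$ and then $G$, on objects, on chains $[n,s]$, and on the $\Delta$-parameters of morphisms. I expect the only delicate point to be the naturality verification of the central step, together with the bookkeeping across the augmented case $A=B$ (diagrams over $\Delta$, including the $0$-cell $[0,A]$) and the case $A\neq B$ (over $\Delta^+$); one must also note that $F$ need not be injective on objects, so a chain may acquire repeated objects after applying $F$. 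This causes no trouble, since a chain records its arrows rather than merely its objects, and $Ff_i:FA_i\to FA_{i+1}$ is a legitimate arrow of $\D$ even when $FA_i=FA_{i+1}$.
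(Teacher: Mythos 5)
Your proposal is correct and takes essentially the same route as the paper: the Observation is asserted there as immediate from the fact that $F$ preserves composition and identities, and your write-up simply supplies the bookkeeping this hides --- arrow-wise application of $F$ defines a map of cosimplicial sets because cofaces (composition) and codegeneracies (inserted identities) are respected, the Grothendieck construction then yields the components $(\P_F)_{AB}$ over $\Delta$, and strictness plus functoriality follow since $F$ commutes with concatenation and with $[0,A]\mapsto[0,FA]$ on the nose. Your closing remarks on the augmented case $A=B$ versus $A\neq B$ and on non-injectivity of $F$ on objects are sensible details but do not depart from the paper's construction.
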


\begin{rmk}
\begin{itemize}
\item[$\diamondsuit$] There exists another `$2$-path-category' associated to a small category $\C$ which is \underline{not a strict} $2$-category but a bicategory.
This new bicategory $\overline{\P}_{\C}$ is obtained by removing the object $[0,A]$ from each $\P_{\C}(A,A)$.The identity morphism in $\overline{\P}_{\C}(A,A)$ is now the object $\Id_{A}=[1,A \xrightarrow{1_{A}} A]$ and is  \underline{not} a strict identity.

One has the \underline{right  identity} property using the the equalities  $  f \circ 1_{A} =f $ for any arrow $f$ in $\C(A,B)$. In fact we have a \underline{canonical invertible} $2$-cell  for every $1$-morphism $t =[n, A \xrightarrow{f_{1}} A_{1} \to  \cdots \to A_{n-1} \to B]$ in $\overline{\P}_{\C}(A,B)$:

\[
\xy
(20,0)*+{\left(\vcenter{ \xy
(0,13)*+{c(t, \Id_{A})}="A";
(0,0)*+{t}="B";
{\ar@{=>}^{u}"A";"B"};
\endxy } \right)}="W";
\endxy= 
\xy
(20,0)*+{\left(\vcenter{ \xy
(0,13)*+{[n+1,A\xrightarrow{1_{A}}A \xrightarrow{f_{1}} A_{1} \to \cdots \to  \cdots \to A_{n-1} \to B]}="A";
(0,0)*+{[n, A \xrightarrow{f_{1}} A_{1} \to  \cdots \to A_{n-1} \to B]}="B";
{\ar@{=>}^{u}"A";"B"};
\endxy } \right)}="W";
\endxy  
\]

given by the map $u: n+1 \to n$ of $\Delta_{+}$:
\[
\xy
(20,0)*+{\left(\vcenter{ \xy
(0,13)*+{n+1}="A";
(0,0)*+{n}="B";
{\ar@{->}^{u}"A";"B"};
\endxy } \right)}="W";
\endxy= 
\xy
(20,0)*+{\left(\vcenter{ \xy
(0,13)*+{\{0<1<\cdots <n-1<n\}}="A";
(0,0)*+{\{0<1<\cdots <n-1\}}="B";
{\ar@{->}^{u}"A";"B"};
\endxy } \right)}="W";
\endxy 
\]
with

\[
u(0)=u(1)=0,~~u(j)=j-1 ~~ for~~ 2 \leq j \leq n
\]

The inverse of these $2$-cells are given by the map $\overline{u} :n \to n+1$ of $\Delta$ :
\[
\xy
(20,0)*+{\left(\vcenter{ \xy
(0,13)*+{n+1}="A";
(0,0)*+{n}="B";
{\ar@{->}^{\overline{u}}"B";"A"};
\endxy } \right)}="W";
\endxy= 
\xy
(20,0)*+{\left(\vcenter{ \xy
(0,13)*+{\{0<1<\cdots <n-1<n\}}="A";
(0,0)*+{\{0<1<\cdots <n-1\}}="B";
{\ar@{->}^{\overline{u}}"B";"A"};
\endxy } \right)}="W";
\endxy 
\]
with $\overline{u}(j)=j+1$ for all $0 \leq j \leq n-1$.

In the same manner we get the \underline{left identity} property using  the maps $v :n+1 \to n$ and
$\overline{v}:n \to n+1 $of $\Delta$ defined respectively by : 
\[
v(j)=j ~~ for~~ 0 \leq j \leq n-2, ~~ v(n-1)=v(n)=n-1
\]
\[
\overline{v}(j)=j ~~ for~~ 0 \leq j \leq n-1
\]

\end{itemize}
\end{rmk}
\section{Localization and cartesian products}

\begin{nota} In this section we will use the following notations.\\ 
Cat $=$ the category of small categories.\\
$\Hom(\C,\E)=$ category of functors from $\C$ to $\E$.\\
$\L_{\S}: \C \to \C[\S^{-1}]=$ a Gabriel-Zisman localization of $\C$ with respect to a class of maps $\S$.\\
${\L_{\S}}^{*}(\E) = \Hom(\C[\S^{-1}],\E) \xrightarrow{-\circ \L_{\S}} \Hom(\C,\E)$.\\ 
$\Hom_{\S}(\C,\E)=$the full subcategory of $\Hom(\C,\E)$ whose objects are functors which make $\S$ invertible \footnote{We say that $F:\C \to \E$ makes $\S$ invertible if for all $s \in \S$, $F(s)$ is invertible in $\E$.} in $\D$.\\
\end{nota}

\begin{note}
It is well known that every functor making $\S$ invertible, factors in a unique way through $\L_{\S}$, hence ${\L_{\S}}^*$ induces an isomporphism of categories:
$${\L_{\S}}^{*}(\E): \Hom(\C[\S^{-1}],\E) \xrightarrow{\sim} \Hom_{\S}(\C,\E).$$
\end{note}
\textbf{Our goal is to prove the following lemma.}
\begin{lem}\label{loc-prod}
Let $\C$ and $\D$ be two small categories, $\S$ and $\T$ be respectively two class of morphisms of $\C$ and $\D$. Choose  localizations $\L_{\S}: \C \to \C[\S^{-1}]$ and $\L_{\T}: \D \to \D[\T^{-1}]$.

Assume that :
\renewcommand\labelenumi{\roman{enumi})}
\begin{enumerate}
\item[$\bullet$] $\S$ contains all identities of $\C$ 
\item[$\bullet$] $\T$ contains all  identities of $\D$
\end{enumerate}
Then the canonical functor  $$\C \times \D \xrightarrow{\L_{\S} \times \L_{\T}} \C[\S^{-1}] \times \D[\T^{-1}]$$ is a localization of $\C \times \D$ with respect to $\S \times \T$.
\end{lem}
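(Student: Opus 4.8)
The plan is to verify the universal property of Gabriel--Zisman localization recalled in the Note above. Writing $P := \L_{\S} \times \L_{\T}$ and $\S \times \T := \{(s,t) : s \in \S,\ t \in \T\}$, I must show that precomposition with $P$ induces, for every test category $\E$, an isomorphism
\[
P^{*} : \Hom\bigl(\C[\S^{-1}] \times \D[\T^{-1}],\, \E\bigr) \xrightarrow{\ \sim\ } \Hom_{\S \times \T}(\C \times \D,\, \E).
\]
Two facts are immediate. First, since localizations are the identity on objects, $\L_\S$ and $\L_\T$ are bijective on objects, hence so is $P$. Second, $P$ makes $\S\times\T$ invertible: for $(s,t)\in\S\times\T$ the pair $P(s,t)=(\L_\S s,\L_\T t)$ is a pair of invertible arrows, hence invertible in the product. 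Thus $P^*$ is at least well defined, and everything reduces to checking it is bijective.

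The key device is the cartesian closedness of $\mathrm{Cat}$, i.e. the natural isomorphism $\Hom(\C\times\D,\E)\cong\Hom(\C,\Hom(\D,\E))$ sending $F$ to $\tilde F$ with $\tilde F(X)=F(X,-)$. Under this correspondence I claim that $F$ makes $\S\times\T$ invertible if and only if $\tilde F$ lands in the subcategory $\Hom_{\T}(\D,\E)$ and, as a functor $\C\to\Hom_\T(\D,\E)$, makes $\S$ invertible. This is where both hypotheses are used. Any morphism of the product factors as $(s,t)=(s,\Id)\circ(\Id,t)$, so $F(s,t)=F(s,\Id)\circ F(\Id,t)$. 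If $t\in\T$ then $(\Id_X,t)\in\S\times\T$ precisely because $\Id_X\in\S$, forcing $F(X,-)(t)=F(\Id_X,t)$ to be invertible; symmetrically, if $s\in\S$ then $(s,\Id_Y)\in\S\times\T$ because $\Id_Y\in\T$, forcing each component $F(s,\Id_Y)$ of the transformation $F(s,-)\colon F(X,-)\Rightarrow F(X',-)$ to be invertible, i.e. $\tilde F(s)$ a natural isomorphism. The converse runs the same factorization backwards. Hence
\[
\Hom_{\S\times\T}(\C\times\D,\E)\ \cong\ \Hom_{\S}\bigl(\C,\ \Hom_{\T}(\D,\E)\bigr).
\]

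It remains to strip off the two localizations one variable at a time. Applying the universal property of $\L_\T$ (the Note, with target $\E$) gives $\Hom_\T(\D,\E)\cong\Hom(\D[\T^{-1}],\E)$, naturally in $\E$; substituting and then applying the universal property of $\L_\S$ with the new target $\Hom(\D[\T^{-1}],\E)$ yields
\[
\Hom_\S\bigl(\C,\Hom(\D[\T^{-1}],\E)\bigr)\ \cong\ \Hom\bigl(\C[\S^{-1}],\Hom(\D[\T^{-1}],\E)\bigr)\ \cong\ \Hom\bigl(\C[\S^{-1}]\times\D[\T^{-1}],\E\bigr),
\]
the last step being cartesian closedness again. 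Composing all these isomorphisms, one checks that the resulting map is exactly $P^*$, which is therefore an isomorphism; by uniqueness of localizations this identifies $P$ as a localization of $\C\times\D$ at $\S\times\T$.

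I expect the main obstacle to be the central claim of the second paragraph, namely the clean equivalence between ``$F$ inverts $\S\times\T$'' and the two one-variable conditions on $\tilde F$. The factorization $(s,t)=(s,\Id)\circ(\Id,t)$ makes the forward direction painless, but verifying that $\tilde F$ really is a functor $\C\to\Hom_\T(\D,\E)$ (rather than merely an object-indexed family) requires the interchange law in the product category and is where the bookkeeping lives; it is also the only place where the hypotheses $\Id\in\S$ and $\Id\in\T$ enter, so they cannot be dropped. A secondary point of care is confirming that the composite of the displayed isomorphisms is genuinely $P^*$ and not some other functor inducing the same bijection, which follows by tracing an element and using that each intermediate iso is precomposition with the relevant localization functor.
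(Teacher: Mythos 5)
Your proof is correct and follows essentially the same route as the paper's: both rest on the cartesian closedness of $\tx{Cat}$ to curry $F$ and then strip off the two localizations one variable at a time, with the hypotheses $\Id \in \S$ and $\Id \in \T$ entering exactly through the factorization $(s,t)=(s,\Id)\circ(\Id,t)$. The only difference is presentational: you assemble a chain of isomorphisms of $\Hom$-categories (which conveniently treats functors and natural transformations at once), whereas the paper factors an individual functor in two successive steps, handling the second variable by the symmetry of the product with $\S_0$ taken to be the identities of $\C[\S^{-1}]$.
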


\begin{obs}\label{obs-loc-prod} \ \\
From the lemma we have the following consequences.
\begin{enumerate}
\item Any  object $F$ of $\Hom_{\S \times \T}(\C \times \D,\E)$ factors uniquely as $F = \ol{F} \circ (\L_{\S} \times      \L_{\T})$ where  $\ol{F}$ is an object of $\Hom(\C[\S^{-1}] \times \D[\T^{-1}],\E)$.
\item  We have an isomorphism :
$${\L_{\S \times \T}}^{*}(\E): \Hom(\C[\S^{-1}] \D[\T^{-1}] \times ,\E) \xrightarrow{\sim} \Hom_{\S \times \T}(\C \times \D,\E).$$
\item For every pair  $(\E_1,\E_2)$ of categories and any functors $F$ in $\Hom_{\S}(\C,\E_1)$,  $G$ in $\Hom_{\T}(\D,\E_2)$, if we write :\\
$F= \ol{F} \circ \L_{\S}$,\\
$G= \ol{G} \circ \L_{\T}$,\\
then the functor   $F \times G$ is in $\Hom_{\S \times \T}(\C \times \D,\E_1 \times \E_2)$ and factors (uniquely) as:
$$ F \times G = (\ol{F} \times \ol{G}) \circ (\L_{\S} \times \L_{\T}).$$
We therefore have `` $\ol{F \times G} = \ol{F} \times \ol{G}$ ''. 
\end{enumerate}
\end{obs}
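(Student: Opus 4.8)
The plan is to verify directly the universal property that characterises a Gabriel--Zisman localization: that precomposition with $\L_{\S} \times \L_{\T}$ induces, for every category $\E$, an isomorphism
$$(\L_{\S} \times \L_{\T})^{*} : \Hom(\C[\S^{-1}] \times \D[\T^{-1}], \E) \xrightarrow{\sim} \Hom_{\S \times \T}(\C \times \D, \E),$$
where $\S \times \T$ is the class of morphisms $(s,t)$ of $\C \times \D$ with $s \in \S$ and $t \in \T$. First I would note that $\L_{\S} \times \L_{\T}$ really inverts $\S \times \T$, since an isomorphism in a product category is a pair of isomorphisms; this guarantees that $(\L_{\S} \times \L_{\T})^{*}$ takes values in $\Hom_{\S \times \T}$.

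The heart of the argument is a separation-of-variables lemma: a functor $F : \C \times \D \to \E$ lies in $\Hom_{\S \times \T}(\C \times \D, \E)$ if and only if $F(C,-)$ inverts $\T$ for every object $C$ of $\C$ and $F(-,D)$ inverts $\S$ for every object $D$ of $\D$. For the forward direction I use that $\S$ and $\T$ contain all identities: for $t \in \T$ the pair $(\Id_{C}, t)$ lies in $\S \times \T$, so $F(C,-)(t) = F(\Id_{C}, t)$ is invertible, and symmetrically for $\S$. For the converse I factor a general morphism $(s,t) : (C,D) \to (C',D')$ as $(s, \Id_{D'}) \circ (\Id_{C}, t)$ and observe that $F(s,\Id_{D'}) = F(-,D')(s)$ and $F(\Id_{C}, t) = F(C,-)(t)$ are invertible by hypothesis, hence so is their composite $F(s,t)$. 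This equivalence is exactly where the hypothesis on identities is indispensable, and I expect it to be the main point to get right.

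With this in hand I would pass through the cartesian closed structure of $\tx{Cat}$. Currying gives a natural isomorphism $\Hom(\C \times \D, \E) \cong \Hom(\C, \Hom(\D, \E))$ sending $F$ to the functor $C \mapsto F(C,-)$. Under it, the separation lemma says that $F$ inverts $\S \times \T$ precisely when the curried functor takes values in $\Hom_{\T}(\D, \E)$ and moreover inverts $\S$, that is, when it lies in $\Hom_{\S}(\C, \Hom_{\T}(\D, \E))$. Applying the one-variable universal property recalled above (first for $\T$ inside the inner Hom, then for $\S$) yields
$$\Hom_{\S}(\C, \Hom_{\T}(\D, \E)) \cong \Hom(\C[\S^{-1}], \Hom(\D[\T^{-1}], \E)) \cong \Hom(\C[\S^{-1}] \times \D[\T^{-1}], \E),$$
the last step again by currying.

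Finally I would track an element through this chain of isomorphisms to check it is genuinely precomposition with $\L_{\S} \times \L_{\T}$: a functor $G$ on the product of the localizations is sent to $(C,D) \mapsto G(\L_{\S}C, \L_{\T}D)$, i.e.\ to $G \circ (\L_{\S} \times \L_{\T})$. This identifies the composite with $(\L_{\S} \times \L_{\T})^{*}$ and establishes the required universal property, so $\L_{\S} \times \L_{\T}$ is indeed a localization of $\C \times \D$ at $\S \times \T$. The only genuinely delicate steps are the separation lemma and the verification that the currying isomorphisms are natural enough for the final composite to be precomposition by $\L_{\S} \times \L_{\T}$; both become routine once the bookkeeping is set up carefully.
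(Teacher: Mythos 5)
Your argument for parts (1) and (2) is correct, and it takes a genuinely different (if cousin) route from the paper's. The paper proves the underlying lemma by two sequential one-variable factorizations: it curries $F$ to $\alpha(F):\C \to \Hom(\D,\E)$, checks that each component $[\alpha(F)s]_U = F(s,\Id_U)$ is invertible (the same use of identities in $\T$ that you make), uncurries to get an intermediate factorization $F = \ol{F_0}\circ(\L_{\S}\times \Id_{\D})$, and then repeats the step on $\ol{F_0}$ with $\T$, using the symmetry of the product. You instead prove a separation-of-variables characterization of $\Hom_{\S\times\T}(\C\times\D,\E)$ up front and then assemble the universal property as a single chain of isomorphisms $\Hom_{\S\times\T}(\C\times\D,\E)\cong\Hom_{\S}(\C,\Hom_{\T}(\D,\E))\cong\Hom(\C[\S^{-1}],\Hom(\D[\T^{-1}],\E))\cong\Hom(\C[\S^{-1}]\times\D[\T^{-1}],\E)$, localizing both levels inside the internal Hom before uncurrying. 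Both proofs hinge on exactly the same two ingredients (identities belonging to $\S$ and $\T$, and the cartesian closedness of Cat); your version makes the role of the hypothesis more transparent and gives the isomorphism of Hom-categories in one stroke, while the paper's version produces the explicit intermediate functor $\ol{F_0}$, which it reuses notationally later. Your final element-tracking step, showing the composite is precomposition with $\L_{\S}\times\L_{\T}$, is the right check and goes through as you indicate.

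There is, however, a genuine omission: you never address part (3) of the statement, the assertion that for $F\in\Hom_{\S}(\C,\E_1)$ and $G\in\Hom_{\T}(\D,\E_2)$ one has $F\times G\in\Hom_{\S\times\T}(\C\times\D,\E_1\times\E_2)$ and $\ol{F\times G}=\ol{F}\times\ol{G}$. This is not a decorative corollary here: it is the identity the paper leans on repeatedly in Appendix D (in constructing the composition $c_{\W^{-1}\M}$ and verifying the associativity coherence of $\W^{-1}\M$), so a proof of the Observation must include it. Fortunately the fix is short and uses only what you have already established: $(F\times G)(s,t)=(Fs,Gt)$ is a pair of isomorphisms, hence invertible, so $F\times G$ inverts $\S\times\T$; and then
$$(\ol{F}\times\ol{G})\circ(\L_{\S}\times\L_{\T}) = (\ol{F}\circ\L_{\S})\times(\ol{G}\circ\L_{\T}) = F\times G,$$
so the uniqueness of factorization from part (1) forces $\ol{F\times G}=\ol{F}\times\ol{G}$, which is exactly the paper's one-line argument. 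Add this paragraph and your proof is complete.
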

\ \\
For the proof of the lemma we will use the following:
\begin{itemize}
\item $\Hom : \tx{Cat}^{op} \times \tx{Cat} \to \tx{Cat}$ is a bifunctor,
\item Cat is \textbf{symmetric closed} for the cartesian product, 
\item the universal properties of the Gabriel-Zisman localization.
\end{itemize}

\paragraph{\textbf{`Cat is symmetric closed'}} The fact that Cat is symmetric closed means that for every category $\B$ the endofunctor  $-\times \B : \tx{Cat} \to \tx{Cat}$ (and also `$\B\times-$')  has a right adjoint :
$$\Hom(\B,-): \tx{Cat} \to \tx{Cat}.$$ 
\ \\
The adjunction says that the following functor is an isomorphism:
\[
\xy
(-20,10)*+{\alpha: \Hom(\A \times \B,\E)}= "X";
(30,10)*+{\Hom(\A,  \Hom(\B,\E))}="Y";
(-20,0)*+{F:\A \times \B \to \E }= "Z";
(30,0)*+{\alpha(F): \A \to \Hom(\B, \E).}="W";
{\ar@{->}"X";"Y"};
{\ar@{|->}"Z";"W"};
\endxy 
\]

We give an explicit description of $\alpha(F)$ hereafter.\ \\

\paragraph{\textbf{On objects}} For $A$ in $Ob(\A)$, $[\alpha(F)A]$ is the functor $ F(A,-): \B \to  \E $, given by the formula:
\begin{itemize}
\item $[\alpha(F)A]B :=F(A,B)$
\item $[\alpha(F)A](B \xrightarrow{f} B'):= F(Id_A,f)$, 
for every $B$ in $Ob(\B)$ and every arrow $B \xrightarrow{f} B'$ of $\B$.
\end{itemize}
\ \\
To see that  $[\alpha(F)A]$ is indeed a functor it suffices to check that 
$[\alpha(F)A](g\circ f)=[\alpha(F)A](g)\circ [\alpha(F)A](f)$ . \\

But this follows immediately from the functoriality of $F$ :\\ 
\begin{equation*}
\begin{split}
[\alpha(F)A](g\circ f) & :=F(Id_A,g\circ f) \\
& = F(Id_A \circ Id_A,g\circ f) \\
& = F(Id_A,g) \circ F(Id_A,f)\\
& =[\alpha(F)A](g)\circ [\alpha(F)A](f).
\end{split}
\end{equation*}

\paragraph*{\textbf{On morphisms}} For a morphism $A \xrightarrow{h} A'$ of $\A$, $[\alpha(F)h]$ is the natural transformation from $[\alpha(F)A] $ to $[\alpha(F)A']$ whose components are : 
$$[\alpha(F)h]_B := F(A \xrightarrow{h} A', Id_B): F(A,B) \to F(A',B)$$
\ \\

Here gain we need to check that $$ [\alpha(F)h]_{B'} \circ [\alpha(F)A](f) =[\alpha(F)A'](f) \circ [\alpha(F)h]_B$$ for 
$B \xrightarrow{f} B'$.\\

And this is true because we have : 
$$(h,I_{B'})\circ (I_A,f)=(h,f)=(I_{A'},f) \circ(h,I_B))$$
and by applying $F$ we get :
$$F(h,I_{B'})\circ F(I_A,f)  = F(h,f)= F(I_{A'},f) \circ F(h,I_B)),$$
that is $$ [\alpha(F)h]_{B'} \circ [\alpha(F)A](f) =[\alpha(F)A'](f) \circ [\alpha(F)h]_{B} .$$ 

\begin{rmk}
Given a functor $G$ in $\Hom(\A,  \Hom(\B,\E))$, we define $ \alpha^{-1}(G): \A \times \B \to\E $ by:\\ 
$G(A,B):=[GA]B$ on objects\\
$G(f,g):=[Gf]g$  for every  morphism $(f,g)$ of $\A \times \B$.\\
One can immediately check that this defines indeed a functor from $\A \times \B$ to $\E$. It's obvious that $\alpha^{-1}$ is a functor and for every every $F \in \Hom(\A \times \B,\E)$ we have an equality :
$$F= \alpha^{-1}(\alpha(F)).$$
\end{rmk}
\begin{note}
We will always write $\alpha$ and $\alpha^{-1}$ for the functor in the adjunction. 
\end{note}
\begin{proof}[\scshape{Proof of the lemma}] \ \\
\indent Let  $F : \C \times \D \to \E$ be an object of $\Hom_{\S \times \T }(\C \times \D,\E)$.\ \\
We want to show that $F$ factors in a unique way as : $F= \ol{F} \circ ( \L_{\S} \times \L_{\T})$, with $\ol{F}$ in $\Hom(\C[\S^{-1}] \times \D[\T^{-1}],\E)$.\ \\

\paragraph*{\textbf{Step 1}}  Consider $\alpha(F): \C \to \Hom(\D, \E)$ the functor given by the above adjunction. \\

Given $s: A \to A'$ a morphism of $\S$ and $U$ an object of $\D$, we have $(s,\Id_U) \in \S \times \T$ and by assumption $F(s,\Id_U)$ is invertible in $\E$.\ \\

By definition $\alpha(F)s$ is a natural transformation whose component at $U$ is exactly $F(s,\Id_U)$ :
$$[\alpha(F)s]_U := F(s, Id_U): F(A,U) \to F(A',U).$$ 
Then $\alpha(F)s$ is a natural isomorphism which means that $\alpha(F)$ makes $\S$ invertible, hence factors uniquely as 
$\alpha(F)= \ol{\alpha(F)} \circ  \L_{\S}$ with $\ol{\alpha(F)}: \C[\S^{-1}] \to  \Hom(\D,\E)$.\ \\

Now if we apply the inverse `$\alpha^{-1}$' to  both $\ol{\alpha(F)}$ and $\alpha(F)$ it's easy to see that we have the following equality :
$$F= \ol{F_0} \circ ( \L_{\S} \times \Id_{D})$$ 
where $\ol{F_0}=\alpha^{-1}(\ol{\alpha(F)})$ is a functor from $\C[\S^{-1}] \times \D$ to $\E$.\ \\

\paragraph*{\textbf{Step 2}}
It suffices to apply the \textbf{Step 1} to  $\ol{F_0} :\C[\S^{-1}] \times \D \to \E$  with $\S_0 \subseteq \C[\S^{-1}]$,    $\S_0:=\Id(\C[\S^{-1}])$, and interchanging the role of $\T$ and $\S_0$ using the symmetry of the cartesian product in Cat.\ \\
We then have a factorization :
$$\ol{F_0}= \ol{F_1} \circ ( \Id_{\C[\S^{-1}]} \times \L_{\T})$$
with $\ol{F_1} : \C[\S^{-1}] \times \D[\T^{-1}] \to \E $.\ \\

Combining this with the previous equality we have :
\begin{equation*}
\begin{split}
F & = \ol{F_0} \circ ( \L_{\S} \times \Id_{D}) \\
& =[\ol{F_1} \circ ( \Id_{\C[\S^{-1}])} \times \L_{\T}) ] \circ [ \L_{\S} \times \Id_{D}] \\
& = \ol{F_1} \circ [( \Id_{\C[\S^{-1}])} \circ \L_{\S})  \times ( \L_{\T} \circ \Id_{D})] \\
& = \ol{F_1} \circ (\L_{\S} \times \L_{\T})
\end{split}
\end{equation*}
Then $\ol{F}=\ol{F_1}$. 
\end{proof}

\begin{obs}\ \\
\renewcommand\labelenumi{\alph{enumi})}
\begin{enumerate}
\item A consequence of this lemma is that the canonical functor :
$$(\L_{\S} \times \L_{\T})^*(\E) : \Hom(\C[\S^{-1}] \times \D[\T^{-1}],\E) \xrightarrow{-\circ (\L_{\S} \times \L_{\T})} \Hom_{\S \times \T }(\C \times \D,\E) $$ is an isomorphism. 
\item  For the property `$\ol{F} \times \ol{G}= \ol{F \times G}$' it suffices to  write:
$$\ol{F \times G} \circ (\L_{\S} \times \L_{\T})= F \times G =( \ol{F} \circ \L_{\S}) \times (\ol{G} \circ \L_{\T})= (\ol{F} \times \ol{G}) \circ (\L_{\S} \times \L_{\T})$$ the unicity of the factorization forces the equality $\ol{F} \times \ol{G}= \ol{F \times G}$.
\end{enumerate}
\end{obs}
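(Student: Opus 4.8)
The plan is to obtain both statements as formal consequences of Lemma \ref{loc-prod} together with the universal property of the Gabriel--Zisman localization recalled at the start of this appendix, with no need to revisit the currying argument. Recall that ``being a localization'' of $\A$ with respect to a class $\W$ means \emph{precisely} that, for every target $\E$, precomposition induces an isomorphism of categories $\L^*(\E)\colon\Hom(\A[\W^{-1}],\E)\xrightarrow{\sim}\Hom_{\W}(\A,\E)$ onto the functors making $\W$ invertible. For part (a), Lemma \ref{loc-prod} asserts exactly that $\L_{\S}\times\L_{\T}$ is a localization of $\C\times\D$ with respect to $\S\times\T$; applying the universal property above to this localization and this class gives at once that
$$(\L_{\S}\times\L_{\T})^*(\E)\colon\Hom(\C[\S^{-1}]\times\D[\T^{-1}],\E)\to\Hom_{\S\times\T}(\C\times\D,\E)$$
is an isomorphism. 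The only bookkeeping point is that a morphism of $\C\times\D$ is a pair $(s,t)$, sent by $\L_{\S}\times\L_{\T}$ to $(\L_{\S}s,\L_{\T}t)$, so the inverted class identified by the lemma is indeed $\S\times\T$.

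For part (b), fix $F\in\Hom_{\S}(\C,\E_1)$ and $G\in\Hom_{\T}(\D,\E_2)$ with their unique factorizations $F=\ol{F}\circ\L_{\S}$ and $G=\ol{G}\circ\L_{\T}$. First I would check that $F\times G$ lies in $\Hom_{\S\times\T}(\C\times\D,\E_1\times\E_2)$: for $(s,t)\in\S\times\T$ one has $(F\times G)(s,t)=(F(s),G(t))$, a pair of isomorphisms and hence an isomorphism of $\E_1\times\E_2$. Taking $\E=\E_1\times\E_2$ in part (a) then yields a \emph{unique} factorization $F\times G=\ol{F\times G}\circ(\L_{\S}\times\L_{\T})$. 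To identify $\ol{F\times G}$, I would exhibit $\ol{F}\times\ol{G}$ as a competing factorization: by the interchange law $(\phi\times\psi)\circ(\mu\times\nu)=(\phi\circ\mu)\times(\psi\circ\nu)$ for cartesian products of functors,
$$(\ol{F}\times\ol{G})\circ(\L_{\S}\times\L_{\T})=(\ol{F}\circ\L_{\S})\times(\ol{G}\circ\L_{\T})=F\times G.$$
Since part (a) makes the factorization through $\L_{\S}\times\L_{\T}$ unique, this forces $\ol{F\times G}=\ol{F}\times\ol{G}$, which is the desired identity.

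There is no genuine obstacle here; the whole argument is a direct deduction from the lemma and the defining universal property. The one place calling for care is to invoke part (a) with the \emph{correct} target category $\E_1\times\E_2$ (rather than a single $\E$), so that its uniqueness clause can be used in part (b), and to keep straight that the class inverted is $\S\times\T$ and that the interchange law is what converts the product of the two separate factorizations into a factorization of the product functor.
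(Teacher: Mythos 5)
Your argument is correct and coincides with the paper's own: part (a) is read off from Lemma \ref{loc-prod} together with the universal property of Gabriel--Zisman localization (an isomorphism $\Hom(\C[\S^{-1}],\E)\xrightarrow{\sim}\Hom_{\S}(\C,\E)$) applied to the product localization, and part (b) uses the interchange law to display $(\ol{F}\times\ol{G})\circ(\L_{\S}\times\L_{\T})=F\times G$ and then the uniqueness of the factorization, exactly as in the text. Your explicit check that $F\times G$ inverts $\S\times\T$ and your remark about taking the target to be $\E_1\times\E_2$ are small points the paper leaves implicit, but the route is the same.
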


\section{Secondary Localization of a bicategory} \label{loc-bicat}
In this section we're going to define the Gabriel-Zisman locatization of $\M$ with respect to $\W$ when $(\M,\W)$ is a base of enrichment.

We will use the following notations.\\
$\M_{UV}=\M(U,V)$.\\
$\W_{UV}=\W \bigcap \M(U,V)$.\\
$ c_{\M}(U,V,W): \M_{VW} \times \M_{UV} \to \M_{UW}$= the composition functor in $\M$.\\
$\Hom(\C,\E)=$ category of functors from $\C$ to $\E$.\\
$\L_{\S}: \C \to \C[\S^{-1}]=$ a Gabriel-Zisman localization of $\C$ with respect to a class of maps $\S$.\\
${\L_{\S}}^{*}(\E) = \Hom(\C[\S^{-1}],\E) \xrightarrow{-\circ \L_{\S}} \Hom(\C,\E)$.\\ 
$\Hom_{\S}(\C,\E)=$the full subcategory of $\Hom(\C,\E)$ whose objects are functors which make $\S$ invertible \footnote{We say that $F:\C \to \D$ makes $\S$ invertible if for all $s \in \S$, $F(s)$ is invertible in $\E$.} in $\E$.\\

\begin{rmk}
From the assumptions made on $\W$, we clearly see that each $\W_{UV}$ is a subcategory of $\M_{UV}$ having the same objects. Moreover the functor $c_{\M}(U,V,W)$ sends $\W_{VW} \times \W_{UV}$ to $\W_{UW}$, so we can view $\W$ as a sub-bicategory $\M$, having the same objects and $1$-cells.
\end{rmk}
\begin{df}
Let $\B$ be a bicategory and $ \Phi : \M \to \B$ a homomorphism in the sense of Bénabou \cite{Ben2}. We will say that $\Phi$  makes $\W$ invertible if for every pair $(U,V)$ in $ob(\M)$, the functor $$ \Phi_{UV} : \M(U,V) \to \B(\Phi U,\Phi V)$$ makes $\W_{UV}$ invertible.
\end{df}

Our purpose is to construct a bicategory $\W^{-1}\M$ with a homomorphism $\L_{\W} : \M \to \W^{-1}\M$ which is ``universal" among those making $\W$ invertible. The universality here means that for any homomorphism $ \Phi : \M \to \B$ making $\W$ invertible we have a factorization, unique up-to a  transformation\footnote{the transformation is unique up to a \textbf{unique} modification},   $\Phi = \ol{\Phi} \circ \L_{\W}$, where\\
$\ol{\Phi}: \W^{-1}\M \to \B$ is a homomorphism.\

Like in the classical case the target bicategory $\W^{-1}\M$ should  (essentially) have the same object as $\M$,  so that   $\L_{\W}$ will be the identity on objects. Moreover if such localization homomorphism $\L_{\W}$vexists, we should have  factorizations of its components: 
$$ \L_{\W,UV}:\M_{UV} \xrightarrow{\L_{\W_{UV}}} \M_{UV}[\W_{UV}^{-1}] \xrightarrow{\ol{\L_{\W,UV}}} \W^{-1}\M(\L_{\W}U ,\L_{\W}V).$$\
This suggests to take $\M_{UV}[\W_{UV}^{-1}]$ as category of morphisms in $\W^{-1}\M$ for each $(U,V)$.
\begin{prop}
Let $(\M,\W)$ be a base of enrichment. There exists a bicategory  $\W^{-1}\M$ together with a homomorphism $\L_{\W} : \M \to \W^{-1}\M$ such that :
\renewcommand\labelenumi{\alph{enumi})}
\begin{enumerate}
\item $\L_{\W}$ makes $\W$ invertible,
\item any homomorphism $ \Phi : \M \to \B$  which makes $\W$ invertible factor as  $\Phi = \ol{\Phi} \circ \L_{W}$ with $$\ol{\Phi}: \W^{-1}\M \to \B$$ a homomorphism.
\item $\W^{-1}\M$ is unique up to a biequivalence \footnote{The biequivalence is itself unique up to a unique strong transformation.}. 
\end{enumerate}
\end{prop}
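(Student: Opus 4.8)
The plan is to build $\W^{-1}\M$ one hom-category at a time, localizing each $\M_{UV}$ at $\W_{UV}$ in the ordinary Gabriel--Zisman sense and then transporting the whole bicategorical structure of $\M$ through these localizations. First I would put $Ob(\W^{-1}\M):=Ob(\M)$, declare the category of $1$- and $2$-cells from $U$ to $V$ to be the localization $\M_{UV}[\W_{UV}^{-1}]$, write $\L_{\W,UV}:=\L_{\W_{UV}}:\M_{UV}\to \M_{UV}[\W_{UV}^{-1}]$ for the localization functors, and take the identity $1$-cell at $U$ to be $\L_{\W_{UU}}(I_U)$. Note that by axiom (i) of a base each $\W_{UV}$ contains all $2$-identities, hence all identity morphisms of the category $\M_{UV}$, so the Gabriel--Zisman localizations behave well and the hypotheses of Lemma \ref{loc-prod} will be available throughout.

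The crux is the composition functor, and here Lemma \ref{loc-prod} is exactly the tool needed. Consider
$$\M_{VW}\times\M_{UV}\xrightarrow{c_{\M}(U,V,W)}\M_{UW}\xrightarrow{\L_{\W_{UW}}}\M_{UW}[\W_{UW}^{-1}].$$
Since $\W$ is stable under horizontal composition, $c_{\M}(U,V,W)$ carries $\W_{VW}\times\W_{UV}$ into $\W_{UW}$, and as $\L_{\W_{UW}}$ inverts $\W_{UW}$ the composite makes $\W_{VW}\times\W_{UV}$ invertible. By Lemma \ref{loc-prod}, applied with $\S=\W_{VW}$ and $\T=\W_{UV}$, the functor $\L_{\W_{VW}}\times\L_{\W_{UV}}$ is a localization at $\W_{VW}\times\W_{UV}$, so the composite factors uniquely through a functor
$$\ol{c}(U,V,W):\M_{VW}[\W_{VW}^{-1}]\times\M_{UV}[\W_{UV}^{-1}]\to\M_{UW}[\W_{UW}^{-1}],$$
which I take as the composition of $\W^{-1}\M$. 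By construction the identity $\ol{c}(U,V,W)\circ(\L_{\W_{VW}}\times\L_{\W_{UV}})=\L_{\W_{UW}}\circ c_{\M}(U,V,W)$ holds strictly, so $\L_{\W}$ is a \emph{strict} homomorphism (its structure $2$-cells are identities), and assertion (a) is immediate since each $\L_{\W_{UV}}$ inverts $\W_{UV}$ by definition.

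For the constraints, the associator $a(U,V,W,X)$ and the unit isomorphisms $l(U,V),r(U,V)$ of $\M$ are natural isomorphisms; any functor preserves isomorphisms, so their images under the $\L_{\W}$'s are invertible $2$-cells in $\W^{-1}\M$. The real work is to see that they \emph{descend} to the localized hom-categories and assemble into genuine natural isomorphisms, and this is where I would lean on the uniqueness half of Lemma \ref{loc-prod} together with Observation \ref{obs-loc-prod} (in particular $\ol{F\times G}=\ol{F}\times\ol{G}$): both composite functors bounding the associativity square, and both sides of each coherence equation (A.C.) and (I.C.), agree already in $\M$, hence agree after precomposition with the relevant product of localization functors, so uniqueness of the factorization through the product localization forces them equal in $\W^{-1}\M$. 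I expect this coherence bookkeeping to be the main obstacle: not a hard idea, but the careful organization of (A.C.), (I.C.) (and below, (M.1), (M.2)) so that each collapses cleanly to the uniqueness clause of Lemma \ref{loc-prod} for a triple or quadruple product of hom-categories.

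Assertion (b) is proved the same way, one level up. Given $\Phi:\M\to\B$ making $\W$ invertible, set $\ol{\Phi}(U):=\Phi(U)$; each $\Phi_{UV}$ inverts $\W_{UV}$ and hence factors uniquely as $\ol{\Phi}_{UV}\circ\L_{\W_{UV}}$ with $\ol{\Phi}_{UV}:\M_{UV}[\W_{UV}^{-1}]\to\B(\Phi U,\Phi V)$. The structure $2$-cells $\psi(U,V,W),\psi_U$ of $\Phi$ descend to those of $\ol{\Phi}$, and the axioms (M.1), (M.2) for $\ol{\Phi}$ follow from those for $\Phi$, again by the uniqueness of factorizations; the equality $\Phi=\ol{\Phi}\circ\L_{\W}$ is then built in, and uniqueness of $\ol{\Phi}$ up to a unique transformation is the standard consequence. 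Finally, (c) is formal: any two bicategories carrying a homomorphism with the universal property (b) are biequivalent via the mutually inverse comparison homomorphisms, themselves unique up to unique transformation. The only non-categorical caveat is set-theoretic, namely that $\M_{UV}[\W_{UV}^{-1}]$ need not be locally small; this is handled by enlarging the Grothendieck universe, exactly as elsewhere in the paper.
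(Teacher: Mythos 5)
Your construction is the same as the paper's: keep the objects of $\M$, take $\W^{-1}\M(U,V)=\M_{UV}[\W_{UV}^{-1}]$, obtain the composition by factoring $\L_{\W_{UW}}\circ c_{\M}(U,V,W)$ through the product localization of Lemma \ref{loc-prod} (legitimate, as you note, because $\W$ contains all invertible and hence all identity $2$-cells), transfer the coherence axioms using $\ol{F\times G}=\ol{F}\times\ol{G}$ and uniqueness of factorizations, and get the universal property by factoring each component $\Phi_{UV}$ through $\L_{\W_{UV}}$. In outline this reproduces the paper's proof, including the central role of Lemma \ref{loc-prod}, and your treatment of part (b) is if anything more explicit than the paper's.

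There is, however, one step where your argument as written would fail. You assert that ``both composite functors bounding the associativity square agree already in $\M$'' and propose to descend the associator by the uniqueness clause of Lemma \ref{loc-prod}. In a general bicategory the two composites $c_{\M}(U,W,Z)\circ(\Id_{\M_{WZ}}\times c_{\M}(U,V,W))$ and $c_{\M}(U,V,Z)\circ(c_{\M}(V,W,Z)\times\Id_{\M_{UV}})$ are \emph{not} equal --- they are only isomorphic via $a(U,V,W,Z)$ --- so uniqueness of factorizations of \emph{functors} produces nothing here; your mechanism would only yield the associator when $\M$ is a strict $2$-category, whereas the proposition concerns arbitrary bases. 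What is needed, and what the paper actually uses, is the $2$-dimensional half of the localization's universal property: by Observation \ref{obs-loc-prod}, precomposition with $\L_{\W_{WZ}}\times\L_{\W_{VW}}\times\L_{\W_{UV}}$ is an \emph{isomorphism of categories}, in particular fully faithful on natural transformations, so the whiskered associator $a(U,V,W,Z)\otimes\Id_{\L_{\W_{UZ}}}$ has a unique preimage $\ol{a(U,V,W,Z)}$ between the two descended composites ($\sigma_2$ and $\sigma_1$ in the paper's notation), and invertibility is preserved. The same correction applies to descending $l(U,V)$, $r(U,V)$, and, in part (b), the structure cells $\psi(U,V,W)$ of $\Phi$, which are likewise transformations between two \emph{distinct} functors. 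Your uniqueness/faithfulness argument is correct precisely where both sides are genuinely equal $2$-cells --- the pentagon (A.C.), the identity coherence (I.C.), and the axioms (M.1), (M.2) --- and there it coincides with the paper's transfer argument.
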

\begin{proof}[\scshape{Proof of the proposition}]\ \\
\indent Choose a localization $\L_{\W_{UV}}: \M_{UV} \to \M_{UV}[\W_{UV}^{-1}]$ for each pair $(U,V)$ of objects of $\M$.\ \\
Set $Ob(\W^{-1}\M) = Ob(\M)$,\\
$\W^{-1}\M(U,V)= \M_{UV}[\W_{UV}^{-1}]$.\ \\

\paragraph*{\textbf{For the composition}} By applying lemma \ref{loc-prod} for each triple $(U,V,W)$, we have a localization  
$$ \M_{VW} \times \M_{UV} \xrightarrow{\L_{\W_{VW}} \times \L_{\W_{UV}}} \M_{VW}[\W_{VW}^{-1}] \times \M_{UV}[\W_{UV}^{-1}]$$ of $\M_{VW} \times \M_{UV}$ with respect to $\W_{VW} \times \W_{UV}$.\ \\

Since $ c_{\M}(U,V,W): \M_{VW} \times \M_{UV} \to \M_{UW}$ sends $\W_{VW} \times \W_{UV}$ to $\W_{UW}$, it  follows that the composite $$\L_{\W_{UW}} \circ c_{\M}(U,V,W) :\M_{VW} \times \M_{UV} \to \M_{UW}[\W_{UW}^{-1}]$$ makes $\W_{VW} \times \W_{UV}$ invertible, hence factors as :
\[
\xy
(-20,0)*+{\M_{VW} \times \M_{UV}}="X";
(25,0)*+{\M_{UW}[\W_{UW}^{-1}]}="Y";
(-20,-20)*+{\M_{VW}[\W_{VW}^{-1}] \times \M_{UV}[\W_{UV}^{-1}]}="E";
{\ar@{->}^{\L_{\W_{UW}} \circ c_{\M}(U,V,W)}"X";"Y"};
{\ar@{->}_{\L_{\W_{VW}} \times \L_{\W_{UV}}}"X";"E"};
{\ar@{-->}_{~~~c_{\W^{-1}\M}(U,V,W)}"E";"Y"};
\endxy
\]
which gives the composition functor.\ \\ 

If we follow the notations of the factorization as in lemma \ref{loc-prod} we will write:
$$ \L_{\W_{UW}} \circ c_{\M}(U,V,W)=\ol{\L_{\W_{UW}} \circ c_{\M}(U,V,W)} \circ (\L_{\W_{VW}} \times \L_{\W_{UV}})$$ which means that $c_{\W^{-1}\M}(U,V,W) := \ol{\L_{\W_{UW}} \circ c_{\M}(U,V,W)}$.\ \\

\paragraph*{\textbf{For the associativity}} We build the following commutative diagram using the universal property of the Gabriel-Zisman localization and lemma \ref{loc-prod}. 

\[
\xy
(-60,20)*+{\M_{WZ} \times \M_{VW} \times \M_{UV}}="A";
(20,30)*+{\M_{WZ} \times \M_{UW}}="B";
(-20,-10)*+{\M_{VZ} \times \M_{UV}}="C";
(60,0)*+{\M_{UZ}}="D";
(-10,5)="m";
(10,15)="n";
{\ar@{=>}^{a(U,V,W,Z)~~}"m";"n"};
(-10,-45)="r";
(10,-35)="q";
{\ar@{=>}^{\ol{a(U,V,W,Z)}~~}"r";"q"};
{\ar@{->}^{\Id_{\M_{WZ}} \times c_{\M}(U,V,W)}"A";"B"};
{\ar@{->}_{c_{\M}(V,W,Z) \times \Id_{\M_{UV}}}"A";"C"};
{\ar@{->}^{c_{\M}(U,W,Z)}"B";"D"};
{\ar@{->}_{c_{\M}(U,V,Z)}"C";"D"};
(-60,-30)*+{\M_{WZ}[\W_{WZ}^{-1}] \times \M_{VW}[\W_{VW}^{-1}] \times \M_{UV}[\W_{UV}^{-1}]}="X";
(20,-20)*+{\M_{WZ}[\W_{WZ}^{-1}] \times \M_{UW}[\W_{UW}^{-1}]}="Y";
(-20,-60)*+{\M_{VZ}[\W_{VZ}^{-1}] \times \M_{UV}[\W_{UV}^{-1}]}="Z";
(60,-50)*+{\M_{UZ}[\W_{UZ}^{-1}]}="W";
{\ar@{-->}^{\gamma_1~~~~~~~~}"X";"Y"};
{\ar@{:>}_{\L_{\W_{WZ}} \times \L_{\W_{VW}} \times \L_{\W_{UV}}}"A";"X"};
{\ar@{->}^{c_{\W^{-1}\M}(U,W,Z)}"Y";"W"};
{\ar@{:>}^{}"C";"Z"};
{\ar@{:>}^{}"B";"Y"};
{\ar@{:>}^{\L_{\W_{UZ}}}"D";"W"};
{\ar@{-->}_{\gamma_2}"X";"Z"};
{\ar@{->}_{c_{\W^{-1}\M}(U,V,Z)}"Z";"W"};
{\ar@/_2.3pc/^{\eta_2}"A"; "D"};
{\ar@/^2.3pc/^{\eta_1}"A"; "D"};
{\ar@/_2.3pc/^{\sigma_2}"X"; "W"};
{\ar@/^2.3pc/^{\sigma_1}"X"; "W"};
\endxy
\]

\ \\

We use hereafter the same notations as in lemma \ref{loc-prod}. Then for every functor $F$ which factor through a localization $\L$, we will denote by $\ol{F}$ the unique functor such that $F= \ol{F} \circ \L$.

\begin{itemize}
\item The double dotted vertical maps are localizations given by lemma \ref{loc-prod}.
\item We've denoted for short $\eta_1=c_{\M}(U,W,Z)\circ [\Id_{\M_{WZ}} \times c_{\M}(U,V,W)].$\ \\

\item Similary $\eta_2=[ c_{\M}(V,W,Z)\times \Id_{\M_{UV}}] \circ c_{\M}(U,V,Z)$\ \\

\item $\gamma_1$ is by definition $\ol{\L_{\W_{WZ}} \times (\L_{\W_{UW}} \circ [\Id_{\M_{WZ}} \times c_{\M}(U,V,W)]}$ and is given  universal property with respect to  $\L_{\W_{WZ}} \times \L_{\W_{VW}} \times \L_{\W_{UV}}$.\ \\
 
\item $\gamma_2$ is  $\ol{(\L_{\W_{VZ}} \times \L_{\W_{UV}}) \circ [c_{\M}(V,W,Z) \times \Id_{\M_{UV}}]}$ \ \\

\item $\sigma_1= \ol{\L_{\W_{UZ}} \circ \eta_1}$ \ \\

\item $\sigma_2= \ol{\L_{\W_{UZ}} \circ \eta_2}$ \ \\
\item $\ol{a(U,V,W,Z)}$ is the inverse image of $a(U,V,W,Z) \otimes \Id_{\L_{\W_{UZ}}}$, which is an invertible the $2$-cell in Cat, by the isomorphism of categories $[\L_{\W_{WZ}} \times \L_{\W_{VW}} \times \L_{\W_{UV}}]^{\ast}(\M_{UZ}[\W_{UZ}^{-1}])$.\ \\

Recall that $[\L_{\W_{WZ}} \times \L_{\W_{VW}} \times \L_{\W_{UV}}]^{\ast}(\M_{UZ}[\W_{UZ}^{-1}])$ is an isomorphism from the hom-category  $$\Hom( \M_{WZ}[\W_{WZ}^{-1}] \times \M_{VW}[\W_{VW}^{-1}] \times \M_{UV}[\W_{UV}^{-1}],\M_{UZ}[\W_{UZ}^{-1}])$$ to the hom-category $$\Hom_{\W_{WZ} \times \W_{VW} \times \W_{UV}}(\M_{WZ} \times \M_{VW} \times \M_{UV},\M_{UZ}[\W_{UZ}^{-1}]).$$

It's clear that $\ol{a(U,V,W,Z)}$ is an invertible $2$-cell in Cat (a natural isomorphism) from $\sigma_2$ to $\sigma_1$
\end{itemize}
\ \\

\textbf{We need to show that the following hold}.
\begin{itemize}
\item $\gamma_1= Id_{\M_{WZ}[\W_{WZ}^{-1}]} \times c_{\W^{-1}\M}(U,V,W)$ \ \\

\item $\gamma_2= c_{\W^{-1}\M}(V,W,Z) \times Id_{\M_{UV}[\W_{UV}^{-1}]}$ \ \\

\item $\sigma_1= c_{\W^{-1}\M}(U,W,Z) \circ \gamma_1$ \ \\

\item $\sigma_2= c_{\W^{-1}\M}(U,V,Z) \circ \gamma_2$
\end{itemize}
\ \\
We proof the equality for $\gamma_1$ and $\sigma_1$, the argument is the same for the remaining cases. \ \\

For $\gamma_1$ we use the the property `$\ol{F \times G}= \ol{F} \times \ol{G}$' (see  Observations \ref{obs-loc-prod}). We have 
\begin{equation*}
\begin{split}
\gamma_1 & = \ol{\L_{\W_{WZ}} \times (\L_{\W_{UW}} \circ c_{\M}(U,V,W))}\\
         & =\ol{\L_{\W_{WZ}}} \times  \ol{\L_{\W_{UW}} \circ c_{\M}(U,V,W)} \\
\end{split}
\end{equation*}

But since $\L_{\W_{WZ}}= Id_{\M_{WZ}[\W_{WZ}^{-1}]} \circ  \L_{\W_{WZ}}$ then $\ol{\L_{\W_{WZ}}}=Id_{\M_{WZ[\W_{WZ}^{-1}]}}$.\ \\

Combining with the fact that $c_{\W^{-1}\M}(U,V,W) := \ol{\L_{\W_{UW}} \circ c_{\M}(U,V,W)}$, we deduce that 
$$\gamma_1= Id_{\M_{WZ}[\W_{WZ}^{-1}]} \times c_{\W^{-1}\M}(U,V,W)$$ as desired.\ \\

For $\sigma_1$ we're going to use the commutativity of the vertical faces in the `cubical' diagram and the fact that Cat is as strict $2$-category.\ \\

We write 
\begin{equation*}
\begin{split}
[c_{\W^{-1}\M}(U,W,Z) \circ \gamma_1] \circ (\L_{\W_{WZ}} \times \L_{\W_{VW}} \times \L_{\W_{UV}})& = c_{\W^{-1}\M}(U,W,Z)\circ [\gamma_1 \circ (\L_{\W_{WZ}} \times \L_{\W_{VW}} \times \L_{\W_{UV}})]\\
&=c_{\W^{-1}\M}(U,W,Z) \circ [ (\L_{\W_{WZ}} \times \L_{\W_{UV}}) \circ (\Id_{\M_{WZ}} \times c_{\M}(U,V,W))] \\
&=[c_{\W^{-1}\M}(U,W,Z) \circ  (\L_{\W_{WZ}} \times \L_{\W_{UV}})] \circ [\Id_{\M_{WZ}} \times c_{\M}(U,V,W)]\\
&=[\L_{\W_{UZ}} \circ c_{\M}(U,W,Z)] \circ [\Id_{\M_{WZ}} \times c_{\M}(U,V,W)]\\
&=\L_{\W_{UZ}} \circ [c_{\M}(U,W,Z) \circ (\Id_{\M_{WZ}} \times c_{\M}(U,V,W))]\\
&=\L_{\W_{UZ}} \circ \eta_1 \\
&=\sigma_1 \circ (\L_{\W_{WZ}} \times \L_{\W_{VW}} \times \L_{\W_{UV}}). 
\end{split}
\end{equation*}

The unicity of the factorization implies :
$\sigma_1= c_{\W^{-1}\M}(U,W,Z) \circ \gamma_1$.\ \\

\paragraph*{\textbf{For the axioms in $\W^{-1}\M$}} We give hereafter the argument for the associativity coherence. The argument is the same for the identity axioms.\ \\ 

The idea is to say that these axioms are satisfied in $\M$ and we need to check that they're transfered through the localization and this is true. The reason is that the property `$\ol{F \times G}= \ol{F} \times \ol{G}$' of functors hold also for natural transformations and commute with the composition. \ \\

For every objects T, U, V, W of $\M$, the pentagon of associativity from $\M_{WZ} \times \M_{VW} \times \M_{UV} \times \M_{TU}$  to $\M_{TZ}$ gives by composition with $\L_{\W_{TZ}}$ a commutative pentagon from 
$\M_{WZ} \times \M_{VW} \times \M_{UV} \times \M_{TU}$ to $\M_{TZ}[\W_{TZ}^{-1}]$. \ \\

For each vertex other than $\M_{TZ}[\W_{TZ}^{-1}]$, each `path' from the vertex to  $\M_{WZ}[\W_{WZ}^{-1}]$ factors through the suitable localization functor. \ \\

These factorizations fit together because we have
\begin{itemize}
\item a unicity of the factorization of the path from $\M_{WZ} \times \M_{VW} \times \M_{UV} \times \M_{TU}$ with respect to the localization $\L_{\W_{WZ}} \times \L_{\W_{VW}} \times \L_{\W_{UV}} \times \L_{\W_{TU}}$.
\item for every triple of objects, we have a cubical commutative diagram. 
\end{itemize}  
\ \\
We finally have a pentagon of associativity from $\M_{WZ}[\W_{WZ}^{-1}] \times \M_{VW}[\W_{VW}^{-1}] \times \M_{UV}[\W_{UV}^{-1}] \times \M_{TY}[\W_{TU}^{-1}]$ to $\M_{TZ}[\W_{TZ}^{-1}]$ as desired. \ \\

Finally one easily check that these data define a bicategory $\W^{-1}\M$ with a canonical homomorphism  $\L_{\W}: \M \to \W^{-1}\M$, and that  $\L_{\W}$ satisfies the universal property. 
\end{proof}

\nocite{*}

\bibliographystyle{plain}
\bibliography{biblio_these} 

\end{document}